\newcommand\blfootnote[1]{%
  \begingroup
  \renewcommand\thefootnote{}\footnote{#1}%
  \addtocounter{footnote}{-1}%
  \endgroup
}
\newcommand{\supp}{\operatorname{supp}}
\newcommand{\cA}{{\mathcal A}}
\newcommand{\cC}{{\mathcal C}}
\newcommand{\cB}{{\mathcal B}}
\newcommand{\cE}{{\mathcal E}}
\newcommand{\cF}{{\mathcal F}}
\newcommand{\cG}{{\mathcal G}}
\newcommand{\cH}{{\mathcal H}}
\newcommand{\cZ}{{\mathcal Z}}
\newcommand{\cP}{{\mathcal P}}
\newcommand{\cL}{{\mathcal L}}
\newcommand{\cM}{{\mathcal M}}
\newcommand{\cS}{{\mathcal S}}
\newcommand{\cU}{{\mathcal U}}
\newcommand{\N}{{\mathbbm N}}
\newcommand{\Z}{{\mathbbm Z}}
\newcommand{\1}{{\mathbbm 1}}
\newcommand{\inn}{\operatorname{int}}
\newcommand{\lcm}{\operatorname{lcm}}
\newcommand{\card}{\operatorname{card}}
\newcommand{\MWG}{\cM^{\scriptscriptstyle G}_{\scriptscriptstyle W}}
\newcommand{\oneone}{1-1\xspace}
\newcommand{\ddelta}{\boldsymbol{\delta}}
\newcommand{\prim}[1]{#1^{prim}}
\newcommand{\Ainf}{\cA_\infty}
\renewcommand{\mod}{\text{ mod }}
\newtheorem {definition}{Definition}[section]
\newtheorem {lemma}{Lemma}[section]
\newtheorem{theorem}{Theorem}
\renewcommand{\thetheorem}{\Alph{theorem}}
\newtheorem {bemerkung}{Remark}[section]
\newtheorem{proposition}{Proposition}[section]
\newtheorem {corollary}{Corollary}[section]
\newtheorem{beispiel}{Example}[section]
\newtheorem{frage}{Question}[section]
\newenvironment{question} {\begin{frage} \normalfont }{\end{frage}}
\newenvironment{remark} {\begin{bemerkung} \normalfont }{\end{bemerkung}}
\newenvironment{example} {\begin{beispiel} \normalfont }{\end{beispiel}}
\newcommand{\myitem}{\item[(\thetheorem\stepcounter{enumi}\theenumi)]}
\def\mapr#1#2{\smash{\mathop{\longrightarrow}\limits^{#1}_{#2}}}
\def\mapdown#1{\Big\downarrow\rlap{$\vcenter{\hbox{$\scriptstyle#1$}}$}}
\begin{document}

\title{Dynamics of $\cB$-free sets: a view through the window}
\author[1]{Stanis\l aw Kasjan$^{\ast\dagger}$}
\author[2]{Gerhard Keller$^\dagger$}
\author[1]{Mariusz Lema\'nczyk\thanks{Research supported by Narodowe Centrum Nauki  UMO-2014/15/B/ST1/03736.}\thanks{Research supported by the special program  of the semester
``Ergodic Theory and Dynamical Sytems in their Interactions with Arithmetic and Combinatorics'', Chair Jean Morlet, 1.08.2016-30.01.2017.}}
\affil[1]{\small Faculty of Mathematics and Computer Science, Nicolaus Copernicus University, Toru\'n, Poland}
\affil[2]{Department of Mathematics, University of Erlangen-N\"urnberg, Germany}
\date{Version of \today}


\maketitle

{\begin{abstract}
Let $\cB$ be an infinite subset of $\{1,2,\dots\}$.
We characterize arithmetic and dynamical properties of the $\cB$-free set $\cF_\cB$ through
group theoretical, topological and measure theoretic properties of a set $W$ (called the \emph{window}) associated with $\cB$. This point of view stems from the interpretation of the set $\cF_\cB$ as a weak model set. Our main results are:  $\cB$ is taut if and only if the window is Haar regular; the dynamical system associated to $\cF_\cB$ is a Toeplitz system if and only if the window is topologically regular; the dynamical system associated to $\cF_\cB$ is proximal if and only if the window has empty interior; and the dynamical system associated to $\cF_\cB$ has the ``na\"ively expected'' maximal equicontinuous factor if and only if the interior of the window is aperiodic.
\end{abstract}}
\blfootnote{\emph{MSC 2010 clasification:} 37A35, 37A45, 37B05.}
\blfootnote{\emph{Keywords:} $\cB$-free dynamics, sets of multiples, maximal equicontinuous factor.}

\section{Introduction and main results}
For any given set $\cB\subseteq\N=\{1,2,\dots\}$ one can define its \emph{set of multiples}
\begin{equation*}
\cM_\cB:=\bigcup_{b\in\cB}b\Z
\end{equation*}
and the set of \emph{$\cB$-free numbers}
\begin{equation*}
\cF_\cB:=\Z\setminus\cM_\cB\ .
\end{equation*}
The investigation of structural properties of $\cM_\cB$ or, equivalently, of $\cF_\cB$ has a long history (see the monograph \cite{hall-book} and the recent paper \cite{BKKL2015} for references), and dynamical systems theory provides some useful tools for this. Namely, denote by $\eta\in\{0,1\}^\Z$ the characteristic function of $\cF_\cB$, i.e. $\eta(n)=1$ if and only if $n\in\cF_\cB$, and consider the
orbit closure $X_\eta$ of $\eta$ in the shift dynamical system
$(\{0,1\}^\Z,\sigma)$, where $\sigma$ stands for the left shift. Then topological dynamics and ergodic theory provide a wealth of concepts to describe various aspects of the structure of $\eta$, see \cite{Sa} which originated
 this point of view by studying the set of square-free numbers, and also
 \cite{Ab-Le-Ru}, \cite{BKKL2015}  which continued this line of research. 
 
 In this paper we continue to provide a dictionary that characterizes arithmetic properties of $\cB$ in terms of dynamical properties of $X_\eta$, and, as an intermediate step, also in terms of topological and measure theoretic properties of a pair $(H,W)$ associated with the passage from $\cB$ to
$X_\eta$, where $H$ is a compact abelian group and $W$ a compact subset of $H$. This latter point of view is borrowed from the theory of weak model sets, which applies here, because $\cF_\cB$ is a particular example of such a set, see e.g. \cite{BHS2015,KR2015}.
Finally the Chinese Remainder Theorem allows
 us to interpret our dynamical results combinatorially.

In order to formulate our main results, we need to
recall some notions from the theory of sets of multiples \cite{hall-book}
and also to
introduce some further notation.
Let $\cB$ be a non-empty subset of $\N$.
\begin{itemize}
\item $\cB$ is \emph{primitive}, if there are no $b,b'\in\cB$ with $b\mid b'$.
From any set $\cB\subseteq\N$ one can remove all multiples of other numbers in $\cB$, which results in the set
\begin{equation}\label{eq:Bprim}
\prim{\cB}:=\cB\setminus \bigcup_{b\in\cB}b\cdot(\N\setminus\{1\})\ .
\end{equation}
$\prim{\cB}$ is primitive by construction, and $\cM_\cB=\cM_{\prim{\cB}}$.
\item $\cB$ is \emph{taut}, if $\ddelta(\cM_{\cB\setminus\{b\}})<\ddelta(\cM_\cB)$ for each $b\in\cB$, where $\ddelta(\cM_\cB):=\lim_{n\to\infty}\frac{1}{\log n}\sum_{k\leqslant n,k\in\cM_\cB}k^{-1}$
denotes the logarithmic density of this set, which is known to exist by the Theorem of Davenport and Erd\"os \cite{DE1936,DE1951}.
So a set is taut, if removing any single point from it changes its set of multiples drastically and not only by ``a few points''.
\item $\tilde{H}:=\prod_{b\in\cB}\Z/b\Z$ and $\Delta:\Z\to\tilde{H}$, $\Delta(n)=(n,n,\dots)$ -- the canonical diagonal embedding.
\item $H:=\overline{\Delta(\Z)}$ is a compact abelian group, and we denote by $m_H$ its normalised Haar measure.
\item $R_{\Delta(1)}: H\to H$ denotes the rotation by $\Delta(1)$, i.e. $(R_{\Delta(1)}h)_b=(h_b+1)$ mod $b$ for all $b\in\cB$.
\item The \emph{window} is defined as
\begin{equation}\label{eq:W}
W:=\{h\in H: h_b\neq0\ (\forall b\in\cB)\}.
\end{equation}
\item $\varphi:H\to\{0,1\}^\Z$ is the coding function: $\varphi(h)(n)=1$, if  and only if $R_{\Delta(1)}^nh\in W$, equivalently, if and only if $h_b+n\neq0$ mod $b$ for all $b\in\cB$.
\item By $S,S'\subset\cB$ we always mean \emph{finite} subsets.
\item The topology on $H$ is generated by the (open and closed) cylinder sets
\begin{equation*}
U_S(h):=\{h'\in H: \forall b\in S: h_b=h'_b\},\text{ defined for finite }S\subset\cB\text{ and }h\in H\ .
\end{equation*}
\end{itemize}
A recurring theme of the main  results  in this paper is to characterize arithmetic and dynamical properties of a $\cB$-free set $\cF_\cB$ through
group theoretical, topological and measure theoretic properties of the window $W$ defined above.
\begin{remark}
With the notation introduced above, we can write
\begin{equation*}
X_\eta=\overline{\varphi(\Delta(\Z))}\ .
\end{equation*}
This is certainly a subset of $X_\varphi:=\overline{\varphi(H)}$, the set studied in \cite{KR2015} under the name $\MWG$. In Proposition~\ref{prop:Mariusz4.2} we show that $X_\eta=X_\varphi$ when $\cB$ has \emph{light tails} (see Subsection~\ref{subsec:light-tails} for a definition), but we do not know whether also tautness of $\cB$ suffices (see also Subsection~\ref{subsec:light-tails}).
\end{remark}

\subsection{Tautness as a measure theoretic property}

\begin{theorem}\label{theo:taut-regular}
\footnote{The authors are indebted to J. Ku\l{}aga-Przymus for pointing out the relevance of \cite[Lemma 1.17]{hall-book} for the proof of this theorem.}
Suppose that the set $\cB$ is primitive.
Then the following are equivalent:
\begin{compactenum}[(i)]
\item $\cB$ is taut.
\item The window $W$ is \emph{Haar regular}, i.e. $\supp(m_H|_W)=W$.
\end{compactenum}
Moreover, these properties imply
\begin{compactenum}[(i)]
\addtocounter{enumi}{2}
\item $\overline{\Delta(\Z)\cap W}=W$.
\end{compactenum}
\end{theorem}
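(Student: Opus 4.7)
The plan is to translate Haar regularity of $W$ into positivity of the logarithmic density of $\cF_\cB$ in arithmetic progressions, and then to identify this AP-density condition with tautness via Hall's Lemma~1.17.

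First I would establish the following key identity: for any finite $S\subset\cB$ with $L:=\lcm(S)$, any $h\in W$, and $n_0\in\Z$ the residue mod $L$ with $\Delta(n_0)\in U_S(h)$,
\[
m_H\bigl(U_S(h)\cap W\bigr) \;=\; \ddelta\bigl(\cF_\cB\cap(n_0+L\Z)\bigr).
\]
For each finite $T\subset\cB\setminus S$ one computes $m_H\bigl(U_S(h)\cap\bigcap_{b\in T}\{h'_b\neq 0\}\bigr)$ by inclusion-exclusion, using that each set $\{h'_b=0\}\cap U_S(h)$ is either empty (when $\gcd(b,L)\nmid n_0$) or a coset of a closed subgroup of $H$ of index $\lcm(L,b)$; the resulting expression matches the inclusion-exclusion formula for the log-density of $(n_0+L\Z)\cap\cF_{S\cup T}$ (which exists by Davenport--Erd\"os applied within the AP). Passing to the limit $T\nearrow\cB\setminus S$ by continuity of $m_H$ on the left and Davenport--Erd\"os on the right yields the identity; the specialisation $S=\emptyset$ recovers $m_H(W)=\ddelta(\cF_\cB)$. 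Since cylinders $U_S(h)$ form a neighbourhood basis of every point of $H$, property (ii) is then equivalent to the positivity of $\ddelta(\cF_\cB\cap(n_0+L\Z))$ for every finite $S\subset\cB$ and every $n_0\in\Z$ with $n_0\not\equiv 0\pmod b$ for all $b\in S$.

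The equivalence (i)$\Leftrightarrow$(ii) is the main obstacle, and here Hall's Lemma~1.17 is decisive. For an admissible pair $(n_0,L)$, the affine bijection $k\mapsto n_0+Lk$ identifies $(n_0+L\Z)\cap\cF_\cB$ with the complement in $\Z$ of a union of arithmetic progressions indexed by $\{b\in\cB:\gcd(b,L)\mid n_0\}$ with moduli $b/\gcd(b,L)$, so the AP density in question equals $\frac{1}{L}$ times the log-density of a generalised $\cB$-free set for the rescaled family. Hall's Lemma~1.17 then provides the arithmetic bridge: $\cB$ is taut if and only if this rescaled log-density is positive for every admissible $(n_0,L)$. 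Combined with the reformulation of (ii) above, this gives (i)$\Leftrightarrow$(ii). The delicate points to check are that the ``admissibility'' condition on the Haar-regularity side matches the arithmetic non-degeneracy assumed in Hall's lemma, and that the rescaling procedure $\cB\mapsto\cB^{(n_0,L)}$ interacts correctly with tautness.

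Finally, for the ``moreover'' claim (iii), assume (ii). For $h\in W$ and any cylinder $U_S(h)$, the identity above gives $m_H(U_S(h)\cap W)>0$, whence $\cF_\cB\cap(n_0+L\Z)\neq\emptyset$; any $n$ in this intersection satisfies $\Delta(n)\in U_S(h)\cap\Delta(\Z)\cap W$. Since cylinders form a neighbourhood basis of $h$, this shows $h\in\overline{\Delta(\Z)\cap W}$, and hence $W\subseteq\overline{\Delta(\Z)\cap W}$; the reverse inclusion is immediate because $W$ is closed.
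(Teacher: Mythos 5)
Your overall strategy --- translate Haar regularity into positivity of densities of $\cF_\cB$ along arithmetic progressions, rescale, and invoke Hall's Lemma~1.17 --- is in essence the route the paper takes for the implication (i)$\Rightarrow$(ii), and your derivation of (iii) from the density identity is sound. But there are two genuine gaps. The first is that your reformulation of (ii) is wrong as stated: you quantify over all pairs $(S,n_0)$ with $n_0\not\equiv 0\pmod b$ for $b\in S$, but this does not guarantee $U_S(\Delta(n_0))\cap W\neq\emptyset$. For instance, complete $\{6,10,15\}$ to a primitive taut set $\cB$ (e.g.\ by adjoining a thin set of large primes) and take $S=\{6,10\}$, $n_0=15$: every $h\in U_S(\Delta(15))$ has $h\equiv 0 \pmod{15}$, hence $h_{15}=0$, so $U_S(\Delta(15))\cap W=\emptyset$ and $\ddelta\bigl(\cF_\cB\cap(15+30\Z)\bigr)=0$ although $15\notin\cM_S$. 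Thus your ``AP-positivity'' condition is strictly stronger than Haar regularity and fails even for taut $\cB$, so the chain of equivalences breaks at its first link. The correct admissibility condition is $U_S(\Delta(n_0))\cap W\neq\emptyset$, and expressing \emph{that} arithmetically is itself nontrivial: it is the content of Lemma~\ref{lemma:easy}, whose part b) already relies on the tautness machinery via Lemma~\ref{lemma:gen_of_4.25}. (One can instead restrict to the cofinal family of $S$ with $\cB\cap\cA_S=S$, for which the two conditions coincide, but this has to be said and used.)

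The second, more serious gap is that the arithmetic core of the theorem is compressed into the unproved assertion that Hall's Lemma~1.17 yields ``$\cB$ is taut if and only if the rescaled log-density is positive for every admissible pair.'' Lemma~1.17 supplies only one implication: a set of multiples having full density in a progression $a'\Z+r'$ with $\gcd(a',r')=1$ is Behrend. To connect this with tautness you still need Behrend's characterization of tautness (primitive plus no scaled Behrend subset), the fact that tautness of $\cB$ forces $\cB'(q)$ to be pre-taut --- which requires decomposing $\cB'(q)$ into finitely many rescaled pieces and tracking (pre-)tautness through scalings and finite unions, as in Lemmas~\ref{lemma:taut-facts} and~\ref{lemma:q-taut} --- and the final step $1\in\cB'(q)\Rightarrow q\in\cM_\cB\Rightarrow U\cap W=\emptyset$. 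For the converse direction $\neg$(i)$\Rightarrow\neg$(ii) you give no argument at all: from a scaled Behrend subset $c\cA\subseteq\cB$ one must exhibit a nonempty null neighbourhood in $W$, which the paper does by checking $\Delta(c)\in H_0\cap W$ for $H_0=\{h:h_{ca_0}\in c\Z\}$ (nonemptiness uses primitivity) and then bounding the count of relevant cylinders by the density of $c(\Z\setminus\cM_{\cA^\ell})$, which tends to $0$ precisely because $\cA$ is Behrend. Two smaller points: your rescaling divides by $L$ rather than by $q=\gcd(n_0,L)$, which produces a union of general progressions instead of a set of multiples met with a coprime progression --- the latter is the setting of Lemma~1.17; and the identity $m_H(U_S(h)\cap W)=\ddelta\bigl(\cF_\cB\cap(n_0+L\Z)\bigr)$ needs a Davenport--Erd\"os theorem relative to a progression, which you assert but do not justify (the paper sidesteps the full identity by working with $\underline d$ and finite truncations).
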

\noindent
The proof of the theorem is provided in Section~\ref{sec:proof-taut-regular}.
The concept of a \emph{Haar regular window} was introduced in \cite{KR2016} in the context of general weak model sets.
\\[0mm]

Given a set $\cB\subset\N$, one says that $h:=(h_b)_{b\in\cB}\in\Z^{\cB}$ satisfies the CRT
(Chinese Remainder Theorem) if for each finite $S\subset\cB$  there exists $n\in\Z$ such that
\begin{equation}\label{freeCRT}
h_b=n\text{ mod }b\;\text{ for each }b\in S.
\end{equation}
Clearly,
\begin{equation*}
h\text{ satisfies the CRT\; iff\;} h\in H.
\end{equation*}

We are looking for solutions of~\eqref{freeCRT} with $n\in\cF_\cB$. If for $h$ as above we can solve~\eqref{freeCRT} with $n=n_S\in\cF_\cB$ for all finite $S\subset \cB$, then we say that $h$ satisfies the $\cB$-free CRT.
A necessary condition for $h=(h_b)_{b\in\cB}$ to satisfy the $\cB$-free CRT is, of course, that $h_b\neq0\mod b$ for each $b\in\cB$, and a moment's reflection shows that
\begin{equation}\label{eq:B-free-CRT}
h\text{ satisfies the $\cB$-free CRT\; iff\;} h\in \overline{\Delta(\Z)\cap W}\ .
\end{equation}
Therefore the implication $(i)$ $\Rightarrow$ $(iii)$ of Theorem~\ref{theo:taut-regular} is an immediate consequence of the following proposition.
\begin{proposition}\label{prop:mariusz-stronger}
Assume that $\cB$ is taut. Let $h\in W$ and $S\subset\cB$ finite.
Then  the set of
\emph{$\cB$-free} integers $n$ that solve $n = h_b$ mod $b$ for $b\in S$ has asymptotic density
$m_H(U_S(h)\cap W)>0$.
\end{proposition}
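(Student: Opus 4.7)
The statement naturally splits into two parts: the positivity of the Haar measure and the existence/identification of the asymptotic density.

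For positivity, I would argue directly: $h\in W$ and $U_S(h)$ is an open neighborhood of $h$ in $H$, so Haar regularity of $W$ immediately gives $m_H(U_S(h)\cap W)>0$. Haar regularity follows from tautness by the implication $(i)\Rightarrow(ii)$ of Theorem~\ref{theo:taut-regular}, whose proof in Section~\ref{sec:proof-taut-regular} is independent of Proposition~\ref{prop:mariusz-stronger}.

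For the density I would first reformulate the target set. Put $L:=\lcm(S)$ and pick $n_0\in\Z$ with $n_0\equiv h_b\pmod b$ for every $b\in S$; such an $n_0$ exists because $h\in H=\overline{\Delta(\Z)}$. Then $\Delta^{-1}(U_S(h))=n_0+L\Z$. Since $h\in W$ means $h_b\not\equiv 0\pmod b$ for every $b\in\cB$, no $n\in n_0+L\Z$ is divisible by any $b\in S$, so the set whose density is sought equals $A:=(n_0+L\Z)\cap\cF_{\cB\setminus S}$. Next I would approximate $W$ from above by clopen sets: fix an increasing exhaustion $S\subseteq S_1\subseteq S_2\subseteq\cdots$ of $\cB$ by finite sets and set $W_k:=\bigcap_{b\in S_k}\{h'\in H:h'_b\ne 0\}$, which is clopen with $W_k\searrow W$. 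The set $A_k:=(n_0+L\Z)\cap\cF_{S_k}\supseteq A$ is a finite union of arithmetic progressions modulo $\lcm(S_k)$, and the Chinese Remainder Theorem identifies $d(A_k)=m_H(U_S(h)\cap W_k)$. Continuity of measure then yields $\overline d(A)\le\lim_k d(A_k)=m_H(U_S(h)\cap W)$.

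The main obstacle is the matching lower bound. From $A_k\setminus A\subseteq\cM_{\cB\setminus S_k}\cap(n_0+L\Z)$ one obtains
\[
\underline d(A)\ge d(A_k)-\overline d\bigl(\cM_{\cB\setminus S_k}\cap(n_0+L\Z)\bigr),
\]
so it suffices to show that the upper asymptotic density of the tail $\cM_{\cB\setminus S_k}\cap(n_0+L\Z)$ tends to $0$ as $k\to\infty$. This is precisely where tautness of $\cB$ enters, via Hall's Lemma~1.17 (acknowledged in the footnote to Theorem~\ref{theo:taut-regular}): it supplies the uniformity needed to upgrade the Davenport--Erd\H{o}s equality of logarithmic densities to control of the upper asymptotic density along the fixed arithmetic progression $n_0+L\Z$ in the taut setting. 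Combining both bounds then gives $d(A)=m_H(U_S(h)\cap W)$.
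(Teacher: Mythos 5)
Your overall architecture --- positivity from Haar regularity via Theorem~\ref{theo:taut-regular}, an upper bound by approximating $W$ from outside by the clopen sets $W_k$, and a lower bound by controlling $A_k\setminus A$ --- is exactly the paper's (the paper evaluates the densities $d(A_k)=m_H(U_S(h)\cap W_k)$ via unique ergodicity of the rotation rather than by counting residues mod $\lcm(S_k)$, but that difference is cosmetic). The problem is that the step you yourself call ``the main obstacle'' is not proved, and the tool you invoke would not prove it. First, the set to be controlled is $(n_0+L\Z)\cap(\cM_\cB\setminus\cM_{S_k})$; enlarging it to $\cM_{\cB\setminus S_k}\cap(n_0+L\Z)$ and demanding that its upper density tend to $0$ is a light-tails-type requirement (cf.\ Subsection~\ref{subsec:light-tails}), and light tails is strictly stronger than tautness, so that reduction cannot be closed under the stated hypothesis. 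Second, Hall's Lemma~1.17 is not a statement about tails of sets of multiples: in this paper it is used only inside the proof of Theorem~\ref{theo:taut-regular}, to conclude that $\cB'(q)$ is Behrend from the identity $d\left((a'\Z+r')\cap\cM_{\cB'(q)}\right)=1/a'$; it yields no bound on $\overline{d}(\cM_\cB\setminus\cM_{\cB_K})$.

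What the paper actually does at this point is simpler and uses no tautness at all: if $\Delta(n)\in W_K\setminus W$, then $n\in\cF_{\cB_K}\cap\cM_\cB=\cM_\cB\setminus\cM_{\cB_K}$, and the Davenport--Erd\"os theorem is quoted in the form $\overline{d}(\cM_\cB\setminus\cM_{\cB_K})<\varepsilon$ for $K$ large (\cite[Eq.~(0.67)]{hall-book}); together with $m_H(W_K\setminus W)<\varepsilon$ this settles both the upper and the lower bound simultaneously. Tautness enters exactly once, through Theorem~\ref{theo:taut-regular}, to guarantee $m_H(U_S(h)\cap W)>0$. So your proposal mislocates where tautness is used and leaves the decisive estimate as an unproved assertion backed by an inapposite reference; as written this is a genuine gap.
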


In Subsection~\ref{subsec:property(iii)} we provide a sequence $\cB$, which is not taut, but for which $\overline{\Delta(\Z)\cap W}=W$ (Example~\ref{example:non(iii)to(i)}). Hence $(iii)$ of Theorem~\ref{theo:taut-regular} is not equivalent to $(i)$ and $(ii)$. Here we provide two simpler examples which throw some light on property $(iii)$. Denote by $\cP\subseteq\N$ the set of all \emph{prime numbers}.

\begin{example} If $\cB=\cP$ then $H=\prod_{p\in\cP}\Z/p\Z$, $W$ is uncountable (although of Haar measure zero) and
$\overline{\Delta(\Z)\cap W}\neq W$,
since for each $n$ we find $p\in\cP$ such that $p\mid n$, so $n=0$ mod~$p$.
\end{example}

\begin{example} If $\cB\subset{\cP}$ is \emph{thin}, i.e. if $\sum_{p\in\cB}1/p<+\infty$, then $\overline{\Delta(\Z)\cap W}=W$ in view of \eqref{eq:B-free-CRT},
because each $h\in H$ satisfies the $\cB$-free CRT.  Indeed, if $S\subset\cB$ is finite and
$n=h_b\mod b$ for $b\in S$, then  $n+\lcm(S)\Z$ is the set of all solutions to this system of congruences. Moreover, if $h\in W$, then $\gcd(n,b)=1$ for all $b\in S$. We only need to find $r\in\Z$ so that $n+r\lcm(S)$ is a prime number which is not in $\cB$. The latter follows from Dirichlet's theorem:
The set of prime numbers contained in $n+\lcm(S)\Z$ is not thin. Of course this is a special case of Theorem~\ref{theo:taut-regular}.
\end{example}

\begin{remark}
Denote by $\nu_\eta:=m_H\circ\varphi^{-1}$ the \emph{Mirsky measure} on $X_\eta$.
There are two independent proofs of the fact that the two equivalent conditions from Theorem~\ref{theo:taut-regular} imply that
the measure preserving dynamical system $(X_\eta,\sigma,\nu_\eta)$ is isomorphic to the group rotation $(H,R_{\Delta(1)},m_H)$:
 In \cite[Theorem F]{BKKL2015} it is proved that this is implied by $(i)$. That it is also a direct consequence of $(ii)$ follows - in the more general context of model sets - from \cite{KR2016}. The proof uses our observation that $W$ is aperiodic (see Proposition~\ref{p:ape3}). To see this, denote by $H_W:=\{h\in H: W+h=W\}$ the period group of $W$ and by $H_W^{Haar}:=\{h\in H: m_H((W+h)\triangle W)=0\}$ its group of \emph{Haar periods}. It is easily seen that
$H_W=H_W^{Haar}$ for Haar regular $W$, in particular whenever the sequence $\cB$ is taut. Hence, if $W$ is aperiodic, it is also Haar aperiodic, and this is what is needed to apply the general theorem from \cite{KR2016} to the present context.

A word of caution is in order at this point: Althoug, in the $\cB$-free context, the window $W$ is always aperiodic (Proposition~\ref{p:ape3}), this is not necessarily true for its Haar regularization $W_{reg}:=\supp(m_H|_W)$, because that window is not of the same arithmetic type as $W$. On the other hand, as proved in \cite[Theorem C]{BKKL2015}, each non-taut set $\cB$ can be modified into a taut set $\cB'$ whose corresponding Mirsky measure $\nu_{\eta'}$ coincides with $\nu_\eta$ (as a measure on $\{0,1\}^\Z$).
The (arithmetic!) window $W'\subseteq H'$ defined by $\cB'$ is then aperiodic and Haar regular, and we suspect that it to be
closely related to $W_{reg}\subseteq H$.
\end{remark}

\subsection{The proximal and the Toeplitz case}
From  \cite[Theorem A]{BKKL2015} we know that $X_\eta$ has a unique minimal subset $M$.
In Lemma~\ref{lemma:C_varphi}, we prove that  $M=\overline{\varphi(C_\varphi)}$, where $C_\varphi$ denotes the set of continuity points of $\varphi:H\to\{0,1\}^\Z$, see also \cite[Lemma 6.3]{KR2015}.
$M$ is degenerate to a singleton, namely to $M= \{(\dots,0,0,0,\dots)\}$, if and only if $\inn(W)=\emptyset$  \cite{KR2015}, and we collect a number of equivalent characterizations of this extreme case in Theorem~\ref{theo:proximal} below.
Assuming primitivity of $\cB$ and property $(iii)$ of Theorem~\ref{theo:taut-regular}, we prove the following equivalent characterizations of minimality of $(X_\eta,\sigma)$, i.e. of  $M=X_\eta$,  in Subsection~\ref{subsec:proof-theo-minimality}.
For $S\subset\cB$ let
\begin{equation}\label{eq:A_S-def}
\cA_S:=\{\gcd(b,\lcm(S)): b\in\cB\},
\end{equation}
and note that $\cF_{\cA_S}\subseteq\cF_\cB$, because
$b\mid m$ for some $b\in\cB$ implies $\gcd(b,\lcm(S))\mid m$ for any $S\subset\cB$.
Let
\begin{equation}\label{def:Cinf}
\Ainf:=\{n\in\N: \forall_{S\subset\cB}\ \exists_{S': S\subseteq S'}: n\in\cA_{S'}\setminus S'\}.
\end{equation}
In Lemma~\ref{lemma:ASk-C} we prove: If $(S_k)_k$ is a filtration of $\cB$ with finite sets, then
\begin{equation}
\limsup_{k\to\infty}\left(\cA_{S_k}\setminus S_k\right)=\Ainf\ .
\end{equation}

\begin{theorem}\label{theo:minimality}
Suppose that $\cB$ is primitive.
Consider the following list of properties:
\begin{compactenum}
\myitem The window $W$ is topologically regular, i.e. $\overline{\inn(W)}=W$.
\myitem  $\cF_\cB=\bigcup_{S\subset\cB\text{ finite}}\cF_{\cA_S}$.
\myitem  $\Ainf=\emptyset$.
\myitem There are no $d\in\N$ and no infinite pairwise
coprime set $\cA\subseteq\N\setminus\{1\}$ such that $d\,\cA\subseteq\cB$.
\myitem $\eta=\varphi(0)$ is a Toeplitz  sequence (see \cite{Do}, \cite{JK1969} for the
definition) different from $(\dots,0,0,0,\dots)$.
\myitem $0\in C_\varphi$  and
$\varphi(0)\neq(\dots,0,0,0,\dots)$.
\myitem $\eta\in M$ and $\eta\neq(\dots,0,0,0,\dots)$.
\myitem $X_\eta$ is minimal., i.e. $X_\eta=M$, and $\card(X_\eta)>1$.
\myitem The dynamics on $X_\eta$
is a minimal almost \oneone extension of $(H,R_{\Delta(1)})$,
the rotation by $\Delta(1)$ on $H$.
\end{compactenum}
\begin{compactenum}[a)]
\item (B1) - (B6) are all equivalent,
and each of these conditions implies that $\cB$ is taut.
\item (B7) and (B8) are equivalent.
\item Each of (B1) - (B6) implies (B9).
\item (B9) implies (B7) and (B8).
\item If $\overline{\Delta(\Z)\cap W}=W$  (in particular if $\cB$ is taut), then (B1) - (B9) are all equivalent.
\end{compactenum}
\end{theorem}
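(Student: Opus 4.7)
The plan is to build the proof around a single description of $\inn(W)$ at points of $\Delta(\Z)$, then to translate each of (B1)--(B6) into this language, and finally to derive (B7)--(B9) via standard results on almost one-to-one extensions of compact group rotations.

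For (a), the pivotal computation is: for finite $S\subset\cB$, the cylinder $U_S(\Delta(n))$ is contained in $W$ iff, for every $b\in\cB\setminus S$, the system $m\equiv n\mod c$ ($c\in S$), $m\equiv 0\mod b$ has no integer solution, which by the CRT is equivalent to $\gcd(b,\lcm(S))\nmid n$. Hence, for $n\in\cF_\cB$,
\begin{equation*}
\Delta(n)\in\inn(W)\ \Longleftrightarrow\ \exists\,S\subset\cB\text{ finite with }n\in\cF_{\cA_S}.
\end{equation*}
From this, (B2)$\Leftrightarrow$(B5)$\Leftrightarrow$(B6) is immediate: the Toeplitz condition at $n\in\cF_\cB$ amounts to existence of a period $p=\lcm(S)$ with $n+p\Z\subseteq\cF_\cB$, equivalently $n\in\cF_{\cA_S}$ (positions in $\cM_\cB$ are automatically periodic by any $b\in\cB$ dividing them), and (B6) translates exactly to ``$\Delta(n)\in\inn(W)$ for every $n\in\cF_\cB$'' plus non-triviality. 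The equivalence (B2)$\Leftrightarrow$(B3) follows from Lemma~\ref{lemma:ASk-C} combined with the monotonicity $\cF_{\cA_S}\subseteq\cF_{\cA_{S'}}$ for $S\subseteq S'$. The equivalence (B3)$\Leftrightarrow$(B4) is a matter of arithmetic: $n\in\Ainf$ produces, for arbitrarily large $S'$, an element $b=na\in\cB\setminus S'$ with $\gcd(a,\lcm(S')/n)=1$, from which one extracts inductively an infinite pairwise coprime $\cA\subseteq\N\setminus\{1\}$ with $n\cA\subseteq\cB$; the converse is direct.

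To incorporate (B1), note that $W$ is closed since $H\setminus W=\bigcup_{b\in\cB}\{h\in H:h_b=0\}$ is a union of clopen cylinders, so (B1) just says that $\inn(W)$ is dense in $W$. If (B2) holds, then $\Delta(\Z)\cap W\subseteq\inn(W)$, and combining with property (iii) of Theorem~\ref{theo:taut-regular} yields (B1). Conversely, I would argue that (B4) rules out precisely the combinatorial obstruction to tautness identified by Hall and used in \cite[Thm.~C]{BKKL2015}, so (B4)$\Rightarrow$ tautness; Theorem~\ref{theo:taut-regular} then furnishes property (iii), and density of $\inn(W)$ in $W$ together with (iii) delivers $\Delta(n)\in\inn(W)$ for every $n\in\cF_\cB$, closing the ring (B1)--(B6) and at the same time proving that each of them implies tautness.

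For (b), (B7)$\Leftrightarrow$(B8) is immediate since $M$ is the unique minimal subset of $X_\eta$: $\eta\in M$ forces $X_\eta=\overline{\{\sigma^k\eta:k\in\Z\}}\subseteq M$, hence $X_\eta=M$. For (c), from (B5) the sequence $\eta$ is Toeplitz, so standard results make $X_\eta$ a minimal almost \oneone extension of its maximal equicontinuous factor; Lemma~\ref{lemma:C_varphi} together with the explicit description of $\varphi$ identifies this factor with $(H,R_{\Delta(1)})$, giving (B9). For (d), (B9) directly yields minimality, and non-triviality of $H$ (since $\cB$ is infinite) combined with the extension property excludes $\eta=(\ldots,0,0,0,\ldots)$, giving (B7) and (B8). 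For (e), under $\overline{\Delta(\Z)\cap W}=W$ only (B8)$\Rightarrow$(B1) remains: minimality of $X_\eta$ combined with the natural factor map $X_\eta\to H$ and the density of $\Delta(\Z)\cap W$ in $W$ forces $\Delta(n)\in\inn(W)$ for all $n\in\cF_\cB$, hence (B2) and thus (B1).

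The main obstacle is the implication (B4)$\Rightarrow$ tautness and its role in closing (a): one needs the Hall-type characterisation of tautness and careful handling of the case $d\in\cB$ versus $d\notin\cB$ in the obstruction $d\cA\subseteq\cB$. A secondary technical point in (B3)$\Rightarrow$(B2) is the need to produce, for $n\in\Ainf$ and a given filtration $(S_k)$, an $N\in\cF_\cB$ with $n\mid N$, which should follow from primitivity of $\cB$ together with the structure of $\cA_{S_k}$.
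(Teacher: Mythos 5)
Your overall architecture tracks the paper's (the pivotal computation is exactly Lemma~\ref{lemma:interiorW}, and your route (B4)$\Rightarrow$tautness via ``every Behrend set contains an infinite pairwise coprime subset'' is a valid and arguably cleaner alternative to the paper's Lemma~\ref{lemma:Aurelia}), but the ring (B1)--(B6) is not closed: your only argument going \emph{from} (B1) \emph{to} the arithmetic conditions is circular. You obtain property $(iii)$ of Theorem~\ref{theo:taut-regular} from tautness, tautness from (B4), and then use (B1) together with $(iii)$ to deliver (B2); that proves ``(B1) $\wedge$ (B4) $\Rightarrow$ (B2)'', which cannot show that (B1) alone implies (B2) or (B4). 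The paper closes this direction unconditionally: Proposition~\ref{prop:filtation} $(i)\Rightarrow(ii)$ shows directly that $W\neq\overline{\inn(W)}$ produces a $d$ lying in $\cA_{S_k}\setminus S_k$ for infinitely many $k$ (take $h\in W\setminus\overline{\inn(W)}$, a cylinder $U_S(h)$ missing $\inn(W)$, observe that $n+\lcm(S)\Z\subseteq\cM_{\cA_T}$ for every finite $T$, apply Dirichlet to extract a divisor of $\gcd(n,\lcm(S))$ in each $\cA_T$, and pigeonhole over the finitely many divisors); and Lemma~\ref{lemma:DeltaZ-1}, i.e.\ $\Delta(\Z)\cap\bigl(\overline{\inn(W)}\setminus\inn(W)\bigr)=\emptyset$, is what makes (B1)$\Rightarrow$(B6) work without any tautness input. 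You need a substitute for these two arguments.

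A second, smaller gap: (B5)$\Rightarrow$(B2) is not ``immediate''. A Toeplitz period $p$ at a position $n\in\cF_\cB$ need not be of the form $\lcm(S)$ for a finite $S\subset\cB$; all that $n+p\Z\subseteq\cF_\cB$ gives is $\gcd(b,p)\nmid n$ for every $b\in\cB$, and converting this into $n\in\cF_{\cA_S}$ for some finite $S$ again requires the divisor-pigeonhole argument of Proposition~\ref{prop:filtation} (or the paper's detour through tautness, the set $Y$, the map $\theta$ and minimality). Relatedly, your sketches of (B1)$\Rightarrow$(B9) and of (B8)$\Rightarrow$(B1) under $\overline{\Delta(\Z)\cap W}=W$ suppress exactly the nontrivial content: one must first show $\eta\in Y$ and $X_\eta\subseteq Y$ so that $\theta$ is a genuine equivariant factor map onto $H$ (Lemmas~\ref{lemma:Y-eta} and \ref{lemma:almost-periodic-Y}, Corollary~\ref{coro:theta}), and one must verify that the maximal equicontinuous factor of the Toeplitz system is all of $(H,R_{\Delta(1)})$ rather than a proper quotient, which is where Proposition~\ref{p:ape3} and the results of \cite{KR2015} enter. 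These are points where the proof could genuinely fail if skipped, not mere bookkeeping.
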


\begin{remark}\label{remark:window-toeplitz}
One ingredient of the proof of Theorem~\ref{theo:minimality} is the observation that the set $\cB$ is taut whenever $\eta$ is a Toeplitz sequence. This was pointed out to us by
A. Bartnicka  who also gave a proof of it, which we recall in Lemma~\ref{lemma:Aurelia}  below.

Moreover, we can interpret the result purely arithmetically as follows:  If $\cB$ is primitive and satisfies (B4) then the set of elements for which the $\cB$-free CRT  holds is topologically regular, i.e. it contains a dense subset of points for which all sufficiently close points satisfying the CRT satisfy also the $\cB$-free CRT.
\end{remark}

The following characterization of regular Toeplitz sequences is included in Proposition~\ref{prop:toeplitz} in
Subsection~\ref{subsec:Toeplitz}, where also the precise definition of regularity
of a Toeplitz sequence is recalled.
\begin{proposition}\label{prop:Toeplitz-regular}
Assume that $\overline{\inn(W)}=W$. Then the Toeplitz sequence
$\eta$ is regular, if and only if $m_H(\partial W)=0$.
\end{proposition}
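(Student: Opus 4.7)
The plan is to rewrite both conditions in terms of the continuity set $C_\varphi$ of the coding map $\varphi:H\to\{0,1\}^\Z$ and show that they coincide.

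The first step is to compute $C_\varphi$. Since $\varphi(h)(n)=\1_W(R_{\Delta(1)}^n h)$, the map $\varphi$ is continuous at $h$ iff $\1_W$ is continuous at every point of the $R_{\Delta(1)}$-orbit of $h$, that is, no iterate $R_{\Delta(1)}^n h$ lies in $\partial W$. Hence
\begin{equation*}
H\setminus C_\varphi \;=\; \bigcup_{n\in\Z} R_{\Delta(1)}^{-n}(\partial W).
\end{equation*}
Because $R_{\Delta(1)}$ preserves $m_H$, this countable union is $m_H$-null iff $m_H(\partial W)=0$, while $m_H(\partial W)>0$ already forces $m_H(H\setminus C_\varphi)\geq m_H(\partial W)>0$. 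Therefore $m_H(\partial W)=0 \iff m_H(C_\varphi)=1$.

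The second step is to identify regularity of the Toeplitz sequence $\eta$ with $m_H(C_\varphi)=1$. Under the hypothesis $\overline{\inn(W)}=W$, Theorem~\ref{theo:minimality} says $\eta=\varphi(0)$ is a Toeplitz sequence and $(X_\eta,\sigma)$ is a uniquely ergodic almost \oneone extension of $(H,R_{\Delta(1)})$, with singleton fibres exactly above $C_\varphi$ (Lemma~\ref{lemma:C_varphi}) and invariant measure projecting to $m_H$. Fix an exhausting chain $S_1\subset S_2\subset\cdots$ of finite subsets of $\cB$ and set $p_k:=\lcm(S_k)$; these provide a period structure for $\eta$. A position $n$ is $p_k$-periodic in $\eta$ exactly when $\1_W$ is constant on the closure of the orbit $\{\Delta(n+jp_k):j\in\Z\}=\Delta(n)+\overline{\Delta(p_k\Z)}$. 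Setting
\begin{equation*}
G_k:=\{h\in H:\1_W\text{ is constant on }h+\overline{\Delta(p_k\Z)}\},
\end{equation*}
unique ergodicity implies that the asymptotic density of $p_k$-periodic positions equals $m_H(G_k)$. Since $\overline{\Delta(p_k\Z)}$ decreases to $\{0\}$, the sets $G_k$ increase (up to $m_H$-null sets) to $C_\varphi$, so regularity of $\eta$ translates to $\lim_k m_H(G_k)=m_H(C_\varphi)=1$.

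The main obstacle lies in the second step, specifically in justifying that $G_k\nearrow C_\varphi$ modulo $m_H$-null sets and that the asymptotic density of $p_k$-periodic positions of $\eta$ coincides with $m_H(G_k)$. The first point follows because any $h\in C_\varphi$ has a neighbourhood basis of $R_{\Delta(1)}$-invariant open sets (generated by the $\overline{\Delta(p_k\Z)}$) on which $\1_W$ is constant, whereas any $h$ whose orbit meets $\partial W$ is excluded from every $G_k$; the second is a standard application of unique ergodicity to the characteristic function of $\varphi^{-1}(\{\text{$p_k$-periodic positions}\})$. Once these standard Toeplitz-flow computations are in place, the first step gives the equivalence.
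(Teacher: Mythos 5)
Your first step is sound: by Lemma~\ref{lemma:C_varphi}a, $H\setminus C_\varphi=\bigcup_{n\in\Z}\bigl(\partial W-\Delta(n)\bigr)$, and since $R_{\Delta(1)}$ preserves $m_H$ this countable union is null precisely when $m_H(\partial W)=0$. The gap is in the second step, in the sentence ``a position $n$ is $p_k$-periodic in $\eta$ exactly when $\1_W$ is constant on the closure of the orbit $\Delta(n)+\overline{\Delta(p_k\Z)}$''. Only one implication there is a soft topological fact. Writing $H_k:=\overline{\Delta(p_k\Z)}$, so that $\Delta(n)+H_k=U_{S_k}(\Delta(n))$ for $p_k=\lcm(S_k)$: if $\eta(n+jp_k)=1$ for all $j$, then $\Delta(n+p_k\Z)\subseteq W$ and, $W$ being closed, the whole coset lies in $W$; and constancy of $\1_W$ on the coset clearly forces $p_k$-periodicity. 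But if $\eta(n+jp_k)=0$ for all $j$, i.e.\ $n+p_k\Z\subseteq\cM_\cB$, the orbit lies in the \emph{open} set $H\setminus W$ and its closure may still meet $\partial W$, in which case $\1_W$ is not constant on the coset and $\Delta(n)\notin G_k$. So in general you only obtain $\{n:\Delta(n)\in G_k\}\subseteq\{n:\ n\text{ is }p_k\text{-periodic}\}$, hence only the inequality $\bigl(\text{density of }p_k\text{-periodic positions}\bigr)\geqslant m_H(G_k)$. Since $m_H(G_k)\to 1-m_H(\partial W)$, this still gives ``$m_H(\partial W)=0\Rightarrow\eta$ regular'', but the converse direction needs an upper bound on the density of periodic positions tending to $1-m_H(\partial W)$, and that is exactly what your argument does not deliver.

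The missing ingredient is arithmetic rather than dynamical: one must show that $n+p_k\Z\subseteq\cM_\cB$ already forces $n\in\cM_{S_k}$ (for a filtration with $\cB\cap\cA_{S_k}=S_k$, cf.\ Lemma~\ref{lemma:easy}c), equivalently $U_{S_k}(\Delta(n))\cap W=\emptyset$. This is where tautness of $\cB$ enters (available here, since topological regularity of $W$ makes $\eta$ a Toeplitz sequence and hence $\cB$ taut by Lemma~\ref{lemma:Aurelia}), via Dirichlet's theorem and the Behrend-set machinery of Lemma~\ref{lemma:gen_of_4.25}, exactly as in the proof of Lemma~\ref{lemma:easy}b. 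Two smaller inaccuracies: $\bigcup_kG_k$ equals $H\setminus\partial W$, not $C_\varphi$ (the sets $G_k$ are not $R_{\Delta(1)}$-invariant), although both have full measure iff $m_H(\partial W)=0$, so this does not hurt; and unique ergodicity is unnecessary for computing the density of $\{n:\Delta(n)\in G_k\}$, since $G_k$ is a union of cosets of the finite-index clopen subgroup $H_k$ and that set is a union of residue classes mod $p_k$. For comparison, the paper sidesteps all of this: it takes as definition of regularity that $\inf_S\bar d(\cM_{\cA_S}\setminus\cM_\cB)=0$ (the upper density of the positions not certified periodic at stage $k$), and the proposition is then immediate from the identity $m_H(\partial W)=\inf_S\bar d(\cM_{\cA_S}\setminus\cM_\cB)$ of Lemma~\ref{lemma:Haar-boundary} together with \eqref{cirm6}. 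Your route proves a slightly stronger statement (equivalence with the standard notion of regularity via periodic positions), but only once the closed-versus-open issue above is repaired by the tautness argument.
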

In Subsection~\ref{subsec:Toeplitz} we also provide examples of sets $\cB$ that give rise to regular Toeplitz sequences and others giving rise to irregular Toeplitz sequences.
Note also that
$m_H(\partial W)=0$ if and only if
$\inf_{S\subset\cB}\bar d(\cM_{\cA_S}\setminus\cM_\cB)=0$, see
Lemma~\ref{lemma:Haar-boundary}, and observe that $m_H(\partial W)=0$ implies unique ergodicity of the dynamics on $X_\eta$ \cite[Theorem 2c]{KR2015}.
\quad\\

The next theorem is complementary to Theorem~\ref{theo:minimality}.
Most of its equivalences follow from results in \cite{BKKL2015} and \cite{KR2015} and are proved in Subsection~\ref{subsec:proof-theo-proximal}.
They do not rely on the more advanced arithmetic concept of tautness.
\begin{theorem}\label{theo:proximal}
The following are equivalent:
\begin{compactenum}
\myitem $\inn(W)=\emptyset$
\myitem  $\bigcup_{S\subset\cB\text{ finite}}\cF_{\cA_S}=\emptyset$,
i.e. $\cF_{\cA_S}=\emptyset$ for all finite $S\subset\cB$.
\myitem $\forall S\subset\cB: 1\in\cA_S$.
\myitem $\cB$ contains an infinite pairwise coprime subset.
\myitem If $\cC\subseteq\N$ is finite and if $\cB\subseteq\cM_\cC$, then $1\in\cC$.
\myitem $M=\{(\dots,0,0,0,\dots)\}$.
\myitem The dynamics on $X_\eta$ are proximal.
\end{compactenum}
\end{theorem}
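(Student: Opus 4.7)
The strategy is to verify the arithmetic equivalences (C2)--(C5) directly, to derive (C1) $\Leftrightarrow$ (C2) by unpacking the cylinder structure of $H$, and to connect the resulting cluster to the dynamical statements (C6) and (C7) using the equivalence (C1) $\Leftrightarrow$ (C6) cited from \cite{KR2015} together with the standard characterization of proximal systems.

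For the arithmetic part, (C2) $\Leftrightarrow$ (C3) is immediate, since $\cF_{\cA_S}=\emptyset$ means $\cM_{\cA_S}=\Z$, which is equivalent to some element of $\cA_S$ dividing $1$, i.e.\ to $1\in\cA_S$. For (C3) $\Rightarrow$ (C4) I would build a pairwise coprime sequence $(b_n)\subseteq\cB$ inductively: given pairwise coprime $b_1,\ldots,b_n\in\cB$, apply (C3) with $S=\{b_1,\ldots,b_n\}$ to produce $b_{n+1}\in\cB$ coprime to $\lcm(S)$. The converse (C4) $\Rightarrow$ (C3) is clear since only finitely many members of an infinite pairwise coprime subset of $\cB$ can share a prime factor with a fixed $\lcm(S)$. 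For (C3) $\Leftrightarrow$ (C5): failure of (C3) at some finite $S$ provides the finite set $\cC:=\{p\in\cP:p\mid\lcm(S)\}$ with $\cB\subseteq\cM_\cC$ and $1\notin\cC$, contradicting (C5); conversely, given finite $\cC\not\ni 1$ with $\cB\subseteq\cM_\cC$, after discarding elements of $\cC$ having no multiple in $\cB$ one picks $b_c\in\cB$ with $c\mid b_c$ for each $c\in\cC$, and then $S:=\{b_c:c\in\cC\}$ satisfies $\gcd(b,\lcm(S))>1$ for every $b\in\cB$, violating (C3).

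The crucial step is (C1) $\Leftrightarrow$ (C2). I claim that for finite $S\subset\cB$ and $n\in\Z$, the cylinder $U_S(\Delta(n))$ is contained in $W$ if and only if $n\in\cF_{\cA_S}$. Writing $U_S(\Delta(n))=\Delta(n)+K_S$ for $K_S:=\{k\in H:k_b=0\text{ for all }b\in S\}$, the key computation---obtained from density of $\Delta(\Z)$ in $H$ together with the Chinese Remainder Theorem---identifies $K_S=\overline{\Delta(\lcm(S)\Z)}$ and shows that for each $b\in\cB$ the $b$-coordinates of elements of $K_S$ fill out precisely the subgroup $d_b\Z/b\Z\subseteq\Z/b\Z$, where $d_b:=\gcd(b,\lcm(S))$. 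Consequently $U_S(\Delta(n))\subseteq W$ iff the coset $n+d_b\Z$ is disjoint from $b\Z$ for every $b\in\cB$, iff $d_b\nmid n$ for every $b\in\cB$, iff $n\in\cF_{\cA_S}$. Since every cylinder $U_S(h)$ with $h\in H$ coincides with some $U_S(\Delta(n))$ (by CRT, $h$ is a limit of $\Delta(n_j)$'s whose $S$-coordinates eventually stabilize), and since these cylinders form a basis of the topology on $H$, this yields (C1) $\Leftrightarrow$ (C2). I expect the delicate point to be exactly this coordinate-by-coordinate description of $K_S$.

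For the dynamical part, (C1) $\Leftrightarrow$ (C6) is quoted from \cite{KR2015}. For (C6) $\Leftrightarrow$ (C7), I use the standard theorem that a topological dynamical system is proximal if and only if it has a unique minimal subset and that subset is a singleton fixed point. Since $X_\eta$ has a unique minimal subset $M$ (recalled above from \cite[Theorem A]{BKKL2015}), proximality of $(X_\eta,\sigma)$ is equivalent to $|M|=1$. To see that such a singleton must in fact be $\{(\dots,0,0,0,\dots)\}$, I observe that the only shift-invariant elements of $\{0,1\}^\Z$ are the two constant sequences, and that the constant $\mathbf 1$ sequence cannot lie in $X_\eta$: for any fixed $b\in\cB$, the closed property ``every window of $b$ consecutive coordinates contains a $0$'' holds for $\eta$ (since $\eta$ vanishes on $b\Z$) and is preserved under pointwise limits, hence holds for every element of $X_\eta$.
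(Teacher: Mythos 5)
Your argument is correct, but it is organized rather differently from the paper's proof, so a comparison is worthwhile. Your key step (C1)\,$\Leftrightarrow$\,(C2) is, in substance, the paper's Lemma~\ref{lemma:interiorW}a: there the equivalence $U_S(\Delta(n))\subseteq W\Leftrightarrow n\in\cF_{\cA_S}$ is obtained by testing the clopen cylinder on the dense subset $\Delta(n+\lcm(S)\Z)$ and using B\'ezout/CRT to pass from ``$c\mid n+k\lcm(S)$ for some $k$'' to ``$\gcd(c,\lcm(S))\mid n$''; your identification of $K_S=\overline{\Delta(\lcm(S)\Z)}$ and of its coordinate projections $\gcd(b,\lcm(S))\,\Z/b\Z$ is the same computation in group-theoretic dress, resting on the same density-plus-closedness observation. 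Where you genuinely diverge is in the remaining equivalences. The paper closes a single cycle: (C1)\,$\Rightarrow$\,(C6) via $\varphi(C_\varphi)=\{(\dots,0,0,0,\dots)\}$ from \cite{KR2015}, then cites \cite[Theorem 3.8]{BKKL2015} for the mutual equivalence of (C4), (C5), (C6), (C7), and finishes with the one-line implication (C5)\,$\Rightarrow$\,(C3): since $\cB\subseteq\cM_{\cA_S}$ with $\cA_S$ finite, (C5) forces $1\in\cA_S$ directly --- slightly slicker than your contrapositive via the primes dividing $\lcm(S)$. You instead prove the arithmetic equivalences (C3)\,$\Leftrightarrow$\,(C4)\,$\Leftrightarrow$\,(C5) by hand and obtain (C6)\,$\Leftrightarrow$\,(C7) from the standard characterization of proximal systems (unique minimal set which is a fixed point), combined with uniqueness of $M$ from \cite[Theorem A]{BKKL2015}; your observation that $(\dots,1,1,1,\dots)\notin X_\eta$ correctly pins the fixed point down to the zero sequence. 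This buys a more self-contained proof --- you import only (C1)\,$\Leftrightarrow$\,(C6) and the uniqueness of $M$ --- at the cost of reproving material that \cite{BKKL2015} already covers. One point worth making explicit in your (C3)\,$\Rightarrow$\,(C4) induction: the element $b_{n+1}$ with $\gcd(b_{n+1},\lcm(S))=1$ is automatically distinct from $b_1,\dots,b_n$ and from $1$ because $\cB$ is primitive and infinite (the standing assumption of Section~\ref{sec:proof-theo-minimal}); without primitivity the implication (C3)\,$\Rightarrow$\,(C4) can fail (e.g.\ $\cB=\{1,2,4,8,\dots\}$), so the hypothesis is not merely cosmetic.
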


\begin{remark}
Under the conditions
of Theorem \ref{theo:proximal}  no element  of $W$ is
stable, that is, for each $h$ staisfying the $\cB$-free CRT there is an element
  $h'\in\Z^{\cB}$ arbitrarily close to $h$ which satisfies the CRT but not the $\cB$-free
 CRT.\end{remark}

\subsection{The maximal equicontinuous factor}
We finish with a result that identifies the maximal equicontinuous factor of the dynamics on $X_\eta$ and answers Question 3.14 in \cite{BKKL2015}.
Given a subset $A\subseteq H$, denote by
\begin{equation*}
H_A:=\left\{h\in H: A+h=A\right\}
\end{equation*}
the \emph{period group} of $A$. The set $A\subseteq H$ is \emph{topologically aperiodic}, if $H_A=\{0\}$.
Observe also that $H_{\inn(A)}$ is a closed subgroup of $H$, whenever $A$ is closed \cite[Lemma 6.1]{KR2016}.
%

In Proposition~\ref{p:ape3} we prove that $H_W=\{0\}$ whenever $\cB$ is primitive.
If $\inn(W)=\emptyset$, then of course $H_{\inn(W)}=H$. If $\inn(W)\neq\emptyset$,
the situation is more complicated: $H_{\inn(W)}$ is obviously always a strict subgroup of $H$, and very often $H_{\inn(W)}=\{0\}$, but
there are examples where $H_{\inn(W)}$ is a non-trivial strict subgroup of $H$, see Subsection~\ref{subsec:examples}. In any case, however, $H_{\inn(W)}$ determines the maximal equicontinuous factor. The following is proved in \cite[Theorem A2]{KR2016}:
\begin{equation}
\begin{minipage}{0.9\textwidth}
\textbf{Theorem}\; \emph{The translation by $\Delta(1)+H_{\inn(W)}$ on $H/H_{\inn(W)}$ is the maximal equicontinuous factor of the dynamics on $X_\eta$.}
\end{minipage}
\end{equation}
Let $S_1\subset S_2\subset\dots$ be any filtration of $\cB$ by finite sets.
In Subsection~\ref{subsec:period-group} we define divisors $d_k$ of $\lcm(S_k)$:
\begin{equation}
d_k:=\lim_{j\to\infty}\gcd(s_k,c_{k+j})\,,\;\text{ where }\;
s_k:=\lcm(S_k)\;\text{ and }\;
c_l:=\text{minimal period of }\cM_{\cA_{S_l}}\ .
\end{equation}
By Remark \ref{remark:factors} we have $\frac{s_k}{d_k}\mid\frac{s_{k+1}}{d_{k+1}}$ for any $k$.
The sequences $(s_k)$, $(d_k)$ and $(c_k)$ determine $H_{\inn(W)}$ in the following way:
\begin{proposition}\label{prop:period}
\begin{compactenum}[a)]
\item
\begin{equation*}
0\rightarrow H_{\inn(W)}\rightarrow H\cong\lim\limits_{\leftarrow}\Z/{s_k}\Z\rightarrow \lim\limits_{\leftarrow}\Z/{d_k}\Z\rightarrow 0
\end{equation*}
is an exact sequence.\footnote{A sequence of abelian groups and homomorphisms  $...\mapr{}{} M_{k-1}\mapr{f_{k-1}}{} M_{k}\mapr{f_k}{} M_{k+1}\rightarrow...$ is called {\em exact}
if the kernel of  $f_k$ is equal to the image of $f_{k-1}$ for any $k$. In particular, a sequence
$$
0\rightarrow M' \mapr{f}{} M\mapr{g}{} M''\rightarrow 0
$$
is exact, when $f$ is injective, the kernel of $g$ equals the image of $f$ and $g$ is surjective. We say that it is a "short exact sequence". In particular, the homomorphism $g$ induces an isomorphism $M''\cong M/f(M')$ in this case.
}
\item $H_{\inn(W)}\cong \lim\limits_{\leftarrow}\Z/\frac{s_k}{d_k}\Z$.
\item $H/H_{\inn(W)}\cong \lim\limits_{\leftarrow}\Z/{d_k}\Z$.
\item $H_{\inn(W)}=\{0\}$ if and only if $s_k=d_k$ for each $k\in\N$,
equivalently if for each $b\in\cB$ there is $n>0$ such that $b$ divides $c_{n}$.
\end{compactenum}
\end{proposition}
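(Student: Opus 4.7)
I would prove the proposition by identifying the closed subgroup $H_{\inn(W)}\subseteq H\cong\lim\limits_{\leftarrow}\Z/s_k\Z$ with $\lim\limits_{\leftarrow}(d_k\Z/s_k\Z)$, realized via the compatible inclusions $d_k\Z/s_k\Z\subseteq\Z/s_k\Z$, the compatibility being exactly the divisibility $s_k/d_k\mid s_{k+1}/d_{k+1}$ of Remark~\ref{remark:factors}. Granting this, (a) is obtained by taking projective limits of the termwise short exact sequences $0\to d_k\Z/s_k\Z\to\Z/s_k\Z\to\Z/d_k\Z\to0$, which are compatible across $k$ and present no $\lim^1$ obstruction since all groups are finite. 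Part (b) then follows via the isomorphism $d_k\Z/s_k\Z\cong\Z/(s_k/d_k)\Z$, $n\mapsto n/d_k$; part (c) is the cokernel part of (a); and (d) is a direct consequence of (b), because $H_{\inn(W)}=\{0\}$ iff every $\Z/(s_k/d_k)\Z$ is trivial iff $s_k=d_k$, and by definition of $d_k$ together with the fact that each $b\in\cB$ lies in some $S_k$, this is equivalent to the arithmetic condition that every $b\in\cB$ divides some $c_n$.

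For the identification itself I first describe $\inn(W)$ concretely. A direct check shows that the cylinder $U_{S_k}(h)$ lies inside $W$ iff $\pi_k(h)\in V_k:=\cF_{\cA_{S_k}}\bmod s_k$, since the range of the $b$-th coordinate on $U_{S_k}(h)\cap H$ is the coset $\pi_k(h)+\gcd(b,s_k)\Z\bmod b$, which avoids $0$ precisely when $\pi_k(h)\not\equiv 0\pmod{a}$ for every $a\in\cA_{S_k}$. Hence $\inn(W)=\bigcup_k\pi_k^{-1}(V_k)$, an increasing union, and $V_k$ is a union of cosets of the minimal-period subgroup $c_k\Z/s_k\Z$ of $\Z/s_k\Z$.

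For the inclusion $d_k\Z/s_k\Z\subseteq\pi_k(H_{\inn(W)})$, I construct lifts along the projective system: the divisibility $d_k\mid d_{k+1}$ (which follows from $d_k=\gcd(s_k,c_{k+j})\mid\gcd(s_{k+1},c_{k+j})$ for $j$ large) together with the fact that for $l$ large $d_k\Z/s_k\Z$ equals the image under reduction modulo $s_k$ of the period subgroup $c_l\Z/s_l\Z$ of $V_l$ allow one to coherently choose $\alpha_l\in c_l\Z/s_l\Z$ projecting onto any prescribed element of $d_k\Z/s_k\Z$; the resulting $h^*\in H$ then preserves $\pi_l^{-1}(V_l)$ for all large $l$, and hence preserves $\inn(W)$. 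Conversely, for $h^*\in H_{\inn(W)}$ and each $l$, compactness of $\pi_l^{-1}(V_l)$ inside $\inn(W)=\bigcup_m\pi_m^{-1}(V_m)$ forces $\pi_l^{-1}(V_l)+h^*\subseteq\pi_{m(l)}^{-1}(V_{m(l)})$ for some $m(l)\geq l$; translated to $\Z/s_{m(l)}\Z$, this says that $\pi_{m(l)}(h^*)$ shifts the preimage $(\pi_l^{m(l)})^{-1}(V_l)$ into $V_{m(l)}$. Exploiting that $V_{m(l)}$ has exact minimal period $c_{m(l)}$ and then reducing modulo $s_k$ confines $\pi_k(h^*)$ to $\gcd(c_{m(l)},s_k)\Z/s_k\Z$, and letting $l\to\infty$ traps $\pi_k(h^*)$ in $d_k\Z/s_k\Z$.

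The main obstacle is the converse direction just sketched. Naively one would like to argue via the period group of the projection $\pi_k(\inn(W))\subseteq\Z/s_k\Z$, but this period group can be strictly larger than $d_k\Z/s_k\Z$ (small examples show that $\pi_k(\inn(W))$ may be all of $\Z/s_k\Z$ even when $\inn(W)$ is a proper subset of $H$). The argument must therefore be carried out at a level $l$ large enough that the exact minimal period $c_l$ of $V_l$ is still faithfully recorded, and only then projected back to $\Z/s_k\Z$; ensuring that this projection behaves correctly as $l\to\infty$, in a way matching the definition $d_k=\lim_{j\to\infty}\gcd(s_k,c_{k+j})$, is the delicate technical point.
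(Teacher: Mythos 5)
Your algebraic skeleton coincides with the paper's: the paper likewise takes inverse limits of the termwise short exact sequences $0\to M_k\to\Z/s_k\Z\to\Z/d_k\Z\to 0$ with $M_k=d_k\Z/s_k\Z$, verifies surjectivity on the limits by an explicit CRT construction, and reduces everything to the identification $H_{\inn(W)}\cong\lim\limits_{\leftarrow}M_k$ (Proposition~\ref{prop:periods}). Your forward inclusion $d_k\Z/s_k\Z\subseteq\pi_k(H_{\inn(W)})$ is also essentially the paper's: the coherent lifts exist because $\gcd(s_k,d_{k+1})=d_k$, though note that to continue the lifting you must choose $\alpha_l$ divisible by $d_l$ and not merely by $c_l$ (divisibility by $c_l$ alone does not guarantee that $\alpha_l$ lies in the image of $c_{l+1}\Z/s_{l+1}\Z$ under reduction mod $s_l$); since $c_l\mid d_l$ this is harmless.

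The genuine gap is in the converse inclusion. Compactness gives you $\pi_l^{-1}(V_l)+h^*\subseteq\pi_{m(l)}^{-1}(V_{m(l)})$, i.e.\ $(\pi_l^{m(l)})^{-1}(V_l)+\pi_{m(l)}(h^*)\subseteq V_{m(l)}$. This only says that a translate of a \emph{subset} of $V_{m(l)}$ lands inside $V_{m(l)}$; it does not make $\pi_{m(l)}(h^*)$ a period of $V_{m(l)}$, so ``exploiting that $V_{m(l)}$ has exact minimal period $c_{m(l)}$'' has nothing to act on, and the claimed confinement of $\pi_k(h^*)$ to $\gcd(c_{m(l)},s_k)\Z/s_k\Z$ is unsupported --- a set shifted into a strictly larger set constrains the larger set, not the shift. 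The paper closes this at the \emph{same} level $k$: for $m\in\cF_{\cA_{S_k}}$ the clopen cylinder $U_{S_k}(\Delta(m))$ lies in $\inn(W)$, its translate by $h$ lies in $\inn(W)\cap U_{S_k}(\Delta(m+n_k))$, and since the translate and the target cylinder are both open with equal Haar measure, they coincide; hence $\cF_{\cA_{S_k}}+n_k=\cF_{\cA_{S_k}}$, so $c_k\mid n_k$ by Lemma~\ref{period-of-MA}. The upgrade from $c_k\mid n_k$ to $d_k\mid n_k$ is then \emph{not} extracted from any single level, but from combining $c_{k+j}\mid n_{k+j}$ with the coherence $s_k\mid n_{k+1}-n_k$ (Lemma~\ref{lemma:equivalent}), which yields $\gcd(s_k,c_{k+j})\mid n_k$ for every $j$ and hence $d_k\mid n_k$. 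To repair your argument you would need to replace the ``shift into a higher level'' step by this same-level period statement together with the cross-level descent.
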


\begin{theorem}\label{theo:MEF}
\begin{compactenum}[a)]
\item The translation by $(1,1,\dots)$ on $H/H_{\inn(W)}\cong\lim\limits_{\leftarrow}\Z/{d_k}\Z$ is the
maximal equicontinuous factor of the dynamics on $X_\eta$.
\item In case d) of Proposition~\ref{prop:period},
the translation by $\Delta(1)$ on $H\cong \lim\limits_{\leftarrow}\Z/{s_k}\Z$ is the maximal equicontinuous factor of the dynamics on $X_\eta$.
\end{compactenum}
\end{theorem}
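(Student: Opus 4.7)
My plan is to derive Theorem~\ref{theo:MEF} almost entirely by combining the general result quoted in the excerpt from \cite[Theorem A2]{KR2016} with the algebraic description of $H_{\inn(W)}$ collected in Proposition~\ref{prop:period}. The cited theorem states that the maximal equicontinuous factor of $(X_\eta,\sigma)$ is the translation by $\Delta(1)+H_{\inn(W)}$ on the quotient group $H/H_{\inn(W)}$, so the task reduces to identifying this quotient concretely and tracking the image of $\Delta(1)$.

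For part (a), I would invoke Proposition~\ref{prop:period}(c) to obtain the isomorphism $H/H_{\inn(W)}\cong \lim\limits_{\leftarrow}\Z/d_k\Z$. This isomorphism is induced by the surjection in the exact sequence of Proposition~\ref{prop:period}(a), which after identifying $H$ with $\lim\limits_{\leftarrow}\Z/s_k\Z$ is simply the coordinatewise reduction modulo $d_k$ (recall $d_k\mid s_k$). Under the identification $H\cong\lim\limits_{\leftarrow}\Z/s_k\Z$, the element $\Delta(1)\in H$ corresponds to the compatible family $(1 \mod s_k)_k$, since $\Delta(1)$ has coordinate $1$ in each $\Z/b\Z$ factor. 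Reducing modulo $d_k$ in each coordinate yields exactly the element $(1,1,\dots)\in\lim\limits_{\leftarrow}\Z/d_k\Z$, which proves (a).

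For part (b), I would use Proposition~\ref{prop:period}(d): the assumption $s_k=d_k$ for all $k$ is equivalent to $H_{\inn(W)}=\{0\}$, so the quotient map $H\to H/H_{\inn(W)}$ is the identity and the isomorphism of (a) specializes to $H\cong\lim\limits_{\leftarrow}\Z/s_k\Z$. The coset $\Delta(1)+H_{\inn(W)}$ then collapses to $\Delta(1)$ itself, so the maximal equicontinuous factor is the translation by $\Delta(1)$ on $H$. The only non-routine point in the whole argument is verifying that the quotient in \cite[Theorem A2]{KR2016} is literally the same as the one appearing in Proposition~\ref{prop:period}; since both are described as $H/H_{\inn(W)}$, with $H_{\inn(W)}$ defined in the same way in both places, this identification is immediate and there is no substantive obstacle beyond bookkeeping of the isomorphisms.
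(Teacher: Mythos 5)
Your proposal is correct and follows essentially the same route as the paper, which simply declares Theorem~\ref{theo:MEF} an immediate corollary of Proposition~\ref{prop:period} combined with the quoted result from \cite[Theorem A2]{KR2016}. Your additional bookkeeping --- identifying the quotient map as coordinatewise reduction modulo $d_k$ and tracking $\Delta(1)$ to $(1,1,\dots)$ via the isomorphism $\sigma$ of Remark~\ref{limits} --- is exactly the content the paper leaves implicit.
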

In Subsection~\ref{subsec:examples} we provide a number of examples illustrating this theorem.

\begin{remark}\label{rem:Y}
In \cite{BKKL2015}, the following set $Y$ is defined: \footnote{Versions of this set occur also in \cite{Peckner2012} and \cite{BaakeHuck14}.}
\begin{equation*}
Y:=\left\{x\in\{0,1\}^\Z: \card(\supp(x)\mod b)=b-1\;\forall b\in\cB\right\}.
\end{equation*}
Observe that $\card(\supp(x)\mod b)\leqslant b-1$ for all $x\in X_\eta$ and $b\in\cB$.
\footnote{Indeed, if $\card(\supp(x)\mod b)= b$ for some $x\in X_\eta$ and $b\in\cB$,
then this happens on some integer interval $[-M,M]$, and hence
$\card(\supp(\eta)\mod b)= b$, which contradicts the fact that $\supp(\eta)\subseteq\cF_\cB$.\label{foot:b-1}}
Proposition 3.27 of \cite{BKKL2015} asserts that
$(H,R_{\Delta(1)})$ is the maximal equicontinuous factor of $(X_\eta,S)$,
whenever $X_\eta\subseteq Y$. Hence, in that case, $H_{\overline{\inn W}}= H_{\inn W}=\{0\}=H_W$ by Theorem~\ref{theo:MEF} and Proposition~\ref{p:ape3}. This is the second one of the following two implications:
\begin{equation}
W=\overline{\inn W}\quad\Rightarrow\quad X_\eta\subseteq Y\quad\Rightarrow\quad
H_W=H_{\overline{\inn W}}\ .
\end{equation}
The first one is proved in Proposition~\ref{eta_in_Y}.
\end{remark}

\section{Tautness of $\cB$ and Haar regularity of $W$}\label{sec:proof-taut-regular}
\subsection{Arithmetic of $\cB$ and topology of $W$, part I}

\begin{definition}
Let $\cM\subseteq\N$.
\begin{compactenum}[a)]
\item The \emph{upper} resp. \emph{lower density} of $\cM$ is
\begin{equation*}
\overline{d}(\cM)=\limsup_{N\to\infty}
\frac1N\card\left(\cM\cap\{1,\dots,N\}\right)
\text{ resp. }
\underline{d}(\cM)=\liminf_{N\to\infty}
\frac1N\card\left(\cM\cap\{1,\dots,N\}\right)
\end{equation*}
If the limit exists, we write $d(\cM)$.
\item The \emph{logarithmic density} of $\cM$ is
\begin{equation*}
\ddelta(\cM)=\lim_{N\to\infty}\frac{1}{\log N}\sum_{n\in\cM\cap\{1,\dots,N\}}\frac{1}{n}
\end{equation*}
whenever the limit exists.
\end{compactenum}
\end{definition}
The theorem of Davenport and Erd\"os \cite{DE1936,DE1951} asserts that $\ddelta(\cM_\cB)=\underline{d}(\cM_\cB)$ exists for any subset $\cB\subseteq\N$.
\begin{definition}
$\cB\subseteq\N\setminus\{1\}$ is a \emph{Behrend sequence}, if $\ddelta(\cM_\cB)=1$.
\end{definition}
Recall that $\cB$ is {taut}, if $\ddelta(\cM_{\cB\setminus\{b\}})<\ddelta(\cM_\cB)$ for each $b\in\cB$.
The following is a corollary to a theorem of Behrend \cite{Behrend1948}:
\begin{proposition}
A set $\cB\subseteq\N$ is taut, if and only if it is primitive and there are no $q\in\N$ and no
Behrend set $\cA\subseteq \N\setminus\{1\}$ such that
$q\,\cA\subseteq\cB$ \cite[Corollary 0.19]{hall-book}.
\end{proposition}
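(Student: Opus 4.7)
The plan is to use \emph{Behrend's inequality} \cite{hall-book},
\begin{equation*}
1-\ddelta(\cM_{\cA_1\cup\cA_2})\;\geqslant\;(1-\ddelta(\cM_{\cA_1}))(1-\ddelta(\cM_{\cA_2}))\qquad(\cA_1,\cA_2\subseteq\N),
\end{equation*}
together with the two immediate corollaries I will quote as (a) if a finite union $\cA_1\cup\dots\cup\cA_k$ is Behrend then at least one $\cA_i$ is Behrend, and (b) if $\cA\subseteq\N\setminus\{1\}$ is Behrend and $a_0\in\cA$, then $\cA\setminus\{a_0\}$ is still Behrend (apply the inequality with $\{a_0\}$, using $\ddelta(\cM_{\{a_0\}})=1/a_0<1$). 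I will also use the scaling identity $\ddelta(c\cdot X)=\ddelta(X)/c$ for $c\in\N$ and $X\subseteq\N$.

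For $(\Rightarrow)$, assume $\cB$ is taut. Primitivity is forced since $b\mid b'$ with $b\neq b'$ in $\cB$ gives $\cM_{\cB\setminus\{b'\}}=\cM_\cB$. Now suppose for contradiction $q\cA\subseteq\cB$ with $\cA\subseteq\N\setminus\{1\}$ Behrend; pick $a_0\in\cA$ and set $b:=qa_0$. Since $q(\cA\setminus\{a_0\})\subseteq\cB\setminus\{b\}$, the divisibility relation $qa\mid qa_0m\Leftrightarrow (a/\gcd(a,a_0))\mid m$ yields
\begin{equation*}
\cM_\cB\setminus\cM_{\cB\setminus\{b\}}\;\subseteq\;b\N\setminus\cM_{q(\cA\setminus\{a_0\})}\;=\;qa_0\cdot\cF_{\tilde\cA},\qquad\tilde\cA:=\{a/\gcd(a,a_0):a\in\cA\setminus\{a_0\}\}.
\end{equation*}
Each $a/\gcd(a,a_0)$ divides $a$, so $\cM_{\cA\setminus\{a_0\}}\subseteq\cM_{\tilde\cA}$; combined with (b) this makes $\tilde\cA$ Behrend, hence $\ddelta(\cF_{\tilde\cA})=0$, and by the scaling identity the displayed superset has log-density $0$. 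Since both log-densities exist by Davenport--Erd\"os, this forces $\ddelta(\cM_\cB)=\ddelta(\cM_{\cB\setminus\{b\}})$, contradicting tautness.

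For $(\Leftarrow)$, assume $\cB$ is primitive and not taut, and pick $b\in\cB$ with $\ddelta(\cM_{\cB\setminus\{b\}})=\ddelta(\cM_\cB)$. The same divisibility bookkeeping gives
\begin{equation*}
\cM_\cB\setminus\cM_{\cB\setminus\{b\}}\;=\;b\bigl(\N\setminus\cM_{\cA'}\bigr),\qquad\cA':=\bigl\{b'/\gcd(b,b'):b'\in\cB\setminus\{b\}\bigr\},
\end{equation*}
and primitivity of $\cB$ yields $\cA'\subseteq\N\setminus\{1\}$. The equality of log-densities forces the left-hand side to have log-density $0$, equivalently $\cA'$ is Behrend. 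I then partition $\cA'=\bigsqcup_{d\mid b}\cA'_d$ with $\cA'_d:=\{b'/d:b'\in\cB\setminus\{b\},\,\gcd(b,b')=d\}$, a \emph{finite} partition indexed by divisors of $b$ satisfying $d\cdot\cA'_d\subseteq\cB$ by construction. Corollary (a) produces a $d$ with $\cA'_d$ Behrend, and then $d\cdot\cA'_d\subseteq\cB$ is the forbidden pattern, contradicting the hypothesis. The main technical hurdle is the gcd bookkeeping that identifies the remainder $\cM_\cB\setminus\cM_{\cB\setminus\{b\}}$ with a scaled set of free numbers for the \emph{quotient} family $\tilde\cA$ (resp.\ $\cA'$); once that identification is set up, Behrend's inequality does all the heavy lifting.
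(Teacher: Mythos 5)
Your proof is correct. Note that the paper does not actually prove this proposition --- it is quoted directly from Hall \cite[Corollary 0.19]{hall-book} --- so there is no in-paper argument to compare against; what you have written is essentially the standard derivation from Behrend's inequality, and it is sound. Both directions rest on the correct identification of the ``new multiples'' $\cM_\cB\setminus\cM_{\cB\setminus\{b\}}$ with a dilate of the non-multiples of the quotient family $\{b'/\gcd(b,b')\}$; this is the same gcd bookkeeping that the paper itself develops for the sets $\cB'(q)$ in Lemmas 2.2--2.4 en route to Theorem A, so your route is entirely consistent with the paper's toolkit. Two cosmetic remarks. First, the decomposition $\cA'=\bigcup_{d\mid b}\cA'_d$ need not be disjoint (the same element of $\cA'$ can arise from two $b'$ with different values of $\gcd(b,b')$), but your corollary (a) only requires a finite cover, so nothing is lost. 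Second, in the forward direction it is primitivity (which you have already deduced from tautness) that guarantees $1\notin\tilde\cA$, so that $\tilde\cA$ is literally a Behrend set in the paper's sense; you do not say this explicitly, but even if $1\in\tilde\cA$ the conclusion $\ddelta(\cF_{\tilde\cA})=0$ would hold trivially, so the argument goes through either way.
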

This motivates the next definition:

\begin{definition}\label{def:pre-taut}
A set $\cB\subseteq\N$ is \emph{pre-taut}, if there are no $	q\in\N$ and Behrend set $\cA\subseteq\N\setminus\{1\}$ such that $q\,\cA\subseteq\cB$.
\end{definition}

\begin{lemma}\label{lemma:taut-facts}
Let $\cB\subseteq\N$ and $c\in\N$.

\begin{compactenum}[a)]
\item If $c\,\cB$ is pre-taut, then also $\cB$ is pre-taut. Moreover,
$\cB$ is
taut if and only if $c\,\cB$ is taut.
\item Each subset of a (pre-)taut set is (pre-)taut.
\item A finite union of pre-taut sets is pre-taut.
\item If $\cB$ is taut, then $\cB=\{1\}$ or $d(\cM_\cB)\neq1$ (possibly non-existing). Equivalently, if $d(\cM_\cB)=1$, then $\cB=\{1\}$ or $\cB$ is not taut.
\item If $\cB$ is pre-taut, then $1\in\cB$ or $d(\cM_\cB)\neq1$ (possibly non-existing). Equivalently, if $d(\cM_\cB)=1$, then $1\in\cB$ or $\cB$ is not pre-taut.
\end{compactenum}
\end{lemma}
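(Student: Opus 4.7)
The plan is to unwind the definitions and to use the characterization cited just above that $\cB$ is taut iff it is both primitive and pre-taut. For part (a), I first observe that $\cM_{c\cB}=c\,\cM_{\cB}$ and that the substitution $k=cm$ in the defining sum of $\ddelta$ yields the scaling identity $\ddelta(c\,\cM)=\tfrac{1}{c}\,\ddelta(\cM)$. The same identity applies to $\cM_{c\cB\setminus\{cb\}}=c\,\cM_{\cB\setminus\{b\}}$, so the strict inequality in the definition of tautness for $\cB$ scales by the common factor $1/c$ and is equivalent to the corresponding one for $c\cB$ (primitivity is trivially preserved in both directions, since $cb\mid cb'$ iff $b\mid b'$). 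For the pre-taut half of (a), any witness $q\cA\subseteq\cB$ of non-pre-tautness yields $(cq)\cA\subseteq c\cB$, so pre-tautness of $c\cB$ forces pre-tautness of $\cB$.

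Part (b) is a one-line unwinding of the definitions: a witness $q\cA\subseteq\cB'$ of non-pre-tautness for a subset $\cB'\subseteq\cB$ is a fortiori a witness for $\cB$; and a subset of a primitive set is primitive, so combining with the characterization yields the taut statement.

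For part (c) I proceed by induction on the number of sets, reducing to $\cB=\cB_1\cup\cB_2$. Assume the union is not pre-taut and pick $q\in\N$ together with a Behrend $\cA\subseteq\N\setminus\{1\}$ with $q\cA\subseteq\cB_1\cup\cB_2$, then split $\cA=\cA_1\cup\cA_2$ by $\cA_i:=\{a\in\cA:qa\in\cB_i\}$. The only nontrivial ingredient, and the main obstacle, is the classical result that the family of non-Behrend sequences is closed under finite union, or equivalently that the Behrend property behaves like a filter with respect to union: if $\cA_1\cup\cA_2$ is Behrend then one of $\cA_1,\cA_2$ already is (available in Hall's monograph). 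Granting this, some $\cA_i$ is Behrend, and then $q\cA_i\subseteq\cB_i$ contradicts pre-tautness of $\cB_i$.

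Finally, for parts (d) and (e) I argue that if $d(\cM_\cB)=1$ then also $\ddelta(\cM_\cB)=\underline{d}(\cM_\cB)=1$, so that the assumption $1\notin\cB$ makes $\cB$ itself a Behrend subset of $\N\setminus\{1\}$; the trivial witness $q=1$, $\cA=\cB$ with $1\cdot\cB\subseteq\cB$ then contradicts pre-tautness, which proves (e). For (d), if moreover $\cB$ is primitive and $1\in\cB$, then $1\mid b$ for every $b\in\cB$ immediately forces $\cB=\{1\}$, so a taut $\cB$ must satisfy $\cB=\{1\}$ or $d(\cM_\cB)\neq 1$.
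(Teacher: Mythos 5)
Your proposal is correct and follows essentially the same route as the paper: part (a) via the scaling of density under multiplication by $c$ (the paper phrases it with $\underline{d}$ rather than $\ddelta$, equivalent by Davenport--Erd\"os), part (b) by unwinding definitions, part (c) via Hall's result that a finite union which is Behrend has a Behrend member (the paper cites \cite[Corollary 0.14]{hall-book} for exactly this), and parts (d), (e) via the Behrend witness $q=1$, $\cA=\cB$ together with primitivity. The paper even notes in a footnote that (d) follows from (e) and the taut $=$ pre-taut $+$ primitive characterization, which is precisely your derivation.
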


\begin{proof}
a)\;The first implication is obvious. It is also clear that $\cB$ is primitive if and only if $c\cB$ is primitive.  Moreover,
\begin{equation*}
\begin{split}
\cB\text{ is taut}
&\Leftrightarrow
\forall b\in\cB: \underline{d}(\cM_\cB)>\underline{d}(\cM_{\cB\setminus\{b\}})
\Leftrightarrow
\forall b\in\cB: c^{-1}\underline{d}(\cM_\cB)>c^{-1}\underline{d}(\cM_{\cB\setminus\{b\}})\\
&\Leftrightarrow
\forall b\in\cB: \underline{d}(\cM_{c\cB})>\underline{d}(\cM_{c(\cB\setminus\{b\})})
=\underline{d}(\cM_{c\,\cB\setminus\{cb\}})\\
&\Leftrightarrow
\forall b'\in c\,\cB: \underline{d}(\cM_{c\cB})>\underline{d}(\cM_{c\,\cB\setminus\{b'\}})\\
&\Leftrightarrow
c\,\cB\text{ is taut.}
\end{split}
\end{equation*}
{b) is obvious (see Remark 2.1)}.
\\
{c) follows from \cite[Corollary 0.14]{hall-book}, see also \cite[Proposition 2.33]{BKKL2015}}.
\\
d) Suppose that $\cB$ is taut. Then $\cB$ is primitive, and $d(\cM_\cB)\neq1$ unless $1\in\cB$ by \cite[Corollary 0.19]{hall-book}.
Hence $d(\cM_\cB)\neq1$ or $\cB=\{1\}$.\\
e) follows directly from Definition~\ref{def:pre-taut} \footnote{{Note that d) follows from e) and Remark 2.2}.}.
\end{proof}

\begin{remark}
$\cB$ is taut if and only if it is pre-taut and primitive. If $\cB$ is pre-taut, then $\prim{\cB}$ is taut in view of Lemma~\ref{lemma:taut-facts}b.
\end{remark}

For $q\in\N$ and $\cB\subseteq\Z$ let
\begin{equation*}
\cB'(q)=\left\{\frac{b}{\gcd(b,q)}:b\in\cB\right\},
\end{equation*}
and note that $1\in\cB'(q)$ if and only if $q\in\cM_\cB$.

\begin{lemma}\label{lemma:introductory}
Let $q\in\N$, $\cB,\cC\subseteq\Z$, and $q\,\cC\subseteq\cM_\cB$. Then
$\cM_\cC\subseteq\cM_{\cB'(q)}$.
\end{lemma}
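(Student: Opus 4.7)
The plan is to give a direct elementary divisibility argument, unwinding the definitions of $\cM_\cC$, $\cM_\cB$, and $\cB'(q)$. The only arithmetic fact used is that if two integers have greatest common divisor $d$, then after dividing both by $d$ they become coprime, so that Euclid's lemma can be applied.

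More concretely, I would pick an arbitrary element $m\in\cM_\cC$, so that $m=kc$ for some $c\in\cC$ and $k\in\Z$. The aim is to produce some element of $\cB'(q)$ dividing $m$. By the hypothesis $q\cC\subseteq\cM_\cB$, the integer $qc$ lies in $\cM_\cB$, so there exists $b\in\cB$ with $b\mid qc$. Set
\begin{equation*}
d:=\gcd(b,q),\qquad b_1:=b/d,\qquad q_1:=q/d,
\end{equation*}
so that $\gcd(b_1,q_1)=1$ and $b_1\in\cB'(q)$ by the definition of $\cB'(q)$.

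Next I would rewrite $b\mid qc$ as $db_1\mid dq_1c$, i.e. $b_1\mid q_1c$. Since $\gcd(b_1,q_1)=1$, Euclid's lemma yields $b_1\mid c$, and multiplying by $k$ gives $b_1\mid kc=m$. Thus $b_1\in\cB'(q)$ is a divisor of $m$, which by definition of $\cM_{\cB'(q)}$ means $m\in\cM_{\cB'(q)}$. As $m\in\cM_\cC$ was arbitrary, the inclusion $\cM_\cC\subseteq\cM_{\cB'(q)}$ follows.

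There is really no obstacle here; the whole content of the lemma is the observation that dividing $b$ by $\gcd(b,q)$ strips off exactly the factors of $b$ that might have been supplied by $q$, leaving a divisor of $c$ itself. The cleanest presentation is the two-line chain $b\mid qc\Rightarrow b/d\mid c\Rightarrow b/d\mid m$, and I would keep the proof at that level of brevity.
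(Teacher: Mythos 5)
Your proof is correct and is essentially the paper's own argument: both reduce to showing that $b/\gcd(b,q)$ divides $c$ by cancelling $d=\gcd(b,q)$ and invoking coprimality of $b/d$ and $q/d$ (the paper phrases this as ``$q\mid\ell b$ implies $q\mid\ell\gcd(b,q)$'' and exhibits the integer cofactor explicitly, while you apply Euclid's lemma directly, but these are the same computation). No gap.
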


\begin{proof}
Let $c\in \cC$. There are $\ell\in\Z$ and $b\in\cB$ such that $qc=\ell b$.
Since $q\mid\ell b$, it follows that $q\mid\ell \gcd(b,q)$, thus $k=\frac{\ell \gcd(b,q)}{q}$ is an integer. We have
\begin{equation*}
c=
\frac{\ell b}{q}
=
k\cdot\frac{b}{\gcd(b,q)}
\in\cM_{\cB'(q)}\ .
\end{equation*}
This shows that $\cC\subseteq\cM_{\cB'(q)}$ and hence also $\cM_\cC\subseteq\cM_{\cB'(q)}$.
\end{proof}

\begin{lemma}\label{lemma:q-taut}
Let $\cB\subseteq\N$ and $q\in\N$.
\begin{compactenum}[a)]
\item If $\cB$ is pre-taut, then  $\cB'(q)$ is pre-taut.
\item If $\cB$ is taut, then  $\cB'(q)$ is a finite disjoint union of taut sets $\cB_i'$ defined below in the proof {of a)}.
\item If $d(\cM_\cB)=1$, then $d(\cM_{\cB'(q)})=1$.

\item If $\cB=\bigcup_{i=1}^N\cC_i$ and if $d(\cM_\cB)=1$, then $d(\cM_{\cC_i})=1$ for at least one $i\in\{1,\dots,N\}$.
\end{compactenum}
\end{lemma}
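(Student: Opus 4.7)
The plan is to handle the four items in the order (d), (c), (a), (b), since (a) invokes (d) through a Behrend-set splitting argument and the partition needed for (b) is introduced inside the proof of (a). Part~(d) is the substantive step. If some $\cC_i$ contains~$1$ then $\cM_{\cC_i}=\Z$ and the claim is immediate; otherwise every $\cC_i\subseteq\N\setminus\{1\}$, and I would appeal to Behrend's multiplicative inequality
\[
1-\ddelta(\cM_{\cC_1\cup\cC_2})\geqslant(1-\ddelta(\cM_{\cC_1}))(1-\ddelta(\cM_{\cC_2})),
\]
iterated to an $N$-fold union (see \cite[Section~0]{hall-book}). By Davenport--Erd\"os, $d(\cM_\cB)=1$ is equivalent to $\ddelta(\cM_\cB)=1$, so the left-hand side vanishes and some $\ddelta(\cM_{\cC_i})$ must equal~$1$, i.e.\ $d(\cM_{\cC_i})=1$. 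Part~(c) is then immediate: the factorisation $b=\gcd(b,q)\cdot(b/\gcd(b,q))$ exhibits every $b\in\cB$ as a multiple of $b/\gcd(b,q)\in\cB'(q)$, so $\cM_\cB\subseteq\cM_{\cB'(q)}$, and monotonicity of lower density together with Davenport--Erd\"os transports $d(\cM_\cB)=1$ to $d(\cM_{\cB'(q)})=1$.

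For~(a) I would argue by contrapositive. Assume $q'\cA\subseteq\cB'(q)$ for some $q'\in\N$ and Behrend $\cA\subseteq\N\setminus\{1\}$. For each $a\in\cA$ pick $b_a\in\cB$ with $q'a=b_a/\gcd(b_a,q)$, set $d_a:=\gcd(b_a,q)$, and partition $\cA=\bigsqcup_{d\mid q}\cA_d$ according to $d_a=d$. Part~(d) supplies some $d_0\mid q$ with $\cA_{d_0}$ still Behrend; the identity $b_a=d_0 q' a$ on $\cA_{d_0}$ then yields $d_0 q'\cdot\cA_{d_0}\subseteq\cB$ with $\cA_{d_0}\subseteq\N\setminus\{1\}$ Behrend, contradicting pre-tautness of~$\cB$. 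The partition required for~(b) arises here: for each divisor $d\mid q$ set $\cB_d:=\{b\in\cB:\gcd(b,q)=d\}$ and $\cB_d':=\cB_d/d$.

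For~(b), $\{\cB_d\}_{d\mid q}$ is a finite partition of $\cB$, so each $\cB_d$ is taut by Lemma~\ref{lemma:taut-facts}(b) and then each $\cB_d'$ is taut by Lemma~\ref{lemma:taut-facts}(a). By construction $\cB'(q)=\bigcup_{d\mid q}\cB_d'$; to obtain a literal disjoint union, fix an ordering on the divisors of $q$ and replace each $\cB_d'$ by $\cB_d'\setminus\bigcup_{d'<d}\cB_{d'}'$, which remains taut as a subset of a taut set by Lemma~\ref{lemma:taut-facts}(b). The only non-routine step in the whole argument is Behrend's inequality in~(d); everything else is $\gcd$-bookkeeping combined with Lemma~\ref{lemma:taut-facts}.
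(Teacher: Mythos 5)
Your proposal is correct and follows essentially the same route as the paper: the same finite decomposition of $\cB$ according to the value of $\gcd(b,q)$ (the paper indexes by $i=q/\gcd(b,q)$, which is equivalent), the same reduction of (c) to the inclusion $\cB\subseteq\cM_{\cB'(q)}$, and the same key input for (d) — Behrend's inequality is exactly what underlies the paper's citation of \cite[Corollary 0.14]{hall-book} — while your part (a) is just the contrapositive of the paper's forward argument via Lemma~\ref{lemma:taut-facts}c. One small point in your favour: the sets $\cB_i'$ need not actually be pairwise disjoint (e.g.\ $\cB=\{10,15\}$ and $q=6$ give $\cB_2'=\cB_3'=\{5\}$), so your explicit disjointification in (b), harmless because subsets of taut sets are taut by Lemma~\ref{lemma:taut-facts}b, repairs a detail the paper glosses over.
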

\begin{proof}
Let $I:=\left\{\frac{q}{\gcd(b,q)}:b\in\cB\right\}$. For $i\in I$
denote $\cB_i:=\left\{b\in\cB:\frac{q}{\gcd(b,q)}=i\right\}$ and
$\cB_i':=\left\{\frac{b}{\gcd(b,q)}: b\in\cB_i\right\}$. Then $I$ is finite, $\cB=\bigcup_{i\in I}\cB_i$, and $\cB'(q)=\bigcup_{i\in I}\cB_i'$. Moreover,
$\cB_i=\left\{\frac{q}{i}b':b'\in\cB_i'\right\}=\frac{q}{i}\cB_i'$.\\

a)\; If $\cB$ is pre-taut, then all $\cB_i$ are pre-taut (Lemma~\ref{lemma:taut-facts}b), then all
$\cB_i'$ are pre-taut (Lemma~\ref{lemma:taut-facts}a), and then $\cB'(q)$ is pre-taut (Lemma~\ref{lemma:taut-facts}c).\\
b)\;If $\cB$ is taut, then all $\cB_i$ are taut (Lemma~\ref{lemma:taut-facts}b), and then all
$\cB_i'$ are taut (Lemma~\ref{lemma:taut-facts}a).\\
c)\;As $\cB\subseteq\cM_{\cB'(q)}$, we have also $\cM_\cB\subseteq\cM_{\cB'(q)}$.\\
d)\;If $\cB$ is Behrend, then at least one of the sets $\cC_i$ is Behrend \cite[Corollary 0.14]{hall-book}, and so ${d(\cM_{\cC_i})=1}$.
Otherwise $1\in\cB$, so that $1\in\cC_i$ for some $i$, whence $\cM_{\cC_i}=\Z$.
\end{proof}

\begin{lemma}\label{lemma:gen_of_4.25}(compare \cite[Proposition 4.25]{BKKL2015})
Assume that $\cB\subseteq\N$ is taut and $d(\cM_\cC)=1$ for some $\cC\subseteq\Z$. If
$q\,\cC\subseteq\cM_\cB$ for some $q\geqslant1$,
then $b\mid q$ for some $b\in\cB$.
\end{lemma}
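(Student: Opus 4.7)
The plan is to assemble the lemmas from the preceding subsection in a direct chain, using the decomposition of $\cB'(q)$ into taut pieces together with the density-1 rigidity for taut sets.

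First, I would invoke Lemma~\ref{lemma:introductory} on the given inclusion $q\cC\subseteq\cM_\cB$ to obtain $\cM_\cC\subseteq\cM_{\cB'(q)}$. Since by hypothesis $d(\cM_\cC)=1$, this monotonicity immediately yields $d(\cM_{\cB'(q)})=1$. (Alternatively, this is the content of Lemma~\ref{lemma:q-taut}c.)

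Next, I would apply Lemma~\ref{lemma:q-taut}b: because $\cB$ is taut, we may write $\cB'(q)=\bigcup_{i\in I}\cB_i'$ as a finite union of taut sets $\cB_i'$ (defined via the partition $\cB=\bigcup_{i\in I}\cB_i$ from the proof of that lemma). Correspondingly, $\cM_{\cB'(q)}=\bigcup_{i\in I}\cM_{\cB_i'}$ is a finite union, so Lemma~\ref{lemma:q-taut}d (applied to the sets $\cM_{\cB_i'}$) produces some index $i_0\in I$ with $d(\cM_{\cB_{i_0}'})=1$.

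The final and conceptually decisive step is Lemma~\ref{lemma:taut-facts}d: a taut set whose set of multiples has density $1$ must be $\{1\}$. Hence $\cB_{i_0}'=\{1\}$, which unwinds to the existence of some $b\in\cB_{i_0}\subseteq\cB$ with $b/\gcd(b,q)=1$, i.e., $b\mid q$, as required.

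There is no real obstacle once the preparatory lemmas are in place; the only point that needs mild care is checking that the decomposition $\cB'(q)=\bigsqcup_i\cB_i'$ inherited from Lemma~\ref{lemma:q-taut} really does transport the density-1 property into a \emph{taut} piece, so that Lemma~\ref{lemma:taut-facts}d can be triggered. This is precisely what Lemma~\ref{lemma:q-taut}b and~d are designed for, and this lemma should be viewed as the combined payoff of those two statements.
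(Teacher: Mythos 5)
Your proposal is correct and follows essentially the same route as the paper's proof: Lemma~\ref{lemma:introductory} gives $d(\cM_{\cB'(q)})=1$, Lemma~\ref{lemma:q-taut}b) and d) isolate a taut piece $\cB_{i_0}'$ with $d(\cM_{\cB_{i_0}'})=1$, and Lemma~\ref{lemma:taut-facts}d) forces $\cB_{i_0}'=\{1\}$, i.e.\ $b\mid q$ for some $b\in\cB$. In fact you streamline the paper's argument slightly by omitting its (logically dispensable) intermediate discussion of whether $\cB'(q)$ itself is taut or equal to $\{1\}$.
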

\begin{proof}
By Lemma~\ref{lemma:introductory}, $\cM_\cC\subseteq\cM_{\cB'(q)}$, so that $d(\cM_{\cB'(q)})=1$.
Then $\cB'(q)=\{1\}$ or $\cB'(q)$ is not taut (Lemma~\ref{lemma:taut-facts}d). If $\cB'(q)=\{1\}$, then $\gcd(b,q)=b$ for all $b\in\cB$, i.e $b\mid q$ for all $b\in\cB$, which is impossible because $\cB$ is infinite. Hence $\cB'(q)$ is not taut.
On the other hand, as $\cB$ is taut by assumption, $\cB'(q)$ is a finite union of taut sets $\cB_i'$ (Lemma~\ref{lemma:q-taut}b). As $d(\cM_{\cB'(q)})=1$, also $d(\cM_{\cB_i'})=1$ for at least one of the sets $\cB_i'$ (Lemma~\ref{lemma:q-taut}d), so that $\cB_i'=\{1\}$ for this set (Lemma~\ref{lemma:taut-facts}d). This implies $q\in\cM_\cB$.
\end{proof}

Recall that the topology on $H$ is generated by the (open and closed) cylinder sets
\begin{equation*}
U_S(h):=\{h'\in H: \forall b\in S: h_b=h'_b\},\text{ defined for finite }S\subset\cB\text{ and }h\in H\ ,
\end{equation*}
and recall also the definition of $\cA_S:=\{\gcd(b,\lcm(S)):b\in\cB\}$. Note that $\cA_S$ is finite and  $S\subseteq\cA_\cS$.
\begin{lemma}\label{lemma:easy}
Let $U=U_S(\Delta(n))$ for some $S\subset\cB$ and $n\in\Z$.
\begin{compactenum}[a)]
\item If $n\in\cM_S$, then $U\cap W=\emptyset$.
\item If $U\cap W=\emptyset$, then $n+\lcm(S)\cdot\Z\subseteq \cM_{\cB\cap\cA_S}$.
\item There is a filtration of $\cB$ by finite sets $S$ for which $\cB\cap\cA_S= S$.
\item If $\cB\cap\cA_S= S$, then $n\in\cM_S$ iff $U\cap W=\emptyset$
iff $n+\lcm(S)\cdot\Z\subseteq\cM_S$.
\end{compactenum}
\end{lemma}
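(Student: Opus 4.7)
The plan is to treat (a), (c), and (d) by short direct arguments and to invest the real work in (b), where I would combine a compactness reformulation with a Dirichlet-style trick.

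For (a) I would just unpack the definitions: if some $b\in S$ satisfies $b\mid n$ then $\Delta(n)_b=0$, and every $h\in U=U_S(\Delta(n))$ agrees with $\Delta(n)$ on $S$, so $h_b=0$ and $h\notin W$.

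For (b), the first step is to observe that $W=\bigcap_{b\in\cB}H_b$ with $H_b:=\{h\in H:h_b\neq 0\}$ clopen, and that $U$ is clopen too. Compactness of $H$ then yields a finite $T\subseteq\cB$ with $S\subseteq T$ and $U\cap\bigcap_{b\in T}H_b=\emptyset$. Density of $\Delta(\Z)$ in $H$ together with clopenness reduces this to the purely arithmetic assertion
\begin{equation*}
n+\lcm(S)\cdot\Z\;\subseteq\;\cM_T.
\end{equation*}
The key lemma I then need is: \emph{if $m+\ell\Z\subseteq\cM_T$ for a finite set $T\subseteq\N$, then some $b\in T$ divides $g:=\gcd(m,\ell)$.} To prove it I would write $m=gm'$, $\ell=g\ell'$ with $\gcd(m',\ell')=1$, and invoke Dirichlet's theorem on primes in arithmetic progressions to produce a prime $p>\max T$ with $p\equiv m'\pmod{\ell'}$. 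The integer $gp$ then lies in $m+\ell\Z\subseteq\cM_T$, so some $b\in T$ divides $gp$; since $p>b$ is prime, coprimality forces $b\mid g$. Applying this with $m=n$ and $\ell=\lcm(S)$ delivers $b\in T\subseteq\cB$ with $b\mid\lcm(S)$ (hence $b\in\cB\cap\cA_S$) and $b\mid n$, so $b$ divides every element of $n+\lcm(S)\Z$ and the inclusion $n+\lcm(S)\Z\subseteq b\Z\subseteq\cM_{\cB\cap\cA_S}$ follows.

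The main obstacle is precisely this arithmetic lemma, since finite covering systems of $\Z$ by cosets of proper subgroups do exist in general (Erd\"os), so no soft density or inclusion--exclusion argument can work; what saves us is that all the ``bad'' cosets of $k$ come from the single congruence $\ell k\equiv-m\pmod{b}$, and Dirichlet produces an element of $m+\ell\Z$ whose only small prime divisors lie inside $g$.

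For (c), enumerate $\cB=\{b_1,b_2,\dots\}$ and set $S_k:=\{b\in\cB:b\mid\lcm(b_1,\dots,b_k)\}$: this is finite (only finitely many divisors of a fixed integer), increasing, and exhausts $\cB$; by construction $\lcm(S_k)=\lcm(b_1,\dots,b_k)$, so $\cB\cap\cA_{S_k}=\{b\in\cB:b\mid\lcm(S_k)\}=S_k$. Finally, (d) follows by chasing the three implications around: $n\in\cM_S\Rightarrow U\cap W=\emptyset$ is (a); $U\cap W=\emptyset\Rightarrow n+\lcm(S)\Z\subseteq\cM_{\cB\cap\cA_S}=\cM_S$ is (b) together with the hypothesis $\cB\cap\cA_S=S$; and the last implication is trivial since $n\in n+\lcm(S)\Z$.
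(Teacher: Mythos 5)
Your proposal is correct, and parts (a), (c), (d) coincide with the paper's arguments (the paper's filtration in (c) is built from $S':=\cB\cap\cA_S$, which is exactly your $\{b\in\cB: b\mid\lcm(S)\}$). The interesting divergence is in (b). Both you and the paper first use compactness of $U$ to cover it by finitely many sets $\{h:h_b=0\}$, $b\in T$, and both ultimately need the same arithmetic fact: if $n+\lcm(S)\Z\subseteq\cM_T$ for a finite $T$, then some $b\in T$ divides $\gcd(n,\lcm(S))$. The paper obtains this by routing through its tautness machinery: it primitivizes $T$, notes that a finite primitive set is taut, and invokes Lemma~\ref{lemma:gen_of_4.25} (which rests on Lemmas~\ref{lemma:taut-facts} and~\ref{lemma:q-taut} and, further back, on Behrend's theorem), with Dirichlet entering only to certify $d(\cM_\cC)=1$ for the rescaled progression. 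You instead prove the needed divisibility directly: writing $g=\gcd(n,\lcm(S))$, Dirichlet supplies a prime $p>\max T$ in the reduced progression, the element $gp$ of $n+\lcm(S)\Z$ must be divisible by some $b\in T$, and since $p>b$ is prime this forces $b\mid g$. This is a clean, self-contained replacement for the special case of Lemma~\ref{lemma:gen_of_4.25} actually used here (a finite set of moduli); it buys independence from the Behrend/tautness apparatus at the cost of not reusing machinery the paper needs anyway for Theorem~\ref{theo:taut-regular}. Your remark that no soft density argument can work because of Erd\H{o}s-type covering systems correctly identifies why some form of Dirichlet is unavoidable. All edge cases in your key lemma (e.g.\ $\ell'=1$, or $1\in T$) check out, so I see no gap.
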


\begin{proof}
a)\; This follows immediately from the definitions of $U_S(\Delta(n))$ and $W$.\\
b)\; For each $h\in U$ there is $b\in\cB$ such that $h_b=0$.
As $U$ is compact, the Heine-Borel argument produces
a finite set $S'\subset\cB$ such that for each $h\in U$
there is $b\in S'$ such that $h_b=0$.
Let $s=\lcm(S)$. This observation applies in particular to all $h\in\Delta(n+s\Z)\subseteq U_{S}(\Delta(n))=U$. That means, for each $k\in\Z$ there is $b_k\in S'$ such that
$b_k\mid n+sk$. In other words: $n+s\Z\subset\cM_{S'}$.
The set $S'$ need not be primitive automatically, but we can replace it w.l.o.g. by a primitive subset without changing its set of multiples. Then,
as $S'$ is finite, it is taut. Denote $q=\gcd(n,s)$ and $\cC=\frac{n}{q}+\frac{s}{q}\cdot\Z$. Then $q\,\cC=n+s\Z\subseteq\cM_{S'}$, and as
$\gcd(n/q,s/q)=1$, $d(\cM_\cC)=1$ (Dirichlet, see \cite[Corollary 4.24]{BKKL2015}).
Now Lemma~\ref{lemma:gen_of_4.25} shows that $b\mid q=\gcd(n,s)$ for some $b\in S'$. In particular, $n\in b\Z$
and $b\mid s=\lcm(S)$ for that $b\in S'$, so that $b=\gcd(b,s)\in\cB\cap\cA_S$ and
$n+\lcm(S)\cdot\Z\subseteq b\Z\subseteq\cM_{\cB\cap\cA_S}$.\\
c)\; It suffices to prove that for any finite $S\subset\cB$ there exists a finite $S'\subset\cB$ with $\cB\cap\cA_{S'}=S'$. So let $S\subset\cB$ and $S':=\cB\cap\cA_S$. $S'$ is finite, because $\cA_S$ is finite, and obviously $S\subseteq S'\subseteq\cB\cap\cA_{S'}$. As each $b'\in S'\subseteq\cA_S$ divides $\lcm(S)$, also $\lcm(S')$ divides $\lcm(S)$. Therefore $\lcm(S')=\lcm(S)$, so that  $\cA_{S'}=\cA_S$.
Hence $S'=\cB\cap\cA_{S'}$.
\\
d)\; This follows from a) and b).
\end{proof}

\subsection{Proof of Theorem~\ref{theo:taut-regular}}
Let $\cB=\{b_1,b_2,\dots\}$ be primitive, and denote $S_1\subset S_2\subset\dots\subset\cB$ a filtration of $\cB$ by finite sets $S_k$.
Let $s_k=\lcm(S_k)$.
We can assume without loss of generality that $b\mid s_k\Rightarrow b\in S_k$ holds for all $b\in\cB$ and all $k\in\N$.
For each $k\in\N$, the collection of all cylinder sets $U_{S_k}(h)$, $h\in H$, can be written explicitly as
\begin{equation*}
\cZ_k:=\left\{U_{S_k}(\Delta(n)): n=1,\dots,s_k\right\}.
\end{equation*}

Suppose first that $\cB$ is not taut. Then it contains a scaled copy $c\cA$ of a Behrend set $\cA\subseteq\{2,3,\dots\}$.
Enlarging $\cA$, if necessary, we can assume that $c\cA=\cB\cap c\Z$. (As $\cB$ is primitive, also the enlarged $\cA$ does not contain the number $1$.)
 Let $a_0>1$ be the smallest element of $\cA$ and denote $b_0=ca_0$. Let $H_0=\{h\in H:h_{b_0}\in c\Z\}$. Then $H_0$ is open and closed, and we will show that $H_0\cap W\neq\emptyset$ but $m_H(H_0\cap W)=0$, so that $W$ is not Haar regular.

First observe that $(\Delta(c))_{b_0}=c\in c\Z$, so that $\Delta(c)\in H_0$. Suppose for a contradiction that $H_0\cap W=\emptyset$. Then $\Delta(c)\not\in W$, i.e. there is $b\in\cB$ such that $c\in b\Z$. Hence $c\cA\subseteq b\Z$, so that $c\cA=\{b\}$, because $b\in\cB$ and  $\cB$ is primitive. Hence $b=ca_0=b_0$, so that $\cA=\{a_0\}$, a contradiction, as $\cA$ is Behrend.

We turn to the proof of $m_H(H_0\cap W)=0$.
Let $\cH_W^\ell=\{n\in\{0,\dots,s_\ell-1\}: U_{S_\ell}(\Delta(n))\cap H_0\cap W\neq\emptyset\}$. It suffices to show that $\sum_{n\in\cH_W^\ell}m_H(U_{S_\ell}(\Delta(n))\to0$ as $\ell\to\infty$. As all cylinder sets $U_{S_\ell}(\Delta(n))$ have identical Haar measure $s_\ell^{-1}$, this is  equivalent to $\#\cH_W^\ell/s_\ell\to0$ as $\ell\to\infty$. So let $\ell$ be so large that $b_0\in S_\ell$.
Denote $\cA^\ell=\{a\in\cA:ca\mid s_\ell\}$.
As $c\cA\subseteq\cB$, the sequence $(\cA^\ell)_\ell$ is increasing and exhausts the set $\cA$.

If $n\in\cH_W^\ell$, then $n\in c\Z$ and, by Lemma~\ref{lemma:easy}a,  $n\in\cF_{S_\ell}$. Hence $n=cn'\in\cF_{S_\ell}$ for some $n'\in\Z$. Suppose for a contradiction that $n'\in\cM_{\cA^\ell}$, i.e. there are $k\in\Z$ and $a\in\cA_\ell$ such that $n'=ka$. Then $n=kca$, where $ca\in\cB$ and $ca\mid s_\ell$, so that $ca\in S_\ell$, which contradicts $n\in\cF_{S_\ell}$. Hence $n'\in\cF_{\cA^\ell}$ so that $n\in c\cF_{\cA^\ell}=c\,(\Z\setminus\cM_{\cA^\ell})$. As $\cA$ is Behrend, $\bar{d}(\Z\setminus\cM_{\cA^\ell})\to0$ as $\ell\to\infty$. Hence
\begin{equation*}
\#\cH_W^\ell/s_\ell
\leqslant
\#\left(c(\Z\setminus\cM_{\cA^\ell})\cap[0,s_l)\right)/s_\ell
\le
\#\left((\Z\setminus\cM_{\cA^\ell})\cap[0,s_l)\right)/s_\ell
=
d(\Z\setminus\cM_{\cA^\ell})
\to0\ .
\end{equation*}

Suppose now that $\cB$ is taut.
We must show that for any $k\in\N$ and $U\in\cZ_k$
\begin{equation*}
U\cap W=\emptyset\quad\text{or}\quad m_H(U\cap W)>0\ .
\end{equation*}
So fix some $U=U_{S_k}(\Delta(n))$ such that $m_H(U\cap W)=0$. We have to show that $U\cap W=\emptyset$. Observe first that $U_{S_k}(\Delta(m))=U$ if and only if $m\in s_k\Z+n$.
For $\ell>k$ let
\begin{equation*}
\cG_\ell:=(s_k\Z+n)\cap
\left\{m\in\Z:
U_{S_\ell}(\Delta(m))\cap W=\emptyset\right\}
=
(s_k\Z+n)\cap \cM_{S_\ell},
\end{equation*}
where we used Lemma~\ref{lemma:easy}c for the last equality.
Observe that
\begin{equation*}
\cG_\ell
=
\cG_\ell+s_\ell\Z
=
\left(\cG_\ell\cap[0,s_\ell)\right)+s_\ell\Z\ .
\end{equation*}
Hence,
for each $\ell>k$,
\begin{equation*}
\begin{split}
\underline{d}\left((s_k\Z+n)\cap\cM_\cB\right)
&=
\liminf_{t\to\infty}\frac{\# \left(
(s_k\Z+n)\cap \cM_\cB\cap[0,t)
\right)}{t}
\geqslant
\liminf_{t\to\infty}\frac{\# \left(
(s_k\Z+n)\cap \cM_{S_\ell}\cap[0,t)
\right)}{t}\\
&=
\liminf_{t\to\infty}\frac{\# \left(\cG_\ell\cap[0,t)\right)}{t}
=
\frac{\# \left(\cG_\ell\cap[0,s_\ell)\right)}{s_\ell}.
\end{split}
\end{equation*}
As all $U'\in\cZ_\ell$ have identical Haar measure $m_H(U')=s_\ell^{-1}$ and as $m_H(U\setminus W)=m_H(U)$ by assumption, it follows that
\begin{equation*}
\begin{split}
\underline{d}\left((s_k\Z+n)\cap\cM_\cB\right)
&\geqslant
\limsup_{\ell\to\infty}\frac{\# \left(\cG_\ell\cap[0,s_\ell)\right)}{s_\ell}
=
\limsup_{\ell\to\infty}m_H\left(\bigcup_{U'\in\cZ_\ell,U'\subseteq U\setminus W}U'\right)\\
&=
m_H(U\setminus W)
=m_H(U)
=s_k^{-1}
=
d(s_k\Z+n)\ ,
\end{split}
\end{equation*}
so that
\begin{equation*}
d\left((s_k\Z+n)\cap\cM_\cB\right)=d(s_k\Z+n)\ .
\end{equation*}

Let $q=\gcd(s_k,n)$, $a'=s_k/q$ and $r'=n/q$. Then $\gcd(a',r')=1$ and
$q\Z\cap\cM_\cB=q\Z\cap\cM_{q\cdot\cB'(q)}$, in particular $(s_k\Z+n)\cap\cM_\cB=(s_k\Z+n)\cap\cM_{q\cdot\cB'(q)}$. Hence
\begin{equation*}
\begin{split}
d\left((a'\Z+r')\cap\cM_{\cB'(q)}\right)
&=
q\cdot d\left(q\cdot\left((a'\Z+r')\cap\cM_{\cB'(q)}\right)\right)
=
q\cdot d\left((s_k\Z+n)\cap \cM_{q\cdot\cB'(q)}\right)\\
&=
q\cdot d\left((s_k\Z+n)\cap \cM_{\cB}\right)
=
q\cdot d(s_k\Z+n)
=
q\cdot d\left(q(a'\Z+r')\right)\\
&=
d(a'\Z+r')
=1/a'\ .
\end{split}
\end{equation*}
In view of Lemma 1.17 in \cite{hall-book}, this suffices to conclude that
$\cB'(q)$ is Behrend.

On the other hand, as $\cB$ is taut,
$\cB'(q)$ is pre-taut (Lemma~\ref{lemma:q-taut}), so that $1\in\cB'(q)$ or $\cB'(q)$ is not Behrend (Lemma~\ref{lemma:taut-facts}e). Hence $1\in\cB'(q)$.
This implies
$q\in\cM_\cB$, which in turn implies
$U\cap W=U_{S_k}(\Delta(n))\cap W=\emptyset$ (the property to be proved):
Indeed, if $q\in\cM_\cB$, then there is some $b\in\cB$ with $b\mid q$, and as $q\mid s_k$, this implies $b\mid s_k$, so that $b\in S_k$. From $b\mid q\mid n$ we then conclude that $n\in\cM_{S_k}$, and Lemma~\ref{lemma:easy}a
implies $U_{S_k}(\Delta(n))\cap W=\emptyset$.

It remains to show that the implication $(i)$ $\Rightarrow$ $(iii)$ follows from Proposition~\ref{prop:mariusz-stronger}, which will be proved in the next subsection. So let $h\in W$.
By the proposition there exists $n\in\cF_\cB$ such that $\Delta(n)\in U_S(h)$, hence $\Delta(n)\in U_S(h)\cap(\Delta(\Z)\cap W)$. As this holds for all finite $S\subset\cB$, this proves the claim.

\subsection{Tautification of the set $\cB$ and regularization of the window $W$}
In \cite[Section 4.2]{BKKL2015} the authors provide a construction that associates to each (non-taut) set $\cB$ a taut set $\cB'$ such that
$\cF_{\cB'}\subseteq\cF_\cB$ but $\overline{d}(\cF_\cB\setminus\cF_{\cB'})=0$, and such that the two Mirsky measures
$\nu_\eta$ and $\nu_{\eta'}$
determined by $\cB$ and $\cB'$ coincide.
$\cB$ and $\cB'$ determine groups $H$ resp. $H'$ with windows $W$ resp. $W'$, and while the window $W$ is not Haar regular (if $\cB$ is non-taut), the window $W'$ is Haar regular because of Theorem~\ref{theo:taut-regular}.

On the abstract level one can also pass
from the window $W\subseteq H$ to its \emph{Haar regularization} $W_{reg}:=\supp(m_H|_W)$ (introduced in \cite{KR2016}), which also determines the same Mirsky measure on $\{0,1\}^\Z$. However, $W_{reg}$ will not be a window of the particular arithmetic type defined in \eqref{eq:W}, in particular it need not be aperiodic. The construction of $\cB'$ given $\cB$ in \cite{BKKL2015} suggests an obvious factor map $f:H\to H'$, and we expect that also $f(W_{reg})=W'$, so that in this sense the regularization of $W$ and the tautification of $\cB$ are two sides of the same medal.

The following example illustrates this discussion.

\begin{example}\label{ex:ape1}
Let $\cP=\{p_1,p_2,\ldots\}$ denote the set of primes. Let $\cB:=\bigcup_{i\geq1}p_i^2(\cP\setminus\{p_i\})$. Note that $\cB$ is primitive. It is not taut, because it contains rescalings of  Behrend sets. The corresponding taut set is $\cB'=\{p_i^2:\:i\geq1\}$, which generates  the square-free system.~\footnote{Note that $\eta(n)=1$  at all square-free numbers and also at $p_i^k$ for $i\geq1$ and $k\geq2$.}
\end{example}

\subsection{The property $\overline{\Delta(\Z)\cap W}=W$}\label{subsec:property(iii)}

\begin{proof}[Proof of Proposition~\ref{prop:mariusz-stronger}]
Given $h\in W$, we need to show that for each finite $S\subset\cB$
the set
\begin{equation*}
\cL_S(h)
:=
\left\{n\in\cF_\cB:\text{ $h_b=n$ mod~$b$ for each $b\in S$}\right\}
\end{equation*}
has asymptotic density $m_H(U_S(h)\cap W)>0$.

By Theorem~\ref{theo:taut-regular}, the tautness assumption on $\cB$ implies that $W$ is Haar regular, so that indeed
\begin{equation*}
m_H(U_S(h)\cap W)>0\ .
\end{equation*}

Let $\cB=\{b_1,b_2,\ldots\}$ and, for $K\geq 1$,  $W_K:=\{g\in H:\:g_i\neq0\text{ for }i=1,\ldots,K\}$. Then $W_K$ is clopen and $W\subseteq W_K$. Moreover, $W_{K}\supseteq W_{K+1}$ and $\bigcap_KW_K=W$. Fix $\varepsilon>0$. We now choose $K\geq1$ so that
\begin{equation}\label{tcrt1}
m_H(W_K\setminus W)<\varepsilon\ .
\end{equation}
Since $U_S(h)\cap W_K$ is clopen (and $T$ is strictly ergodic)
\begin{equation}\label{tcrt2}
\left|\frac1N\sum_{n\leq N}\1_{U_S(h)\cap W_K}(T^n0)-m_H\left(U_S(h)\cap W_K\right)\right|<\varepsilon
\end{equation}
for all $N\geq N_0$. Moreover, we can choose $N_1$ so that for $N\geq N_1$, we also have
\begin{equation}\label{tcrt3}
\left|\frac1N\sum_{n\leq N}\1_{U_S(h)\cap W_K}(T^n0)-\frac1N\sum_{n\leq N}\1_{U_S(h)\cap W}(T^n0)\right|<\varepsilon\ .
\end{equation}
Indeed, if
\begin{equation*}
T^n0=\Delta(n)\in  \left(U_S(h)\cap W_K\right)\setminus \left(U_S(h)\cap W\right)\subset W_K\setminus W\ ,
\end{equation*}
then (by setting $\cB_K=\{b_1,\ldots,b_K\}$), we have
\begin{equation*}
n\in \cF_{\cB_K}\cap \cM_{\cB}=\cM_{\cB}\setminus\cM_{\cB_K}\ .
\end{equation*}
Therefore, by the Davenport-Erd\"os theorem \cite[Eq.~(0.67)]{hall-book}, we can choose first $K\geq1$ sufficiently large so that  $\overline{d}(\cM_{\cB}\setminus\cM_{\cB_K})<\varepsilon$ and then $N_1$ so that
\begin{equation*}
\frac1N\sum_{n\leq N}\1_{W_K\setminus W}(T^n0)
=
\frac1N\sum_{n\leq N}\1_{\cM_{\cB}\setminus\cM_{\cB_K}}(n)<\varepsilon
\end{equation*}
for all $N\geq N_1$, so in particular \eqref{tcrt3} holds.
In view of \eqref{tcrt1}, \eqref{tcrt2}  and \eqref{tcrt3}, it follows that
\begin{equation*}
\lim_{N\to\infty}\frac1N\sum_{n\leq N}\1_{U_S(h)\cap W}(T^n0)=m_H(U_S(h)\cap W)\ .
\end{equation*}
As $T^n0=\Delta(n)\in U_S(h)\cap W$ if and only if $n\in\cL_S(h)$, this finishes the proof.
\end{proof}

\begin{example}
($\overline{\Delta(\Z)\cap W}=W$ does not imply tautness)\\
\label{example:non(iii)to(i)}
Suppose that $(m_k,r_k)$, ${k\in\N}$, is an enumeration of all coprime pairs of natural numbers. For any $k$ choose a prime $p_k\in r_k+m_k\Z$ such that $p_k>2^{k+1}$. Let $\cB=\cP\setminus\{p_k:k\in\N\}$. Clearly $\cB$ is primitive, and $\cM_{\{p_k:k\in\N\} }$ has upper density less than or equal to $\sum_{k=1}^{\infty}1/2^{k+1}=1/2$. Thus $d(\cM_{\cB})=1$ and $\cB$ is not taut \cite[Corollary 0.14]{hall-book}.
But $\overline{\Delta(\Z)\cap W}=W$. Indeed, let $h=(h_b)_{b\in\cB}\in W$ and take any finite set $S\subset \cB$. We are going to show that $U_S(h)\cap W\cap \Delta(\Z)\neq \emptyset$.  Let $n\in\Z$ be such that $n= h_b$ mod $b$ for $b\in S$. Since $h\in W$, $b$ does not divide $n$ for any $b\in S$, i.e. $\lcm(S)$ and $n$ are coprime. Then $(\lcm(S),n)=(m_k,r_k)$ for some $k$, and the prime number $p_k$ belongs to arithmetic progression $r_k+m_k\Z=n+\lcm(S)\Z$, in other words $\Delta(p_k)\in U_S(\Delta(n))=U_S(h)$.
Finally, $\Delta(p_k)\in W$, because the prime number $p_k$ does not belong to $\cB$ and hence also not to $\cM_\cB$.
\end{example}

\subsection{$X_\eta$ and $X_\varphi$}
\label{subsec:light-tails}
The set $\cB\subseteq\N$ has \emph{light tails}, if
\begin{equation}
\lim_{K\to\infty}\overline{d}\left(\cM_{\{b\in\cB:b>K\}}\right)=0\ .
\end{equation}
If $\cB$ has light tails, then $\cB$ is taut, but the converse doses not hold \cite[Section 4.3]{BKKL2015}. Here we prove:
\begin{proposition}\label{prop:Mariusz4.2} If $\cB$ has light tails, then $X_\eta= X_\varphi$.
\end{proposition}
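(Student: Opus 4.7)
The inclusion $X_\eta\subseteq X_\varphi$ is immediate because $\Delta(\Z)\subseteq H$ and $\varphi$ intertwines the shift with $R_{\Delta(1)}$. The plan is to establish the reverse inclusion by showing $\varphi(h)\in X_\eta$ for every $h\in H$; equivalently, given $h\in H$ and $N\in\N$, to find $n\in\Z$ with $\sigma^n\eta(m)=\varphi(h)(m)$ for all $m\in[-N,N]$.

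I would first introduce the clopen approximations $W_K:=\{g\in H:g_b\neq 0\ \forall b\in\cB_K\}$ with $\cB_K:=\cB\cap[1,K]$, which decrease to $W=\bigcap_KW_K$. Write $[-N,N]=A\sqcup B$ with $A=\{m:\varphi(h)(m)=1\}$. Because $B$ is finite, one can fix $K_0$ large enough that for each $m\in B$ some $b_m\in\cB_{K_0}$ satisfies $h_{b_m}+m\equiv0\pmod{b_m}$. Picking $n_0$ with $\Delta(n_0)\in U_{\cB_{K_0}}(h)$ and setting $\cR:=n_0+s_{K_0}\Z$ with $s_{K_0}:=\lcm\cB_{K_0}$, every $n\in\cR$ automatically gives $\sigma^n\eta(m)=0=\varphi(h)(m)$ for $m\in B$, while on $A$ one has $\sigma^n\eta(m)=1$ iff $n+m\in\cF_{\cB^{K_0}}$, where $\cB^{K_0}:=\cB\setminus\cB_{K_0}$.

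It remains to find a single $n\in\cR$ that simultaneously avoids $\cM_{\cB^{K_0}}-m$ for every $m\in A$. Since light tails implies tautness of $\cB$, Proposition~\ref{prop:mariusz-stronger} applied to each $h+\Delta(m)\in W$, $m\in A$, gives that $\{n\in\cR:n+m\in\cF_\cB\}$ has asymptotic density $m_H(U_{\cB_{K_0}}(h+\Delta(m))\cap W)$. A union bound over the $m\in A$ bounds the density of the intersection from below by
\[
\sum_{m\in A}m_H\bigl(U_{\cB_{K_0}}(h+\Delta(m))\cap W\bigr)-\frac{|A|-1}{s_{K_0}}.
\]
Because $h+\Delta(m)\in W_{K_0}$ for $m\in A$, the cylinder $U_{\cB_{K_0}}(h+\Delta(m))$ is contained in $W_{K_0}$, so intersection with $W$ coincides with intersection with $W^{K_0}:=\{g\in H:g_b\neq 0\ \forall b\in\cB^{K_0}\}$. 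Light tails yields $m_H(H\setminus W^{K_0})=\overline d(\cM_{\cB^{K_0}})\to 0$ as $K_0\to\infty$. If one can show the product-type identity $m_H(U_{\cB_{K_0}}(g)\cap W^{K_0})=(1/s_{K_0})\,m_H(W^{K_0})$, then the sum above is at least $(1-|A|\,\overline d(\cM_{\cB^{K_0}}))/s_{K_0}$, which becomes strictly positive once $K_0$ is enlarged so that $\overline d(\cM_{\cB^{K_0}})<1/|A|$, producing the required $n$.

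The step I expect to be the main obstacle is justifying this product-type identity. Since $H$ is only an inverse limit---shared prime divisors among the $b$'s impose CRT compatibility constraints---$H$ is not literally the direct product of its factors indexed by $\cB_{K_0}$ and $\cB^{K_0}$, so the identity is not automatic. It must be extracted from the explicit embedding $H\subseteq\prod_{b\in\cB}\Z/b\Z$ by a Haar-measure Fubini disintegration, using crucially that the two index sets $\cB_{K_0}$ and $\cB^{K_0}$ are disjoint and that $W^{K_0}$ depends only on the $\cB^{K_0}$-coordinates.
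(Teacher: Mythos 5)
Your reduction is the same as the paper's up to the decisive step: you split the window $[-N,N]$ into the zero-set $B$, explained by finitely many $b\in\cB_{K_0}$, and the one-set $A$, pass to the progression $\cR=n_0+s_{K_0}\Z$, and correctly reduce the problem to finding one $n\in\cR$ with $n+m\in\cF_{\cB^{K_0}}$ for all $m\in A$ (here $\cB^{K_0}=\cB\setminus\cB_{K_0}$). The gap is exactly where you flagged it, and it is not reparable along the lines you sketch. First, the product-type identity $m_H(U_{\cB_{K_0}}(g)\cap W^{K_0})=s_{K_0}^{-1}m_H(W^{K_0})$ is false in general: the coordinates $h_b$ for $b\in\cB_{K_0}$ and $h_{b'}$ for $b'\in\cB^{K_0}$ are not independent under $m_H$ whenever $\gcd(s_{K_0},b')>1$. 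For instance, with $\cB\supseteq\{4,6\}$ and $\cB_{K_0}=\{4\}$, the set $\{h:h_4=1\}\cap\{h:h_6\neq0\}$ has measure $1/4$, not $(1/4)(5/6)$. Second, and more seriously, no corrected version of your union bound closes using only the light-tails hypothesis. What you need is $\sum_{m\in A}m_H(U_m\setminus W)<1/s_{K_0}$; since the cylinders $U_m$ are pairwise disjoint, the only a priori bound is $m_H(\bigcup_m U_m\setminus W^{K_0})\leqslant m_H(H\setminus W^{K_0})=\underline{d}(\cM_{\cB^{K_0}})$, and light tails gives no comparison of $\overline{d}(\cM_{\cB\setminus\cB_{K_0}})$ against $1/s_{K_0}$ (in the squarefree case the former decays polynomially in $K_0$ while $1/s_{K_0}$ is exponentially small). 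The underlying obstruction is that light tails controls the \emph{global} density of $\cM_{\cB^{K_0}}$, not its \emph{relative} density inside the progression $n_0+s_{K_0}\Z$: a single $b'\in\cB^{K_0}$ with $\gcd(b',s_{K_0})$ large, say $b'=2s_{K_0}$, contributes only $1/(2s_{K_0})$ to the global density but covers half of the progression, so Proposition~\ref{prop:mariusz-stronger} plus a union bound cannot by itself produce a point of $\cR$ avoiding $\cM_{\cB^{K_0}}-m$ for all $m\in A$.

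The paper's proof supplies precisely the missing ingredient. It chooses $K_0$ with an additional property your proposal omits, namely that every $b\in\cB\setminus\cB_{K_0}$ has a prime factor exceeding the window length $2N+1$ (possible because the set of $b\in\cB$ with all prime factors $\leqslant 2N+1$ is finite for primitive $\cB$), and then invokes \cite[Proposition 5.11]{BKKL2015}: for light-tails sets, any pattern realized by the finite truncation $\cM_{\cB_{K_0}}$ on a window is realized by $\cM_\cB$ on a set of shifts of positive density. That proposition is the analytic heart of the statement and is where light tails is genuinely used; to make your argument self-contained you would have to reprove something of that strength, and the prime-factor condition on the tail elements is the mechanism that restores the quasi-independence your product identity was meant to encode.
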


\begin{proof} Let $H=(h_k)\in H$ and $n\in\N$. We are going to show that $\varphi(h)[-n,n]=\eta[l+1,l+2n+1]$ for some $l\in\Z$. We know that $\varphi(h)(i)=1$ if and only if $h_j+i$ is not a multiple of $b_j$ for any $j\in\N$. For any $i\in[-n,n]$ such that $\varphi(h)(i)=0$ let $k_i$ be such that $b_{k_i}|h_{k_i}+i$.

Let $K\in\N$ be such that the set $\mathscr{A}:=\{b_1,\ldots,b_K\}$ contains $b_{k_i}$, for $i\in[-n,n]$ and any $b_k$ with $k>K$ has a prime factor $p>2n+1$.
Since $h\in H$, there exists $m\in Z$ such that
\begin{equation}\label{d1}
 m=h_k \mod b_k
\end{equation}
for all $k\le K$. It follows that
\begin{equation*}
(\supp \varphi(h)\cap [-n,n])+m=[-n+m,n+m]\cap \cF_{\cA}
\end{equation*}
Indeed, if $i\in \supp \varphi(h)\cap [-n,n]$,  then $h_k+i$ is not a multiple of $b_k$ for any $k\in \N$. By (\ref{d1}) we get that $m+i$ is not a multiple of $b_k$ for any $k\le K$, that is, $m+i\in \cF_{\cA}$. On the other hand, if $i\notin {\rm supp}
 \varphi(h)\cap [-n,n]$, then $b_{k_i}|h_{k_i}+i$. Since $k_i\le K$, again by (\ref{d1}), we obtain $b_{k_i}|m+i$, that is $m+i\notin \cF_{\cA}$.

By \cite[Proposition 5.11]{BKKL2015} \footnote{Assume that $\cB\subset \N$ has light tails and $\cB^{(n)}\subset \cA\subset \cB$.
Suppose that
\begin{equation}\label{dziedz:as1}
\{k+1,\ldots,k+n\}\cap {\cal M}_{\cA}=\{k+i_0,k+i_1,\ldots,k+i_r\}
\end{equation}
for some $1\le i_0,\ldots,i_r\le n$, $r<n$. Then the density of $k'\in\N$ such that
$$
\{k'+1,\ldots,k'+n\}\cap {\cal M}_{\cB}=\{k'+i_0,k'+i_1,\ldots,k'+i_r\}
$$
is positive. (Here $
\cB^{(n)}:=\{b\in \cB : p\le n \text{ for any }p\in{\rm Spec}(b)\}
$. If $\cB$ is primitive, then $\cB^{(n)}$ is finite.)}
there exists $l\in\Z$ such that
\begin{equation*}
([-n+m,n+m]\cap \cF_{\cA})+l+n+1-m=[l+1,l+2n+1]\cap \cF_{\cB}
\end{equation*}
It follows that $\varphi(h)[-n,n]=\eta[l+1,l+2n+1]$.
\end{proof}

We now present a Behrend set (hence a non-taut set), for which $X_\eta$ is a strict subeset of $X_\varphi$.

\begin{example} Let $\cB=\{p_2,p_3,\ldots\}=\{3,5,7,11,\ldots\}$ - the set of all odd prime numbers. Since we are in the coprime case,
$$
H=\prod_{k=2}^\infty\Z/p_k\Z.$$
Now, $\eta=\varphi(\Delta(0))$ is the characteristic function of the ${\cB}$-free set  $\{\pm 2^m:\:m\geq0\}$. We  compute an initial block of $\varphi(h)$ for
$$
h=(0,1,0,0,\ldots)\in H.$$
We have $\varphi(h)(0)=0$,
$\varphi(h)(1)=1$, $\varphi(h)(2)=1$, $\varphi(h)(3)=0$, $\varphi(h)(4)=0$\footnote{If we add 4 to each coordinate of $h$, we obtain the sequence $(1,0,4,4,\ldots)$, whence $\varphi(h)(4)=0$.}, $\varphi(h)(5)=1$, $\varphi(h)(6)=0$, $\varphi(h)(7)=0$ and $\varphi(h)(8)=1$. It follows that the block $11001001$ appears on $\varphi(h)$. But there is no block $\underline{a}$ of length 8 appearing on $\eta$ and such that $11001001\le\underline{a}$. Indeed, the two neighboring 1's at the beginning of $\underline{a}$ could only appear at the positions 1,2 or -2,-1 in $\eta$. In the both cases this would force $\eta(5)=1$, which is not true. This shows that $\varphi(h)\not\in X_\eta$, although it belongs to $X_\varphi$.\footnote{Indeed, $\varphi(h)$ does not even belong to $\widetilde X_\eta$, the hereditary closure of $X_\eta$, see \cite{BKKL2015}.}
\end{example}

\begin{question} If $\cB$ is taut, is then $X_\eta=X_\varphi$?~\footnote{We recall that in case of $\cB$ taut, the Mirsky measure is supported on $X_\eta$.}
\end{question}

\section{Minimality/proximality of $X_\eta$ and topological properties of $W$}\label{sec:proof-theo-minimal}
Throughout this section we assume that $\cB$ is primitive.
\subsection{Arithmetic of $\cB$ and topology of $W$, part II}
\label{subsec:arithmetics-window}
Recall from \eqref{eq:A_S-def} that $\cA_S:=\{\gcd(b,\lcm(S)): b\in\cB\}$
and $\cF_{\cA_S}\subseteq\cF_\cB$
for  $S\subset\cB$. If $S\subseteq S'\subset\cB$, then the following inclusions and implications are obvious:
\begin{equation}\label{eq:inclusions}
S\subseteq S'\subseteq\cA_{S'}\subseteq\cM_{\cA_S}
\Rightarrow\cM_S\subseteq\cM_{S'}\subseteq \cM_{\cA_{S'}}\subseteq\cM_{\cA_S}
\Rightarrow \cF_{\cA_S}\subseteq\cF_{\cA_{S'}}\subseteq\cF_{S'}\subseteq\cF_S\ .
\end{equation}
Let $\cE:=\bigcup_{S\subset\cB}\cF_{\cA_S}$ and observe that $\cE\subseteq\cF_\cB$.

\begin{lemma}\label{lemma:interiorW}
\begin{compactenum}[a)]
\item For all $S\subset\cB$ and $n\in\Z$ we have:
$U_S(\Delta(n))\subseteq W\Leftrightarrow n\in\cF_{\cA_S}$.
\item {If $(S_k)_k$ is a filtration of $\cB$ with finite sets and $\lim_k\Delta(n_{S_k})=h$ (see Remark \ref{limits}), then $h\in \inn(W)$ if and only if $n_{S_k}\in\cF_{\cA_{S_k}}$ for some $k$.}
\item For all $n\in\Z$ we have:
$\Delta(n)\in{\inn(W)}\Leftrightarrow n\in\cE$\ .
\item $\inn(W)=\emptyset\Leftrightarrow \cE=\emptyset
\Leftrightarrow \forall S\subset\cB: \cF_{\cA_S}=\emptyset
\Leftrightarrow \forall S\subset\cB: 1\in\cA_S$
\end{compactenum}
\end{lemma}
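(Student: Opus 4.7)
The plan is to first prove part a) via a direct Chinese-Remainder-Theorem (CRT) computation, and then to deduce parts b)--d) as topological/density consequences.

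For part a), I would fix finite $S\subset\cB$ and $n\in\Z$ and, for each $b\in\cB$, determine the set of values that $h_b$ can take as $h$ ranges over the cylinder $U_S(\Delta(n))$. Using that $H=\overline{\Delta(\Z)}$, the projection of $H$ onto the coordinates $\{b\}\cup S$ is the subgroup $\{(m \mod b,\,(m \mod b')_{b'\in S}):m\in\Z\}$ of order $\lcm(b,\lcm(S))$. Restricting to those $h$ with $h_{b'}\equiv n \mod b'$ for all $b'\in S$, CRT then identifies the admissible values of $h_b$ as the coset $n+\gcd(b,\lcm(S))\,\Z$ reduced $\mod b$. This coset meets $0$ iff $\gcd(b,\lcm(S))\mid n$. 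Intersecting over all $b\in\cB$, the condition $U_S(\Delta(n))\subseteq W$ becomes exactly ``no element of $\cA_S$ divides $n$'', i.e.\ $n\in\cF_{\cA_S}$.

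For parts b) and c), the key observation is that, for a filtration $(S_k)$, the cylinders $U_{S_k}(h)$ form a decreasing neighborhood basis of $h$, because any cylinder $U_S(h)$ with finite $S$ contains $U_{S_k}(h)$ once $S_k\supseteq S$. Hence $h\in\inn(W)$ iff $U_{S_k}(h)\subseteq W$ for some $k$. With $n_{S_k}$ chosen so that $\Delta(n_{S_k})\in U_{S_k}(h)$ (which is how Remark~\ref{limits} produces the approximating sequence with $\lim_k\Delta(n_{S_k})=h$), we have $U_{S_k}(h)=U_{S_k}(\Delta(n_{S_k}))$, so part a) immediately yields b). Part c) is the special case $h=\Delta(n)$ with the constant sequence $n_{S_k}=n$, together with the monotonicity $\cF_{\cA_S}\subseteq\cF_{\cA_{S'}}$ for $S\subseteq S'$ from \eqref{eq:inclusions} and the definition $\cE=\bigcup_{S}\cF_{\cA_S}$.

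For part d), the inner equivalences are immediate: $\cE=\emptyset$ iff $\cF_{\cA_S}=\emptyset$ for every finite $S\subset\cB$, and $\cF_{\cA_S}=\emptyset$ iff $1\in\cA_S$ (since $1\in\cF_{\cA_S}$ whenever $1\notin\cA_S$). The remaining equivalence $\inn(W)=\emptyset\Leftrightarrow\cE=\emptyset$ uses c) in one direction, and in the other direction the density of $\Delta(\Z)$ in $H$: if $\inn(W)$ is nonempty then it is a nonempty open set, hence contains some $\Delta(n)$, and then $n\in\cE$ by c). The main obstacle is the CRT computation in part a): one has to correctly identify the projection of $H=\overline{\Delta(\Z)}$ onto the $\{b\}\cup S$-coordinates as a proper subgroup of $\Z/b\Z\times\prod_{b'\in S}\Z/b'\Z$ (rather than as the full product) and then carry out the coset-meets-zero analysis. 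Once this is in place, the rest of the lemma follows from routine neighborhood-basis and density arguments.
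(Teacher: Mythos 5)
Your proposal is correct and follows essentially the same route as the paper: part a) reduces in both cases to the CRT solvability criterion that the system $m\equiv n \pmod{\lcm(S)}$, $m\equiv 0\pmod b$ has a solution iff $\gcd(b,\lcm(S))\mid n$ (you phrase this via the projection of $H$ onto the $\{b\}\cup S$-coordinates, the paper via density of $\Delta(n+\lcm(S)\Z)$ in the clopen cylinder, but the arithmetic core is identical), and parts b)--d) are then the same neighborhood-basis and density-of-$\Delta(\Z)$ deductions the paper uses.
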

\begin{proof}
\begin{compactenum}[a)]
\item
As $U_S(\Delta(n))$ is clopen,
\begin{equation*}
\begin{split}
U_S(\Delta(n))\not\subseteq W
&\Leftrightarrow
\exists m\in n+\lcm(S)\cdot\Z\ \exists c\in\cB: c\mid m\\
&\Leftrightarrow
\exists c\in\cB\
\exists k\in\Z: c\mid n+k\cdot\lcm(S)\\
&\Rightarrow
\exists c\in\cB:\ \gcd(c,\lcm(S))\mid n\\
&\Leftrightarrow
\exists k\in\cA_S: k\mid n\\
&\Leftrightarrow
n\not\in\cF_{\cA_S}\ .
\end{split}
\end{equation*}
{That the only implication is also an equivalence is a consequence of the CRT. Indeed, if $\gcd(c,\lcm(S))\mid n$,
then there exist $k,l\in\Z$ such that $l\cdot c-k\cdot\lcm(S)=n$, thus $c\mid n+k\cdot\lcm(S)$. }
\item {Assume that $h\in \inn(W)$, that is $U_S(h)\subseteq W$ for some $S$. Then, for $k$ such that $S\subseteq S_k$, we have
$U_{S_k}(\Delta(n_{S_k}))=U_{S_k}(h)\subseteq W$, which is equivalent to $n_{S_k}\in\cF_{\cA_{S_k}}$ by a). Conversely, if   $n_{S_k}\in\cF_{\cA_{S_k}}$ then, again by a), $U_{S_k}(\Delta(n_{S_k}))=U_{S_k}(h)\subseteq W$ and $h\in\inn(W)$. }
\item Follows from a).
\item Follows from c).
\end{compactenum}
\end{proof}
Recall from \eqref{def:Cinf} that
$\Ainf:=\{n\in\N: \forall_{S\subset\cB}\ \exists_{S': S\subseteq S'}: n\in\cA_{S'}\setminus S'\}$.
\begin{lemma}\label{lemma:ASk-C}
\begin{compactenum}[a)]
\item If $(S_k)_k$ is a filtration of $\cB$ with finite sets, then
\begin{equation*}
\limsup_{k\to\infty}\cA_{S_k}\setminus S_k=\Ainf\ .
\end{equation*}
\item For each $n\in\Ainf$ there is
a filtration $(S_k)_k$ of $\cB$ with finite sets such that
\begin{equation*}
n\in\bigcap_{k\in\N}\cA_{S_k}\setminus S_k\ .
\end{equation*}
\end{compactenum}
\end{lemma}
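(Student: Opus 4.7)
The strategy is to prove both parts from a single stronger statement: for every $n\in\Ainf$ and every filtration $(S_k)_k$ of $\cB$ by finite sets, $n\in\cA_{S_k}\setminus S_k$ for \emph{all sufficiently large} $k$. This immediately yields the nontrivial direction of (a); the converse inclusion $\limsup_k(\cA_{S_k}\setminus S_k)\subseteq\Ainf$ is routine, since if $n\in\cA_{S_{k_j}}\setminus S_{k_j}$ along a subsequence, then for any finite $S\subset\cB$ one picks $j$ with $S_{k_j}\supseteq S$ and takes $S':=S_{k_j}$ in the definition of $\Ainf$. Part (b) then follows by starting with any filtration $(S'_k)$ and reindexing it past the threshold produced by the stronger statement.

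A preliminary remark is that $\Ainf\cap\cB=\emptyset$: if $n\in\cB$, applying the defining condition of $\Ainf$ with $S=\{n\}$ would require some finite $S'\supseteq\{n\}$ with $n\in\cA_{S'}\setminus S'$, which is impossible since $n\in S'$. In particular, $n\notin S_k$ for every $k$, so only the inclusion $n\in\cA_{S_k}$ needs to be verified.

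Now fix $n\in\Ainf$ and write $L_k:=\lcm(S_k)$. First, apply the $\Ainf$ property to $S=\emptyset$ to obtain a finite $S'\subset\cB$ with $n\in\cA_{S'}$; this means $n=\gcd(b',\lcm(S'))$ for some $b'\in\cB$, so $n\mid\lcm(S')$. Since $(S_k)$ exhausts $\cB$, $S'\subseteq S_K$ for some $K$, and thus $n\mid L_k$ for every $k\geq K$. Next, for each such $k$ apply the $\Ainf$ property afresh to $S=S_k$, producing $S^{(k)}\supseteq S_k$ and $b^{(k)}\in\cB$ with $\gcd(b^{(k)},\lcm(S^{(k)}))=n$; in particular $n\mid b^{(k)}$. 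Because $L_k$ divides $\lcm(S^{(k)})$,
\[
\gcd(b^{(k)},L_k)\;\text{ divides }\;\gcd(b^{(k)},\lcm(S^{(k)}))=n,
\]
while $n\mid b^{(k)}$ and $n\mid L_k$ together yield $n\mid\gcd(b^{(k)},L_k)$. These two divisibilities pin down $\gcd(b^{(k)},L_k)=n$, so $n\in\cA_{S_k}\setminus S_k$ for every $k\geq K$, proving the stronger statement.

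The subtle point I anticipate is spotting the key leverage: $\Ainf$ must be applied \emph{afresh at each} $S_k$, so that the witness $b^{(k)}$ is automatically a multiple of $n$; this turns the easy divisibility $\gcd(b^{(k)},L_k)\mid n$ (coming from $L_k\mid\lcm(S^{(k)})$) into the desired equality by combining it with the uniform bound $n\mid L_k$ secured by one preliminary application of $\Ainf$ to $S=\emptyset$. Beyond this observation, no further obstacles are expected; primitivity of $\cB$ is not even needed for this particular lemma.
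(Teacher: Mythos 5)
Your proof is correct, and it takes a somewhat different route from the paper's. The paper handles the inclusion $\Ainf\subseteq\limsup_k(\cA_{S_k}\setminus S_k)$ by \emph{constructing} a tailored filtration inductively (picking $S_{k+1}\supseteq S_k\cup\{k+1\}$ with $n\in\cA_{S_{k+1}}\setminus S_{k+1}$ via the definition of $\Ainf$), which really establishes part (b) directly and then treats (a) as following from it. You instead prove the uniform statement that for \emph{every} filtration and every $n\in\Ainf$ one has $n\in\cA_{S_k}\setminus S_k$ for all sufficiently large $k$; the decisive step — squeezing $\gcd(b^{(k)},L_k)$ between ``divides $n$'' (from $L_k\mid\lcm(S^{(k)})$) and ``is divisible by $n$'' (from $n\mid b^{(k)}$ and the eventually valid $n\mid L_k$) — is essentially the content of the paper's Lemma~\ref{A-goes-back} specialised to the case $n\mid\lcm(S)$, but you supply it explicitly. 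What your approach buys is a cleaner and arguably more complete logical structure: it makes the filtration-independence of the $\limsup$ transparent (indeed it shows the $\limsup$ is actually a limit), it derives (b) by a trivial reindexing, and it makes clear that primitivity of $\cB$ is not needed. The preliminary observation $\Ainf\cap\cB=\emptyset$ and the easy inclusion $\limsup\subseteq\Ainf$ are handled exactly as one would expect and match the paper.
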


\begin{proof}
a) Assume that $n\in\cA_{S_k}\setminus S_k$ for infinitely many $k$, and let $S\subset\cB$.
Then there is $k$ such that $S\subseteq S_k$ and $n\in\cA_{S_k}\setminus S_k$. Hence $n\in\Ainf$.
Conversely, let $n\in\cC_{\infty}$. There is a finite set $S_1$ such that $n\in\cA_{S_1}\setminus S_1$. Assume that we have constructed sets $S_1\subset S_2\subset \ldots \subset S_k$ with the property that $n\in\cA_{S_i}\setminus S_i$ for $i=1,\ldots,k$ and
$\{1,\ldots,k\}\cap\cB\subset S_k$. Then there is a set $S_{k+1}$ containing $S_{k}\cup\{k+1\}$ and such that $n\in\cA_{S_{k+1}}\setminus S_{k+1}$. In this inductive way we construct a filtration $(S_k)_k$ as required.
\\
b) follows from a).
\end{proof}

\begin{lemma}\label{lemma:E=FBC}
The sets $\cE$ and $\Ainf$ are related by the identity
$$
\cE=\cF_{\cB\cup{\Ainf}}=\cF_\cB\cap\cF_{\Ainf}.
$$
\end{lemma}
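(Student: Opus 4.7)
The plan is to handle the second equality $\cF_{\cB\cup\Ainf}=\cF_\cB\cap\cF_{\Ainf}$ as an immediate consequence of $\cM_{\cB\cup\Ainf}=\cM_\cB\cup\cM_{\Ainf}$ and De Morgan, and then to prove the two inclusions of $\cE=\cF_\cB\cap\cF_{\Ainf}$ separately.

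For the easy inclusion $\cE\subseteq\cF_\cB\cap\cF_{\Ainf}$, I would fix $n\in\cF_{\cA_S}$ for some finite $S\subset\cB$. If some $b\in\cB$ divided $n$, then $\gcd(b,\lcm(S))\in\cA_S$ would also divide $n$, contradicting $n\in\cF_{\cA_S}$; hence $n\in\cF_\cB$. If some $d\in\Ainf$ divided $n$, then by definition of $\Ainf$ there is a finite $S'\supseteq S$ with $d\in\cA_{S'}$, and by \eqref{eq:inclusions} we have $n\in\cF_{\cA_S}\subseteq\cF_{\cA_{S'}}$, contradicting $d\mid n$; hence $n\in\cF_{\Ainf}$.

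For the reverse inclusion I would argue by contrapositive, and this is the only step with any real content. Assume $n\notin\cE$, so $n\in\cM_{\cA_S}$ for \emph{every} finite $S\subset\cB$. Fix a filtration $(S_k)_k$ of $\cB$ by finite sets and, for each $k$, choose a divisor $d_k\in\cA_{S_k}$ of $n$. The case $n=0$ is trivial because $0\in\cM_\cB$, so we may assume $n\neq 0$; then the set of positive divisors of $n$ is finite, so by pigeonhole some value $d$ occurs as $d_k$ for infinitely many $k$. Now consider two cases: either $d\in S_{k_0}$ for at least one such $k_0$, so that $d\in\cB$ and $d\mid n$ forces $n\in\cM_\cB$; or $d\in\cA_{S_k}\setminus S_k$ for infinitely many $k$, and then Lemma~\ref{lemma:ASk-C}a places $d\in\Ainf$, forcing $n\in\cM_{\Ainf}$. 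Either way $n\notin\cF_\cB\cap\cF_{\Ainf}$.

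The main obstacle, such as it is, lies in this last dichotomy: one must convert the purely quantitative statement ``for every $S$ there is a divisor of $n$ in $\cA_S$'' into the existence of a \emph{single} witness $d$ lying either in $\cB$ or in $\Ainf$. The finiteness of the divisor set of $n\neq 0$ provides the pigeonhole that produces such a $d$, and the characterization of $\Ainf$ as $\limsup_k(\cA_{S_k}\setminus S_k)$ from Lemma~\ref{lemma:ASk-C}a is exactly what turns ``$d\in\cA_{S_k}\setminus S_k$ infinitely often'' into $d\in\Ainf$; the rest is routine bookkeeping with \eqref{eq:inclusions}.
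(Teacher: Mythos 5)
Your proof is correct and follows essentially the same route as the paper: the inclusion $\cE\subseteq\cF_\cB\cap\cF_{\Ainf}$ by enlarging $S$ to a suitable $S'$ and using $\cF_{\cA_S}\subseteq\cF_{\cA_{S'}}$, and the reverse inclusion by the pigeonhole argument on the finitely many divisors of $n$ combined with the $\limsup$ characterization of $\Ainf$ from Lemma~\ref{lemma:ASk-C}. The only (harmless) differences are that you make the trivial identity $\cF_{\cB\cup\Ainf}=\cF_\cB\cap\cF_{\Ainf}$ and the case $n=0$ explicit, which the paper leaves implicit.
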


\begin{proof}
Let $n\in\cE$ and chose $S$ such that $n\in\cF_{\cA_S}$. Take arbitrary $b\in\cB$ and $c\in \Ainf$. There exists a finite set $S'$ such that
$S\cup\{b\}\subseteq S'$ and $c\in\cA_{S'}\setminus S'$. Since $\cF_{\cA_S}\subseteq \cF_{\cA_{S'}}$, $n\in \cF_{\cA_{S'}}$, hence neither $b$ nor $c$ divides $n$.
We have proved that $\cE\subseteq \cF_{\cB\cup\Ainf}$.
In order to prove the other inclusion  assume that $n\in \N$ and that for any $S$ there exists $c_S\in\cA_S$ dividing $n$.
As $n$ has only finitely many divisors, it has a divisor $c$ such that there exists a filtration $(S_k)_k$ of $\cB$ such that $c\in\cA_{S_k}$ for any $k\in \N$. If $c\notin\cB$, then $c\in\cA_{S_k}\setminus S_k$ for any $k\in \N$. This proves $n\notin\cF_{\cB\cup\Ainf}$.
\end{proof}

\begin{lemma}\label{lemma:E=FB}
$\Ainf=\emptyset$ if and only if $\cE=\cF_\cB$.
\end{lemma}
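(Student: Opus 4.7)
The plan is to apply Lemma~\ref{lemma:E=FBC}, which identifies $\cE$ as $\cF_\cB\cap\cF_{\Ainf}$. Hence the equality $\cE=\cF_\cB$ is equivalent to $\cF_\cB\subseteq\cF_{\Ainf}$, i.e.\ $\cM_{\Ainf}\subseteq\cM_\cB$. With this reformulation, one direction is immediate: if $\Ainf=\emptyset$, then $\cM_{\Ainf}=\emptyset$, hence the inclusion holds trivially and $\cE=\cF_\cB$.

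For the reverse direction, I would argue by contradiction. Suppose $\cE=\cF_\cB$ but there exists $c\in\Ainf$. Since $c\in\cM_{\Ainf}\subseteq\cM_\cB$, pick $b\in\cB$ with $b\mid c$. Now invoke the definition of $\Ainf$ with the single-element set $S=\{b\}\subset\cB$: there exists a finite $S'\subset\cB$ with $S'\supseteq\{b\}$ and $c\in\cA_{S'}\setminus S'$. By definition of $\cA_{S'}$ this gives some $b'\in\cB$ with $c=\gcd(b',\lcm(S'))$, so in particular $c\mid b'$.

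Combining $b\mid c$ with $c\mid b'$ yields $b\mid b'$, and here the hypothesis that $\cB$ is primitive forces $b=b'$. Since $b\in S'$ we have $b\mid\lcm(S')$, whence $c=\gcd(b,\lcm(S'))=b$. But this means $c=b\in S'$, directly contradicting $c\in\cA_{S'}\setminus S'$. So $\Ainf$ must be empty.

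The only real content is the second direction, where the primitivity hypothesis (standing assumption in this section) is used to collapse the divisibility relation $b\mid b'$ to the equality $b=b'$; everything else is a bookkeeping consequence of Lemma~\ref{lemma:E=FBC}. No additional technical obstacle is expected.
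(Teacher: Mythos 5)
Your proof is correct and follows essentially the same route as the paper: both directions reduce to Lemma~\ref{lemma:E=FBC}, and the reverse implication is a contradiction argument combining $b\mid c$ (from $\cM_{\Ainf}\subseteq\cM_\cB$) with $c=\gcd(b',\lcm(S'))$ and primitivity. Your choice of $S=\{b\}$, which forces $b\in S'$ and hence $c=b\in S'$, is in fact a slightly tidier way to land the final contradiction than the paper's appeal to a filtration via Lemma~\ref{lemma:ASk-C}, since it handles the case $b=b'$ explicitly rather than dismissing $b\mid b'$ outright.
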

\begin{proof}
If $\Ainf=\emptyset$, then $\cE=\cF_\cB$ by Lemma~\ref{lemma:E=FBC}. Conversely, assume that
$\cE=\cF_\cB$. Then $\cF_\cB\subseteq\cF_{\Ainf}$ by Lemma~\ref{lemma:E=FBC}, so that
$\Ainf\subseteq\cM_{\Ainf}\subseteq\cM_\cB$. Suppose for a contradiction that there exists some $n\in\Ainf$. Then there is $b\in\cB$ such that $n\in b\Z$, i.e. $b\mid n$, and there is
a finite set $S=S_k\subset\cB$ such that
$n\in\cA_{S}\setminus S$, see Lemma~\ref{lemma:ASk-C}b. Hence there exists $b'\in\cB$ such that $n=\gcd(\lcm(S),b')$. It follows that $b\mid n\mid b'$, which is impossible, because $\cB$ is assumed to be primitive.
\end{proof}

\begin{proposition}\label{prop:filtation}
The following conditions are equivalent:
\begin{compactenum}[(i)]
\item $W\neq\overline{\inn(W)}$
\item For any filtration $S_0\subset S_1\subset\ldots \subset \cB$ of $\cB$ with finite subsets $S_k$, there exists a number $d$ such that $d\in \cA_{S_k}\setminus S_k$, for infinitely many $k\in\N$.
\item There exists a filtration $S_0\subset S_1\subset\ldots \subset \cB$ of $\cB$ with finite subsets $S_k$ and there exists a number $d$ such that $d\in \cA_{S_k}\setminus S_k$, for every $k\in\N$.
\item There are $d\in\N$ and an infinite pairwise
coprime set $\cA\subseteq\N\setminus\{1\}$ such that $d\,\cA\subseteq\cB$.
\end{compactenum}
\end{proposition}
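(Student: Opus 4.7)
The plan is to establish the cycle (iv)$\Rightarrow$(i), (i)$\Rightarrow$(ii), (ii)$\Rightarrow$(iii), (iii)$\Rightarrow$(iv). The equivalence (ii)$\Leftrightarrow$(iii) is a direct consequence of Lemma~\ref{lemma:ASk-C}: part~a) identifies $\limsup_k(\cA_{S_k}\setminus S_k)$ with $\Ainf$, a quantity independent of the filtration, and part~b) produces, for any $d \in \Ainf$, a filtration realizing $d \in \cA_{S_k}\setminus S_k$ at every level.

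For (iii)$\Rightarrow$(iv), for each $k$ I pick $b_k \in \cB$ with $\gcd(b_k,\lcm(S_k)) = d$ and write $b_k = d a_k$. The assumption $d \notin S_k$ for every $k$, together with $\bigcup_k S_k = \cB$, forces $d \notin \cB$ and so $a_k \geq 2$. Extracting a subsequence $(k_j)$ such that $b_{k_i} \in S_{k_j}$ whenever $i < j$, which is possible since the filtration exhausts $\cB$, gives $a_{k_i} \mid \lcm(S_{k_j})/d$ while $\gcd(a_{k_j},\lcm(S_{k_j})/d) = 1$, hence $\gcd(a_{k_i},a_{k_j}) = 1$, and $\cA := \{a_{k_j}\}_j$ is an infinite pairwise coprime subset of $\N\setminus\{1\}$ with $d\cA \subseteq \cB$. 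For (iv)$\Rightarrow$(i), primitivity of $\cB$ gives $d \in \cF_\cB$ (if $b \in \cB$ divided $d$ it would divide $d a_1 \in \cB$, forcing $b = d a_1$ and thus $a_1 = 1$), so $h := \Delta(d) \in W$. Taking $S := \{da_1,\ldots,da_m\}$ with $d \mid \lcm(S)$, for any $S' \supseteq S$ the integer $\lcm(S')/d$ has finitely many prime factors, and pairwise coprimality of $\cA$ produces some $a_j$ coprime to $\lcm(S')/d$, so $\gcd(da_j,\lcm(S')) = d$ and $d \in \cA_{S'}$. Any $n \equiv d \bmod \lcm(S)$ is divisible by $d$ and so lies in no $\cF_{\cA_{S'}}$, hence $n \notin \cE$, which by Lemma~\ref{lemma:interiorW}c gives $\Delta(n) \notin \inn(W)$. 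Density of $\Delta(\Z)$ in $H$ then forces $U_S(h) \cap \inn(W) = \emptyset$, so $h \notin \overline{\inn(W)}$.

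The main difficulty is (i)$\Rightarrow$(ii). Given $h \in W \setminus \overline{\inn(W)}$, fix $S_0$ with $U_{S_0}(h) \cap \inn(W) = \emptyset$ and a representative $n_0$ of $h|_{S_0}$. I split according to whether the coset $n_0 + \lcm(S_0)\Z$ meets $\cF_\cB$. In Case~A, some $m$ in this coset is $\cB$-free; then $\Delta(m) \in U_{S_0}(h) \cap W$ lies outside $\inn(W)$, so Lemma~\ref{lemma:interiorW}c gives $m \notin \cE$. Combined with $m \in \cF_\cB$ and the identity $\cE = \cF_\cB \cap \cF_{\Ainf}$ from Lemma~\ref{lemma:E=FBC}, this forces $m \in \cM_{\Ainf}$, so $\Ainf \neq \emptyset$ and (ii) follows.

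Case~B, where the coset is disjoint from $\cF_\cB$, is the real obstacle. Here $h$ belongs to $W \setminus \overline{\Delta(\Z) \cap W}$, and the contrapositive of the implication (i)$\Rightarrow$(iii) of Theorem~\ref{theo:taut-regular} forces $\cB$ to be non-taut; the Behrend-type characterization of tautness recalled above then yields $q \in \N$ and a Behrend set $\cA' \subseteq \N \setminus \{1\}$ with $q\cA' \subseteq \cB$. It remains to extract from $\cA'$ an infinite pairwise coprime subset $\cA$: a greedy construction succeeds unless at some stage only finitely many candidates remain, in which case $\cA' \subseteq \cM_P \cup E$ for a finite set of primes $P$ and a finite exceptional $E \subset \N \setminus \{1\}$; but then $\cM_{\cA'} \subseteq \cM_{P \cup E}$ has logarithmic density strictly less than $1$, contradicting $\ddelta(\cM_{\cA'}) = 1$. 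With such an $\cA$, $q\cA \subseteq \cB$ realizes (iv), closing the loop via the already-established (iv)$\Rightarrow$(iii)$\Rightarrow$(ii).
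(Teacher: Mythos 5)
Your argument is essentially complete, but the logical bookkeeping at the end of Case~B of (i)$\Rightarrow$(ii) does not close as written. You appeal to ``the already-established (iv)$\Rightarrow$(iii)$\Rightarrow$(ii)'', yet what you actually established earlier is (iii)$\Rightarrow$(iv) and (ii)$\Leftrightarrow$(iii), not (iv)$\Rightarrow$(iii). With only the implications you list --- (iv)$\Rightarrow$(i), (i)$\Rightarrow$[(ii) or (iv)], (ii)$\Leftrightarrow$(iii), (iii)$\Rightarrow$(iv) --- one cannot exclude the configuration in which (i) and (iv) hold while (ii) and (iii) fail, so the four conditions are not yet proved equivalent. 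The repair is already contained in your own proof of (iv)$\Rightarrow$(i): there you show that for $S=\{da_1,\dots,da_m\}$ and \emph{every} finite $S'\supseteq S$ one has $d\in\cA_{S'}$, and that $d\in\cF_\cB$, hence $d\notin\cB\supseteq S'$; this is precisely the statement $d\in\Ainf$. So (iv) implies $\Ainf\neq\emptyset$, which by Lemma~\ref{lemma:ASk-C} is equivalent to (ii) and to (iii). Make that observation explicit and both Case~B and the whole cycle close.

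Modulo this, the proof is correct and takes a genuinely different route from the paper for the key implication (i)$\Rightarrow$(ii). The paper argues directly: from $n+\lcm(S)\Z\subseteq\cM_{\cA_T}$ for all finite $T$ it extracts, via Dirichlet's theorem and pigeonholing over the divisors of $\gcd(n,\lcm(S))$, a single $d$ lying in infinitely many $\cA_{S_k}\setminus S_k$; it never needs tautness. You instead split on whether the coset $n_0+\lcm(S_0)\Z$ meets $\cF_\cB$: the ``meets'' case is handled cleanly by the identity $\cE=\cF_\cB\cap\cF_{\Ainf}$ of Lemma~\ref{lemma:E=FBC}, while the ``misses'' case shows $\overline{\Delta(\Z)\cap W}\neq W$, imports Theorem~\ref{theo:taut-regular} and Behrend's characterization of tautness to produce $q\cA'\subseteq\cB$ with $\cA'$ Behrend, and then extracts an infinite pairwise coprime subset (your density argument for the extraction is correct). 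This is heavier machinery --- it makes the proposition depend on Section~2 and Proposition~\ref{prop:mariusz-stronger}, though those are proved beforehand, so there is no circularity --- but it does illuminate arithmetically where condition (iv) comes from. Your (iii)$\Rightarrow$(iv) is essentially the paper's argument, and your direct (iv)$\Rightarrow$(i), replacing the paper's (iii)$\Rightarrow$(i), is also sound.
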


\begin{proof}
$(i)\Rightarrow (ii)$: Let $h=(h_b)\in W\setminus \overline{\inn(W)}$. There exists  $S$ such that $U_S(h)\cap \inn(W)=\emptyset$. We can assume that any $b\in \cB$ such that $b|\lcm(S)$, belongs to $S$.\footnote{Otherwise we can incorporate all such $b$'s into $S$, there are finitely many of them.} Let $n$ be a number such that
\begin{equation}\label{f1}
n= h_b \mod b
\end{equation}
for $b\in S$.

Then $\Delta(n+k\lcm(S))\in U_S(h)$, hence $\Delta(n+k\lcm(S))\notin \inn(W)$ for any $k\in\Z$. This means (see Lemma \ref{lemma:interiorW}) that for any finite set $T$, in particular for any $T=S_k$, the arithmetic progression $n+\lcm(S)\Z$ is contained in $\cM_{\cA_T}$. Since the set $\cA_T$ is finite, it follows that $\cA_T$ contains a divisor of $\gcd(n,\lcm(S))$\footnote{Apply Dirichlet theorem on primes in arithmetic progressions.}. There is only finitely many divisors of $\gcd(n,\lcm(S))$, hence one of them, denote it by $d$, appears in $\cA_{S_k}$ for infinitely many $k$. To finish the proof it is enough to observe that $d\notin\cB$ (consequently, $d\notin S_k$, for any $k$). Indeed, otherwise $d\in S$, by our assumption on $S$. Moreover, $d|n$  and then, by (\ref{f1}), $d|h_b$, where $b=d$, which leads to a contradiction with the assumption $h\in W$.

$(ii)\Rightarrow (iii)$: obvious

$(iii)\Rightarrow (i)$: Assume that $d\in\cA_{S_k}\setminus S_k$ for any $k$. Then $d\notin \cM_{\cB}$\footnote{Otherwise $d$ is divisible by some $b\in\cB$. On the other hand, $d$ divides some $b'\in\cB$ as a member of $\cA_{S_k}$, which in view of the fact that $\cB$ is primitive, leads to the conclusion that $d=b=b'\in\cB$. But it is not true, since $d\notin S_k$ for any $k$.}, hence $\Delta(d)\in W$. We prove that $\Delta(d)\notin\overline{\inn(W)}$. It is enough to show that $U_{S_0}(\Delta(d))\cap \inn(W)=\emptyset$.

Assume that $h=(h_b)\in U_{S_0}(\Delta(d))\cap \inn(W)$. It means that
\begin{equation}\label{f2}
d=h_b \mod b\; \mbox{\rm for any}\;b\in S_0\ ,
\end{equation}
 and there exists a finite set $T\subset\cB$ such that $U_T(h)\subset W$. We can assume that $T=S_k$ for some $k$.

Let $m\in\Z$ be such that
\begin{equation}\label{f3}
m=h_b \mod b\; \mbox{\rm for any}\;b\in S_k\ .
\end{equation}
   Let $c\in \cB$ be such that $\gcd(c,\lcm(S_k))=d$. {Clearly, $c\notin S_k$, since $d\notin \cB$.} Since $U_{S_k}(h)\subset W$, it follows
   that there exists $b\in S_k$ such that
\begin{equation}\label{f4}
\gcd(c,b)\; \mbox{\rm  does not divide}\; h_b \ ,
\end{equation}
{Indeed, otherwise there would exist $l\in\Z$ such that $l\equiv h_b$ mod $b$ for $b\in S_k$ and $l= 0$ mod $c$, hence $\Delta(l)\in U_{S_k}(h)$, but $\Delta(l)\notin W$, a contradiction.}
Thus, in view of (\ref{f3}),
\begin{equation}\label{f4a}
\gcd(c,b) \; \mbox{\rm  does not divide}\; m\ .
\end{equation}

On the other hand,
\begin{equation}\label{f5}
\gcd(c,b)|\gcd(c,\lcm(S_k))=d\ .
\end{equation}
Since $d\in\cA_{S_0}\setminus S_0$ we get
\begin{equation}\label{f6}
d|\lcm(S_0)\ .
\end{equation}
By (\ref{f2}) and (\ref{f3}),
\begin{equation}\label{f7}
\lcm(S_0)|m-d\ .
\end{equation}
Now, (\ref{f5}), (\ref{f6}) and (\ref{f7}) imply $\gcd(c,b)|m$, a contradiction with (\ref{f4a}).

$(iii)\Rightarrow (iv)$: Assume that $d\in\cA_{S_k}\setminus S_k$ for any $k$. Then
\begin{equation}\label{eq:search-for-coprime}
\forall k\in\N\ \exists b_k\in\cB\setminus S_k:\ d=\gcd(b_k,\lcm(S_k))\ .
\end{equation}
As $d\not\in S_k$, we have $b_k\neq d$ for all $k$.
We choose a subsequence $b_{k_1},b_{k_2},\dots$ of $(b_k)_k$ in the following way: Let $k_1=1$, and given $k_1,\dots,k_j$, let
\begin{equation*}
k_{j+1}:=\min\left\{k\in\N: b_{k_1},\dots,b_{k_j}\in S_{k_{j+1}}\right\}.
\end{equation*}

Let $a_j=b_{k_j}/d$ for all $j\in\N$ and denote $\cA=\{a_j:j\in\N\}$.
Then $\cA\subseteq\N$  and $d\,\cA\subseteq \cB$ by construction.
Suppose that $1\in\cA$. Then $d\in\cB$, a contradiction to
\eqref{eq:search-for-coprime}, as $\cB$ is primitive. Hence $\cA\subseteq\N\setminus\{1\}$.

It remains to prove that
$\cA$ is pairwise coprime.
Suppose for a contradiction that there is a prime number $p$ dividing some $a_i$ and $a_j$, $i<j$. Then $pd\mid b_{k_i}$ and $pd\mid b_{k_j}$.
As $b_{k_i}\in S_{k_j}$, it follows that $pd\mid\lcm(S_{k_j})$, so that $pd\mid\gcd(b_{k_j},\lcm(S_{k_j}))=d$ (see \eqref{eq:search-for-coprime}), which is impossible.

$(iv)\Rightarrow (iii)$: Let $d\in\N$ and $\cA=\{a_1<a_2<\dots\}$ be as in $(iv)$. Then $d\not\in\cB$, because $\cB$ is primitive.
For $k\in\N$ let $S_k=\cB\cap\{1,\dots,k\}\cup\{da_k\}$. As all $a_j$ are pairwise coprime, there are $j_1<j_2<\dots\in\N$ such that $a_{j_k}$ is coprime to $\lcm(S_k)$. On the other hand, $d\mid\lcm(S_k)$. Hence $d=\gcd(da_{j_k},\lcm(S_k))\in\cA_{S_k}$. As $d\not\in\cB$, we see that $d\in\cA_{S_k}\setminus S_k$ for all $k\in\N$.
\end{proof}

\begin{proposition}\label{prop:Staszek-prop-equiv}
The following conditions are equivalent:
\begin{compactenum}[(i)]
\item $W$ is topologically regular, i.e. $W=\overline{\inn(W)}$.
\item There are no $d\in\N$ and no infinite pairwise
coprime set $\cA\subseteq\N\setminus\{1\}$ such that $d\,\cA\subseteq\cB$.
\item $\Ainf=\emptyset$.
\item $\cE=\cF_\cB$.
\end{compactenum}
\end{proposition}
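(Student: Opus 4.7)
The plan is to assemble this proposition by stringing together the results already proved in Subsection~\ref{subsec:arithmetics-window}, so essentially no new work is required; the task is primarily bookkeeping of negations. I will establish the three equivalences (i)$\Leftrightarrow$(ii), (ii)$\Leftrightarrow$(iii), and (iii)$\Leftrightarrow$(iv) separately.

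For (i)$\Leftrightarrow$(ii), I would negate both sides and appeal directly to Proposition~\ref{prop:filtation}. Specifically, $W\neq\overline{\inn(W)}$ is condition (i) of that proposition, while the negation of (ii) here is exactly condition (iv) there. Since Proposition~\ref{prop:filtation} establishes that (i) and (iv) of its list are equivalent, this direction is immediate.

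For (ii)$\Leftrightarrow$(iii), I would again argue via negation and combine Proposition~\ref{prop:filtation} with Lemma~\ref{lemma:ASk-C}. If $\Ainf\neq\emptyset$, pick $n\in\Ainf$; then Lemma~\ref{lemma:ASk-C}b supplies a filtration $(S_k)_k$ such that $n\in\cA_{S_k}\setminus S_k$ for all $k$, which is precisely condition (iii) of Proposition~\ref{prop:filtation}, hence condition (iv) there holds, i.e.\ the negation of our (ii). Conversely, if the negation of (ii) holds, i.e.\ there exist $d\in\N$ and an infinite pairwise coprime $\cA\subseteq\N\setminus\{1\}$ with $d\cA\subseteq\cB$, then condition (iv) of Proposition~\ref{prop:filtation} is satisfied, so condition (iii) there yields a filtration $(S_k)_k$ and some $d\in\cA_{S_k}\setminus S_k$ for all $k$; by Lemma~\ref{lemma:ASk-C}a this $d$ lies in $\Ainf$, so $\Ainf\neq\emptyset$.

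Finally, (iii)$\Leftrightarrow$(iv) is exactly the content of Lemma~\ref{lemma:E=FB}, so I just cite it. The only step that requires a sliver of thought is matching up the slightly different quantifier structures in (iii) versus (iv) of Proposition~\ref{prop:filtation} with the definition of $\Ainf$, but Lemma~\ref{lemma:ASk-C} is tailor-made for exactly this translation, so there is no genuine obstacle.
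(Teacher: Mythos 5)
Your proposal is correct and follows essentially the same route as the paper: both assemble the proposition from Proposition~\ref{prop:filtation}, Lemma~\ref{lemma:ASk-C} and Lemma~\ref{lemma:E=FB}, the only cosmetic difference being that you link (ii) to (iii) via Lemma~\ref{lemma:ASk-C} whereas the paper links (i) to (iii). The bookkeeping of negations and quantifiers is handled correctly.
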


\begin{proof}
The equivalence of $(i)$ and $(ii)$ follows from Proposition~\ref{prop:filtation},
that of $(iii)$ and $(iv)$ from Lemma~\ref{lemma:E=FB}..
In view of Lemma~\ref{lemma:ASk-C},
Proposition~\ref{prop:filtation} finally implies the equivalence of $(i)$ and $(iii)$, too.
\end{proof}

\begin{lemma}\label{lemma:DeltaZ-1}
$\Delta(\Z)\cap\left(\overline{\inn(W)}\setminus\inn(W)\right)=\emptyset$.
\end{lemma}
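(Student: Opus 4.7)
I would argue by contradiction: suppose some $n\in\Z$ satisfies $\Delta(n)\in\overline{\inn(W)}\setminus\inn(W)$. Since $W=\bigcap_{b\in\cB}(H\setminus U_{\{b\}}(0))$ is an intersection of clopen sets, $W$ is closed, and so $\Delta(n)\in\overline{\inn(W)}\subseteq W$, giving $n\in\cF_\cB$. Combined with $\Delta(n)\notin\inn(W)$ and Lemma~\ref{lemma:interiorW}(c), this yields $n\in\cF_\cB\setminus\cE$, and Lemma~\ref{lemma:E=FBC} then produces a divisor $d\mid n$ with $d\in\Ainf$; note that $d\notin\cB$, for otherwise $d\mid n$ would contradict $n\in\cF_\cB$.

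The strategy is now to exploit $\Delta(n)\in\overline{\inn(W)}$ to build an integer $m$ lying in some $\cF_{\cA_T}$ and divisible by $d$, and then to invoke $d\in\Ainf$ a second time to extract an element $c\in\cA_T$ with $c\mid d$, which will contradict $m\in\cF_{\cA_T}$. By the definition of $\Ainf$ I would pick $b'\in\cB$ with $d=\gcd(b',\lcm(S'_0))$ for some finite $S'_0$; in particular $d\mid b'$. Using $\Delta(n)\in\overline{\inn(W)}$ with the cylinder $U_{\{b'\}}(\Delta(n))$, I find $h\in U_{\{b'\}}(\Delta(n))\cap\inn(W)$ and a finite $T\subseteq\cB$ (enlarged so that $b'\in T$) with $U_T(h)\subseteq W$. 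Density of $\Delta(\Z)$ in $H$ produces $m\in\Z$ with $\Delta(m)\in U_T(h)$. Lemma~\ref{lemma:interiorW}(a) applied to $U_T(\Delta(m))=U_T(h)\subseteq W$ yields $m\in\cF_{\cA_T}$, while agreement of $h$ and $\Delta(n)$ at the coordinate $b'$ gives $m\equiv n\pmod{b'}$, whence $d\mid b'\mid m-n$ and therefore $d\mid m$.

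To close the contradiction, I apply the definition of $\Ainf$ once more with $S=T$: there exist $S''\supseteq T$ and $b''\in\cB$ with $d=\gcd(b'',\lcm(S''))$. Set $c:=\gcd(b'',\lcm(T))\in\cA_T$. Because $\lcm(T)\mid\lcm(S'')$, the integer $c$ is a common divisor of $b''$ and $\lcm(S'')$, hence $c\mid d$; combined with $d\mid m$ this gives $c\mid m$, so $m\in\cM_{\cA_T}$, contradicting $m\in\cF_{\cA_T}$. The main subtlety is the identification of this $c$: one is tempted to take $c=d$, but $d$ need not itself lie in $\cA_T$. The key point is that the element $c\in\cA_T$ arising from the witness $b''$ for $d\in\cA_{S''}$ automatically satisfies $c\mid d$ rather than the reverse, precisely because $\lcm(T)\mid\lcm(S'')$; this chain $c\mid d\mid m$ is what delivers the contradiction.
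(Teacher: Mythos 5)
Your argument is correct, and it reaches the conclusion by a genuinely different route than the paper. The paper's proof is self-contained: from $\Delta(m)\in\overline{\inn(W)}\setminus\inn(W)$ it extracts, for \emph{every} finite $S$, an interior point $\Delta(n_S)\in U_S(\Delta(m))\cap\inn(W)$ together with a witness $d_S=\gcd(b_S,\lcm(T_S))$ that divides $m$ but not $n_S$ (hence not $\lcm(S)$), and then iterates along a filtration; since $m$ has only finitely many divisors, the sequence $\lcm(d_1,\dots,d_k)$ stabilizes, which contradicts $d_l\nmid c_{l-1}$. You instead delegate that pigeonhole to Lemma~\ref{lemma:E=FBC}: the identity $\cE=\cF_\cB\cap\cF_{\Ainf}$ (combined with Lemma~\ref{lemma:interiorW}) immediately yields a single $d\in\Ainf$ with $d\mid n$, after which \emph{one} approximating interior point suffices --- choosing the cylinder $U_{\{b'\}}(\Delta(n))$ with $d\mid b'$ transports $d\mid n$ to $d\mid m$ for some $m\in\cF_{\cA_T}$, and the observation that $\gcd(b'',\lcm(T))$ divides $\gcd(b'',\lcm(S''))=d$ whenever $T\subseteq S''$ (this is exactly Lemma~\ref{A-goes-back}, which you reprove inline) produces $c\in\cA_T$ with $c\mid d\mid m$, contradicting $m\in\cF_{\cA_T}$. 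All steps check out, including the edge case $c=1$ (then $\cF_{\cA_T}=\emptyset$, still a contradiction) and the point you rightly flag, namely that $d$ itself need not lie in $\cA_T$. What your route buys is brevity and a conceptual identification of the obstruction (a divisor of $n$ lying in $\Ainf$); what the paper's route buys is independence from $\cE$, $\Ainf$ and Lemma~\ref{lemma:E=FBC}, at the cost of an explicit iteration --- the underlying finiteness-of-divisors argument is the same in both, hidden in your case inside the proof of Lemma~\ref{lemma:E=FBC}.
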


\begin{proof}
Assume $\Delta(m)\in \overline{\inn(W)}\setminus\inn(W)$. {Then for any $S\subset \cB$ there exists $n_S\in\Z$ such that $\Delta(n_S)\in U_S(\Delta(m))\cap \inn(W)$.} It means that for any $S$ there exist: a finite set $T_S\subset \cB$, (we can assume that $S\subset T_S$), $b_S\in\cB$ and $n_S\in\Z$ such that (see Lemma \ref{lemma:interiorW} c)):
 \begin{compactenum}[$\bullet$]
 \item $\lcm(S)|m-n_s$ (that is, $\Delta(n_s)\in U_S(\Delta(m))$)
 \item $\gcd(b_S,\lcm(T_S))$ does not divide $n_S$ ($\Delta(n_S)$ is chosen to be an element of $U_S(\Delta(m))\cap \inn(W)$)
 \item $\gcd(b_S,\lcm(T_S))|m$ (since $\Delta(m)\notin \inn(W)$)
  \end{compactenum}
  Then  $\gcd(b_S,\lcm(T_S))$ does not divide $\lcm(S)$.
 Let us iterate: $S_0$ is arbitrary and $S_{k+1}:=T_{S_k}$, $c_k:=\lcm(S_{k+1})$, $d_k:=\gcd(b_{S_k},\lcm(S_{k+1}))$.
 We have:
 \begin{compactenum}[$\bullet$]
 \item $c_k|c_{k+1}$
 \item $d_k|m$
 \item $d_k|c_k$
 \item $d_k$ does not divide  $c_{k-1}$
 \end{compactenum}
{ Since $d_k|m$ for every $k$, the sequence $(\lcm(d_1,\ldots d_k))_k$ stabilizes on $\lcm(d_1,\ldots d_{k_0})$ for some $k_0$, which means  $d_l$ divides $\lcm(d_1,\ldots d_{k_0})$, and consequently $d_l$ divides $\lcm(c_1,\ldots,c_{k_0})=c_{k_0}$, for any $l$, a contradiction.}
\end{proof}

For $x\in\{0,1\}^\Z$ denote $\supp x:=\{n\in\Z: x(n)=1\}$. Following \cite{BKKL2015} we consider the set
\begin{equation*}
\begin{split}
Y
:=&
\left\{x\in\{0,1\}^\Z: |\supp x\text{ mod }b|=b-1\text{ for all }b\in\cB\right\}\\
=&
\left\{x\in\{0,1\}^\Z: \text{for all }b\in\cB\text{ there is exactly one }r\in\{0,\dots,b-1\}\text{ with }\supp x\cap(b\Z+r)=\emptyset\right\}.
\end{split}
\end{equation*}
As  $\supp\eta=\cF_\cB$ is disjoint from $b\Z$ for all $b\in\cB$, we have
\begin{equation}\label{eq:eta-in-Y}
\eta\in Y
\Leftrightarrow
\forall b\in\cB\ \forall r\in\{1,\dots,b-1\}: \cF_\cB\cap (b\Z+r)\neq\emptyset\ .
\end{equation}

\begin{lemma}\label{lemma:Y-eta}
If $\overline{\Delta(\Z)\cap W}=W$, then $\eta\in Y$.
\end{lemma}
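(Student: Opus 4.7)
The plan is to verify the criterion \eqref{eq:eta-in-Y}: it suffices to show that for every $b\in\cB$ and every $r\in\{1,\dots,b-1\}$, the progression $b\Z+r$ meets $\cF_\cB$. Fix such $b$ and $r$. The overall strategy has two steps. First, exhibit an element $h\in W$ with $h_b=r$. Second, apply the hypothesis $\overline{\Delta(\Z)\cap W}=W$ to extract a sequence $(n_k)$ in $\cF_\cB$ with $\Delta(n_k)\to h$; since convergence in $H$ is coordinatewise, $n_k\equiv r\mod b$ for all large $k$, which provides the required integer.

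Step~1 is handled by a compactness argument in $H$. For each finite $S\subset\cB$ containing $b$, set
\begin{equation*}
A_S:=\{h\in H: h_b=r\text{ and }h_{b'}\neq 0\text{ for all }b'\in S\setminus\{b\}\}.
\end{equation*}
Each $A_S$ is clopen, the family $\{A_S\}_S$ is downward-directed under inclusion, and $\bigcap_S A_S=\{h\in W: h_b=r\}$ (the condition $h_b\neq 0$ is automatic since $r\neq 0$). By compactness of $H$, nonemptiness of this intersection reduces to nonemptiness of each $A_S$, which, since $H=\overline{\Delta(\Z)}$, amounts to producing an integer $n$ with $n\equiv r\mod b$ and $b'\nmid n$ for every $b'\in S\setminus\{b\}$.

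The construction of this $n$ is the main technical point, and is where primitivity of $\cB$ enters in an essential way. For each $b'\in S\setminus\{b\}$ the relation $b'\nmid b$ (given by primitivity) guarantees the existence of a prime $p_{b'}\mid b'$ with $v_{p_{b'}}(b')>v_{p_{b'}}(b)$, where $v_p$ denotes $p$-adic valuation; the weaker demand $v_{p_{b'}}(n)<v_{p_{b'}}(b')$ already forces $b'\nmid n$. Setting $m_p:=\min\{v_p(b'):p_{b'}=p\}$ (with $m_p:=\infty$ if no $b'$ is assigned to $p$), the problem reduces to solving, for each prime $p$, the local system
\begin{equation*}
n\equiv r\mod p^{v_p(b)}\ \text{ and }\ v_p(n)<m_p,
\end{equation*}
and then assembling the local solutions by the Chinese Remainder Theorem. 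Local solvability is straightforward because $m_p>v_p(b)$ by the choice of $p_{b'}$: the residue class determined by the first congruence has $p^{m_p-v_p(b)}\geq p\geq 2$ representatives modulo $p^{m_p}$, and at most one of them is $\equiv 0\mod p^{m_p}$. I expect this prime-by-prime bookkeeping to be the only nontrivial step — elementary in nature, but requiring primitivity to be invoked at exactly the right moment so that each $m_p$ strictly exceeds $v_p(b)$.
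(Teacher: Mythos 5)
Your proof is correct, and it reaches the conclusion by a genuinely different route at the arithmetic core. Both arguments share the same skeleton: by \eqref{eq:eta-in-Y} one must show $\cF_\cB\cap(b\Z+r)\neq\emptyset$ for each $b\in\cB$ and $r\in\{1,\dots,b-1\}$; since $V_b(r):=\{h\in H:h_b=r\}$ is clopen, the hypothesis $\overline{\Delta(\Z)\cap W}=W$ reduces this to showing $V_b(r)\cap W\neq\emptyset$. For that last step the paper argues by contradiction: if $V_b(r)\cap W=\emptyset$, a Heine--Borel covering of the compact set $V_b(r)$ by the open sets $V_{b'}(0)$ produces a finite $S\subset\cB\setminus\{b\}$ with $b\Z+r\subseteq\cM_S$, and then Lemma~\ref{lemma:introductory}, Dirichlet's theorem and a density comparison ($d(\cM_{\tilde b\Z+\tilde r})=1$ while $d(\cM_{S'(q)})<1$ because $S'(q)$ is finite and $1\notin S'(q)$ by primitivity) give a contradiction. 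You instead prove the corresponding finite statement affirmatively: for every finite $S\ni b$ you construct an integer $n\equiv r \bmod b$ with $b'\nmid n$ for all $b'\in S\setminus\{b\}$, using primitivity (in the form $b'\nmid b$) to pick a prime $p_{b'}$ with $v_{p_{b'}}(b')>v_{p_{b'}}(b)$ and then solving the finitely many local conditions by the CRT; the finite intersection property of the clopen sets $A_S$ then yields $h\in V_b(r)\cap W$. Your valuation-and-CRT bookkeeping is sound (the count $p^{m_p-v_p(b)}\geq 2$ with at most one forbidden residue class is exactly what is needed, and $m_p>v_p(b)$ does follow from the choice of the $p_{b'}$), and it is more elementary than the paper's argument in that it avoids Dirichlet's theorem and all density considerations; the paper's version, on the other hand, reuses Lemma~\ref{lemma:introductory} and the density machinery already in place for the proof of Theorem~\ref{theo:taut-regular}. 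In both proofs primitivity enters at the analogous and essential point.
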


\begin{proof}
For $b\in\cB$ and $r\in\{0,\dots,b-1\}$ let $V_b(r):=\{h\in H: h_b=r\}$ and observe that these sets are open and closed in $H$.
Hence $\overline{\Delta(\Z)\cap V_b(r)\cap W}=V_b(r)\cap W$, because $\overline{\Delta(\Z)\cap W}=W$.

Suppose for a contradiction that $\eta\not\in Y$. Then (\ref{eq:eta-in-Y}) implies
that there are $b\in\cB$ and $r\in\{1,\dots,b-1\}$ such that
\begin{equation*}
\Delta(\Z)\cap V_b(r)\cap W=\emptyset\ ,
\end{equation*}
which implies that also $V_b(r)\cap W=\emptyset$. Hence
\begin{equation*}
V_b(r)\subseteq
W^c=
\bigcup_{b'\in\cB}V_{b'}(0)\ ,
\end{equation*}
and as $V_b(r)$ is compact and the $V_{b'}(0)$ are open, there is a finite $S\subset\cB$ such that
\begin{equation*}
V_b(r)\subseteq
\bigcup_{b'\in S}V_{b'}(0)\ ,
\end{equation*}
In other words, whenever $h_b=r$ for some $h\in H$, then $h_{b'}=0$ for some $b'\in S$.
Applied to any $h=\Delta(n)$ this yields:
\begin{equation*}
n\in b\Z+r\;\Rightarrow\;n\in\bigcup_{b'\in S}b'\Z\ .
\end{equation*}
{Since $r$ is not divisible by $b$, we can assume that $b\notin S$.}
Let $q=\gcd(b,r)$, $\tilde{b}=b/q$, $\tilde{r}=r/q$. Then $q\,(\tilde{b}\Z+\tilde{r})=b\Z+r\subseteq\cM_S$, so that $\cM_{\tilde{b}\Z+\tilde{r}}\subseteq\cM_{S'(q)}$ by Lemma~\ref{lemma:introductory}.  But $d(\cM_{\tilde{b}\Z+\tilde{r}})=1$ by Dirichlet's theorem, whereas $d(\cM_{S'(q)})<1$, because $S'(q)\subseteq\{1,\dots,\max S\}$ is finite {and $1\notin S'(q)$} \footnote{{As $q|b$ and $\cB$ is primitive,  $q\notin S$, thus $1\notin S'(q)$.}}. This is a contradiction.
\end{proof}

\begin{remark}
Together with Theorem~\ref{theo:taut-regular} this shows that
$\eta\in Y$ whenever $\cB$ is taut. This implication was proved previously in \cite[Corollary 4.27]{BKKL2015}.
\end{remark}

Lemma~\ref{lemma:Y-eta} provides the implication
\begin{equation*}
\overline{\Delta(\Z)\cap W}=W\Rightarrow \eta\in Y.
\end{equation*}
The reverse implication does not hold, as is shown by the next example.

\begin{example} Observe that for every $k\in\Z$ there exists a prime divisor $p_k$ of $5+12k$ such that
\begin{equation}\label{ex_1}
p_k\neq 1\mod 12 \;\;\text{and}\;\;p_k\neq -1\mod 12
\end{equation}
Let $$\cB=\{4,6\}\cup\{p_k:k\in\Z\}$$
Let us enumerate the elements of $\cB$ as $b_0,b_1,b_2,\ldots$ and $b_0=4, b_1=6$.
Observe that
\begin{equation}\label{ex_2}
5+12\Z\subset\cM_{\cB}
\end{equation}
Since niether 2 nor 3 divides an element of the progression $5+12\Z$, in view of (\ref{ex_1}) we see that $1,2,3,11,22\in\cF_{\cB}$. It follows that
\begin{equation}\label{ex_3}
|\supp\cF_{\cB} \mod 4|=3\;\;\text{and} \;\; |\supp\cF_{\cB} \mod 6|=5
\end{equation}

We claim that
\begin{equation}\label{ex_4}
|\supp\cF_{\cB} \mod b_k|=b_k-1\;\text{for any} \; k\ge 2
\end{equation}
It is clear that $\gcd(12,b_k)=1$ for any $k\ge 2$. Let $k\ge 2$ and take arbitrary $r\in\{1,\ldots,b_k-1\}$. There exists $r'\in\Z$ such that
\begin{equation}\label{ex_5}
\left\{\begin{array}{c}
r'\equiv r\mod b_k\\
r'\equiv 1 \mod 12
\end{array}\right.
\end{equation}
Then $\gcd(12b_k,r')=1$ and, by Dirichlet Theorem, there exists a prime number $q$ of the form $q=12b_kl+r'$ for some $l\in\Z$. Since, by (\ref{ex_5}), $q\equiv 1\mod 12$, $q\in\cF_{\cB}$ by (\ref{ex_1}). Moreover, $q\equiv r\mod b_k$ by (\ref{ex_5}).

Thus the claim (\ref{ex_4}) follows. Clearly, (\ref{ex_4}) and (\ref{ex_3}) yield $\eta\in Y$.

We shall construct $h\in W$ such that $h\notin\overline{\Delta(\Z)\cap W}$. We denote $S_k=\{b_0,b_1,\ldots b_k\}$. Inductively we construct a sequence $(n_{S_k})$ of integers satisfying:
\begin{compactenum}[a)]
\item $n_{S_1}=5$
\item $\lcm(S_k)|n_{S_{k+1}}-n_{S_k}$ for $k=1,2,\ldots$
\item $n_{S_k}\in\cF_{S_k}$ for $k=1,2,\ldots$
\end{compactenum}
Assume that $n_{S_1},\ldots,n_{S_k}$ have been constructed. If $b_{k+1}$ does not divide $n_{S_k}$, we set $n_{S_{k+1}}=n_{S_k}$. Otherwise we set
$n_{S_{k+1}}=n_{S_k}+\lcm(S_k)$. The conditions a), b), c) follow easily by induction.
Let
$$
h=\lim_k\Delta(n_{S_k})
$$
Thanks to c), $h\in W$.

But
$$
U_{S_1}(h)\cap\Delta(\Z)\cap W=U_{S_1}(\Delta(5))\cap\Delta(\Z)\cap W=\Delta(5+\12\Z)\cap W=\emptyset
$$
the last equality by (\ref{ex_2}). (Clearly, $d(\cM_{\cB})=1$ and $\cB$ is not taut.)
\end{example}

\subsection{Proof of Theorem~\ref{theo:minimality}}\label{subsec:proof-theo-minimality}

\begin{lemma}\label{lemma:Aurelia}
\footnote{The authors are indebted to A.~Bartnicka for pointing out and proving this lemma.}
If $\cB$ is primitive and $\eta$ is a Toeplitz sequence, then $\cB$ is taut.
\end{lemma}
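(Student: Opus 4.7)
The plan is a proof by contradiction, exploiting the arithmetic characterization of tautness recalled just after Definition~\ref{def:pre-taut}. Suppose $\cB$ is primitive but not taut; then there exist $q\in\N$ and a Behrend set $\cA\subseteq\N\setminus\{1\}$ with $q\,\cA\subseteq\cB$, and $\cA\neq\emptyset$ since $\ddelta(\cM_\cA)=1$. My whole strategy is to examine the Toeplitz property of $\eta$ at the single position $n=q$ and extract a contradiction with $\cA$ being Behrend.

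First I would check that $q\in\cF_\cB$, i.e. $\eta(q)=1$. Indeed, if some $b\in\cB$ divided $q$, then for any $a\in\cA$ (necessarily $a\geqslant 2$) the element $qa\in\cB$ would also be a multiple of $b$, so primitivity of $\cB$ would force $b=qa$, hence $qa\mid q$, hence $a=1$, absurd. Next, the Toeplitz hypothesis yields a positive period $p$ with $\eta(q+kp)=1$ for every $k\in\Z$. Replacing $p$ by the still admissible multiple $qp$, I may assume $q\mid p$, set $p':=p/q$, and factor $q+p\Z=q(1+p'\Z)\subseteq\cF_\cB$. Since $q\,\cA\subseteq\cB$ implies $q\,\cM_\cA\subseteq\cM_\cB$, the set $q(1+p'\Z)$ is disjoint from $q\,\cM_\cA$; cancelling the factor $q$ gives
\begin{equation*}
(1+p'\Z)\cap\cM_\cA=\emptyset\ .
\end{equation*}

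The final step is the density crunch. For each $a\in\cA$, the congruence $a\mid 1+kp'$ admits an integer solution $k$ iff $\gcd(a,p')\mid 1$, so the disjointness above forces $\gcd(a,p')>1$ for every $a\in\cA$. Thus, if $P$ denotes the finite set of prime divisors of $p'$, then $\cA\subseteq\cM_P$, hence $\cM_\cA\subseteq\cM_P=\bigcup_{r\in P}r\Z$. But $\cM_P$ has asymptotic density $1-\prod_{r\in P}(1-1/r)<1$, which is incompatible with $\underline{d}(\cM_\cA)=\ddelta(\cM_\cA)=1$ supplied by Davenport--Erd\"os together with the Behrend hypothesis on $\cA$. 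This contradiction closes the argument.

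I do not foresee a genuine obstacle; the only small manoeuvre worth flagging is the reduction $q\mid p$, which rests on the routine fact that any positive multiple of a Toeplitz period at a given position remains a period at that position. The remaining ingredients---primitivity to place $q$ in $\cF_\cB$, the inclusion $q\,\cM_\cA\subseteq\cM_\cB$, and the density gap between a Behrend set and the multiples of any finite set of primes---are all immediate.
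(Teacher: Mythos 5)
Your argument is correct and follows essentially the same route as the paper's proof: both extract $q$ and a Behrend set $\cA$ with $q\,\cA\subseteq\cB$ from non-tautness, use primitivity to place $q$ in $\cF_\cB$, invoke the Toeplitz property to obtain an arithmetic progression $q+p\Z\subseteq\cF_\cB$, and derive a density contradiction with $\ddelta(\cM_\cA)=1$. The only difference is cosmetic: the paper compares $\underline{d}(\cF_\cB\cap q\Z)\geqslant\lcm(q,p)^{-1}>0$ directly against $d(\cM_\cB\cap q\Z)=q^{-1}$, whereas you cancel the factor $q$ and conclude via $\cA\subseteq\cM_P$ for a finite set of primes $P$ -- both finishes are valid.
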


\begin{proof}
Suppose that $\cB$ is not taut. Then there are $c\in\N$ and a Behrend set $\cA$ such that  $c\cA\subseteq \cB$. Hence
\begin{equation}\label{eq:Aurelia}
d(\cM_\cB\cap c\Z)=c^{-1}\ ,
\end{equation}
because $\cM_\cA$ has density one. As $\cB$ is primitive, $c$ must be $\cB$-free.
So $\eta(c)=1$, and (since $\eta$ is Toeplitz) there exists $m\in\N$ such that $c+m\Z\subseteq \cF_\cB$.
But then
\begin{equation*}
\underline{d}(\cF_\cB\cap c\Z)
\geqslant
\underline{d}((c+m\Z)\cap c\Z)
=
d(\lcm(c,m)\Z)
=\lcm(c,m)^{-1}>0,
\end{equation*}
which contradics \eqref{eq:Aurelia}.
\end{proof}

\begin{lemma}\label{lemma:almost-periodic-Y}
Assume that $\eta\in Y$. If $\eta=\1_{\cF_\cB}$ is almost periodic (i.e. if the orbit closure of $\eta$ is minimal), then
$X_\eta\subseteq Y$.
\end{lemma}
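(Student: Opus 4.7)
The plan is to fix an arbitrary $x \in X_\eta$ and $b \in \cB$, and show that $\supp(x)$ meets exactly $b-1$ residue classes modulo $b$. The footnote in the text already supplies the upper bound $|\supp(x)\bmod b| \leqslant b-1$, so only the matching lower bound $|\supp(x)\bmod b| \geqslant b-1$ remains.

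The core idea is to use minimality in the reverse direction: since $X_\eta$ is minimal, $\eta$ itself lies in the orbit closure of $x$, so there exists a sequence $(m_j)_j \subseteq \Z$ with $\sigma^{m_j}x \to \eta$ in $\{0,1\}^\Z$. Because $\Z/b\Z$ is finite, I would pass to a subsequence and assume $m_j \equiv m_0 \pmod b$ for some fixed $m_0 \in \{0,\dots,b-1\}$.

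Since $\eta \in Y$ and $\supp(\eta) = \cF_\cB$ is disjoint from $b\Z$, the residues hit by $\supp(\eta)$ modulo $b$ are precisely $\{1,\dots,b-1\}$. For each such $r$ I would pick a witness $k_r \in \supp(\eta)$ with $k_r \equiv r \pmod b$, and set $N := \max_r |k_r|$. For $j$ large enough, $\sigma^{m_j}x$ agrees with $\eta$ on $[-N,N]$, so $x(k_r + m_j) = \eta(k_r) = 1$ for every $r \in \{1,\dots,b-1\}$. As $k_r + m_j \equiv r + m_0 \pmod b$, this shows that $\supp(x)$ meets every residue class in $\{1+m_0,\dots,(b-1)+m_0\} \pmod b = (\Z/b\Z) \setminus \{m_0\}$, giving the required lower bound $|\supp(x)\bmod b| \geqslant b-1$.

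I do not expect any serious obstacle here: the argument is a short compactness/softness exercise combining minimality with the hypothesis $\eta \in Y$. The one point that needs a small bit of care is the passage to a subsequence on which $m_j$ is constant modulo $b$, but this is immediate from the finiteness of $\Z/b\Z$; after that, everything reduces to standard product-topology convergence on a finite window.
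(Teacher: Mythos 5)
Your proof is correct and follows essentially the same route as the paper: both arguments combine minimality of $X_\eta$ with the a priori bound $\card(\supp(x)\bmod b)\leqslant b-1$ and a finite block of $\eta$ witnessing $b-1$ residue classes. The only (cosmetic) difference is the direction in which minimality is used — you approximate $\eta$ by shifts of $x$, whereas the paper notes that the witnessing block occurs syndetically in $\eta$ and hence appears in every sufficiently long block of any limit of shifts of $\eta$; the two are equivalent manifestations of minimality.
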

\begin{proof}
Fix $k\geq1$. Since $\eta\in Y$,  the support of $\eta$ taken mod $b_k$ misses exactly one residue class mod $b_k$ (that is, it misses zero). Let $B$ be a block on $\eta$ such that its support mod $b_k$ misses exactly one residue class mod $b_k$. Since $\eta$ is almost periodic, the block $B$ appears on $\eta$ with bounded gaps. It follows that if $C$ is any sufficiently long block that appears on $\eta$, its support misses exactly one residue class. Clearly this property passes to limits in the product topology, so each $y=\lim S^{m_i}\eta$ is also in $Y$.
\end{proof}

In general, we can define a map $\theta:Y\to\prod_{k\geq1}\Z/b_k\Z$ by setting
\begin{equation*}
\theta(y)=g=(g_k)_{k\geq1}\text{ iff } \supp y\cap(b_k\Z-g_k)=\emptyset\text{ for all }k\geq1.
\end{equation*}
Remark 2.51 in \cite{BKKL2015} tells us that
\begin{equation*}
\theta(Y\cap X_\eta)\subset H\ ,
\end{equation*}
while Remark 2.52 says that $\theta$ is continuous.

\begin{corollary}\label{coro:theta}
By the definitions of $\varphi$ and $\theta$, we have $\theta\circ\varphi(h)=h$ provided $\varphi(h)\in Y$. In particular, $\theta(\eta)=0$ and $\theta$ is continuous at $\eta$. Moreover, $\theta$ is equivariant.
\end{corollary}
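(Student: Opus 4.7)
The plan is to treat all four claims by directly unwinding the definitions of $\varphi$ and $\theta$, with the single nontrivial input being the ``uniqueness'' built into the definition of $Y$. First I would observe that by the definition of $\varphi$, the equality $\varphi(h)(n)=0$ holds precisely when there is some $b\in\cB$ with $h_b+n\equiv0\pmod{b}$, equivalently $n\in b\Z-h_b$. In particular, for each $k$ the entire coset $b_k\Z-h_{b_k}$ lies in the complement of $\supp\varphi(h)$.

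If now $\varphi(h)\in Y$, then the definition of $Y$ forces that for each $b_k$ \emph{exactly one} residue class modulo $b_k$ is disjoint from $\supp\varphi(h)$; the previous observation pins this class down as $-h_{b_k}\bmod b_k$. Comparing with the defining relation $\supp y\cap(b_k\Z-g_k)=\emptyset$ for $g=\theta(y)$, this gives $\theta(\varphi(h))_k=h_{b_k}=h_k$, which is the first claim. Specializing to $h=\Delta(0)$ yields $\varphi(h)=\eta$ (since $\varphi(\Delta(0))=\1_{\cF_\cB}$), and therefore $\theta(\eta)=\Delta(0)=0$ under the standing hypothesis $\eta\in Y$. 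Continuity of $\theta$ at $\eta$ then needs no further argument: $\eta\in Y\cap X_\eta$, so this is a special case of the continuity of $\theta$ on its domain cited from Remark 2.52 of \cite{BKKL2015}.

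For equivariance I would compute $\theta(\sigma y)$ from the identity $\supp(\sigma y)=\supp(y)-1$: the unique residue class missed by $\sigma y$ modulo $b_k$ is the class missed by $y$ shifted by $-1$, i.e.\ $-g_k-1\equiv-(g_k+1)\pmod{b_k}$, where $g=\theta(y)$. Hence $\theta(\sigma y)_k\equiv\theta(y)_k+1\pmod{b_k}$, which is exactly $(R_{\Delta(1)}\theta(y))_k$, so $\theta\circ\sigma=R_{\Delta(1)}\circ\theta$ on $Y$.

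There is essentially no obstacle in this argument; the corollary is really a consistency check between the coding map $\varphi$ and the ``decoding'' map $\theta$. The only point worth flagging is that $\theta\circ\varphi=\id$ can fail outside $Y$: if several residue classes modulo some $b_k$ are missed by $\supp\varphi(h)$, then the value singled out by $\theta$ is not uniquely determined by $\varphi(h)$, and the hypothesis $\varphi(h)\in Y$ is exactly what rules this out.
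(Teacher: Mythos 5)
Your proposal is correct and follows essentially the same (and really the only) route as the paper, which states the corollary as a direct consequence of the definitions: the coset $b_k\Z-h_{b_k}$ is always missed by $\supp\varphi(h)$, and membership of $\varphi(h)$ in $Y$ forces it to be the unique missed class, which identifies $\theta(\varphi(h))$ with $h$; the remaining claims follow by specializing to $h=\Delta(0)$, citing Remark~2.52 of \cite{BKKL2015} for continuity, and shifting the missed class by $-1$ for equivariance. Your closing remark that the hypothesis $\varphi(h)\in Y$ is exactly what guarantees uniqueness of the missed residue class correctly identifies the one point where the argument could otherwise fail.
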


For any map $\psi:X\to Y$ denote by $C_\psi\subseteq X$ the set of continuity points of this map.
\begin{lemma}\label{lemma:minimality}
Let $(X,S)$ and $(Y,T)$ be compact dynamical systems and assume that $(X,S)$ is minimal. Let $\psi:X\to Y$ be a map satisfying $\psi\circ S=T\circ\psi$. Then $\overline{\psi(C_\psi)}$ is a minimal subset of $Y$.
\end{lemma}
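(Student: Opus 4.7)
The plan is to verify that $K:=\overline{\psi(C_\psi)}$ is a closed $T$-invariant subset of $Y$, then to show that every point of $\psi(C_\psi)$ is a transitive point for $(K,T)$, and finally to deduce minimality from the fact that the set of transitive points is dense in $K$.

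First I would check that $C_\psi$ is $S$-invariant. Given $x\in C_\psi$ and a sequence $y_n\to Sx$, put $z_n:=S^{-1}y_n\to x$; since $\psi$ is continuous at $x$, $\psi(z_n)\to\psi(x)$, and the intertwining $\psi\circ S=T\circ\psi$ together with continuity of $T$ yields $\psi(y_n)=T\psi(z_n)\to T\psi(x)=\psi(Sx)$, so $Sx\in C_\psi$. The same argument with $S^{-1}$ in place of $S$ gives $S(C_\psi)=C_\psi$, hence $T(\psi(C_\psi))=\psi(S C_\psi)=\psi(C_\psi)$, and so $K$ is closed and $T$-invariant.

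Next, pick any $x_0\in C_\psi$ and set $y_0:=\psi(x_0)$. For an arbitrary $x\in C_\psi$, minimality of $(X,S)$ gives integers $n_k$ with $S^{n_k}x_0\to x$. Continuity of $\psi$ at $x$ then produces
\begin{equation*}
T^{n_k}y_0=\psi(S^{n_k}x_0)\longrightarrow\psi(x),
\end{equation*}
so $\psi(x)\in\overline{\mathrm{orb}_T(y_0)}$. This shows $\psi(C_\psi)\subseteq\overline{\mathrm{orb}_T(y_0)}\subseteq K$, and taking closures yields $\overline{\mathrm{orb}_T(y_0)}=K$. Thus every $y_0\in\psi(C_\psi)$ is a transitive point of $(K,T)$.

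Finally, to deduce minimality, suppose for contradiction that $M\subsetneq K$ is a proper closed $T$-invariant subset. Then $K\setminus M$ is nonempty and open in $K$, so it meets the dense set $\psi(C_\psi)$ in some point $y$. Since $M$ is $T$-invariant and $y\notin M$, the whole orbit of $y$ avoids $M$, yet by the previous step its closure equals $K\supseteq M\neq\emptyset$, a contradiction. Hence $K$ is minimal. The main subtlety—and essentially the only one—is that discontinuities of $\psi$ force us to use density of orbits through continuity points only; everything else is the routine observation that a compact system with a dense set of transitive points is minimal.
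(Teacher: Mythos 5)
Your first two steps are fine: $C_\psi$ is $S$-invariant (for the invertible systems at hand), $K:=\overline{\psi(C_\psi)}$ is closed and $T$-invariant, and your observation that one only needs continuity of $\psi$ at the \emph{limit} point $x$ to get $T^{n_k}\psi(x_0)=\psi(S^{n_k}x_0)\to\psi(x)$ is exactly the right use of the hypothesis; it correctly shows that every point of $\psi(C_\psi)$ has dense orbit in $K$.

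The final step, however, contains a genuine gap. A compact system with a dense set of transitive points need not be minimal: the full shift on $\{0,1\}^\Z$ has a residual set of points with dense orbit, yet it contains the fixed point $(\dots,0,0,0,\dots)$ as a proper nonempty closed invariant subset. Accordingly, the ``contradiction'' you derive is not one: if the orbit of $y$ avoids $M$, its orbit \emph{closure} can perfectly well still contain $M$, because the closure picks up limit points that are not on the orbit. What is actually needed is that \emph{every} point of $K$ --- in particular every limit $y=\lim_n\psi(x_n)$ with $x_n\in C_\psi$, which need not itself lie in $\psi(C_\psi)$ --- has dense orbit in $K$, and this requires an extra argument. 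The paper obtains it by passing to the closure $Z$ of the graph of $\psi$ in $X\times Y$: any nonempty closed invariant $A\subseteq Z$ projects onto all of $X$ by minimality of $(X,S)$, and since the fibre of $Z$ over each $x\in C_\psi$ is the singleton $\{(x,\psi(x))\}$, $A$ must contain $Z_0:=\overline{\{(x,\psi(x)):x\in C_\psi\}}$; hence $Z_0$ is minimal and so is its projection $K$. Alternatively, you could repair your own route: given $y\in K$, choose $x_n\in C_\psi$ with $\psi(x_n)\to y$ and (after passing to a subsequence) $x_n\to x$; for any $x'\in C_\psi$ pick $m_k$ with $S^{m_k}x\to x'$ and use continuity of $\psi$ at $x'$ together with $T^{m_k}y=\lim_n\psi(S^{m_k}x_n)$ to conclude $\psi(x')\in\overline{\mathrm{orb}_T(y)}$. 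Either way, some form of this compactness/graph argument is indispensable and is missing from your write-up.
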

\begin{proof}
Denote by $Z:=\overline{\{(x,\psi(x)): x\in X\}}$ the closure of the graph of $\psi$ and note that a fibre $Z_x=\{(x,y):y\in Z\}$ is a singleton, if and only if $x\in C_\psi$. Let $Z_0:=\overline{\{(x,\psi(x)): x\in C_\psi\}}$. We claim that $Z_0\subseteq A$ whenever $A$ is a non-empty closed $S\times T$-invariant subset of $Z$. Indeed, $\pi_X(A)$ is a non-empty closed $S$-invariant subset of $X$, so $\pi_X(A)=X$ by minimality of $(X,S)$. In particular, $C_\psi\subseteq\pi_X(A)$. As all $A_x\subseteq Z_x$ with $x\in C_\psi$ are singletons, $\{(x,\psi(x)):x\in C_\psi\}\subseteq A$. Hence also $Z_0\subseteq A$.

This shows that $Z_0$ is a minimal subset of $X\times Y$ (and, by the way, that it is the only minimal subset of $Z$). It follows that $\pi_Y(Z_0)$ is a minimal subset of $Y$, and so it remains to show that $\psi(C_\psi)\subseteq \pi_Y(Z_0)$. But, for $x\in C_\psi$, $(x,\psi(x))\in Z_0$, and so $\psi(x)\in\pi_Y(Z_0)$.
\end{proof}

Denote by $C_\varphi$ the set of all points in $H$ at which $\varphi:H\to\{0,1\}^\Z$ is continuous.
\begin{lemma}\label{lemma:C_varphi}
\begin{compactenum}[a)]
\item $C_\varphi=\left\{h\in H:\ (h+\Delta(\Z))\cap\partial W=\emptyset\right\}$.
\item $C_\varphi+\Delta(1)=C_\varphi$.
\item $\overline{\varphi(C_\varphi)}$ is the unique minimal subset $M$.
\end{compactenum}
\end{lemma}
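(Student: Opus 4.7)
For part (a), the plan is to unfold the definition $\varphi(h)(n) = \1_W(R_{\Delta(1)}^n h)$ and exploit the product topology on $\{0,1\}^\Z$. Continuity of $\varphi$ at $h$ is equivalent to continuity of each coordinate map $h\mapsto\1_W(R_{\Delta(1)}^n h)$ at $h$. Since $R_{\Delta(1)}^n$ is a homeomorphism of $H$ and $\1_W$ is a two-valued indicator, the latter is continuous at a point precisely when that point does not lie on $\partial W$. Ranging over $n\in\Z$ yields the description of $C_\varphi$ claimed in (a). Part (b) is then immediate, since the set $\{R_{\Delta(1)}^n h : n\in\Z\}$ is unchanged when $h$ is replaced by $R_{\Delta(1)}h$.

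For part (c) my plan is to invoke Lemma~\ref{lemma:minimality} with $(X,S) = (H, R_{\Delta(1)})$, $(Y,T) = (\{0,1\}^\Z,\sigma)$ and $\psi = \varphi$. The required hypotheses are easy to verify: $(H, R_{\Delta(1)})$ is minimal because $H = \overline{\Delta(\Z)}$ by definition, and a direct calculation gives the equivariance $\varphi\circ R_{\Delta(1)} = \sigma\circ\varphi$ from $\varphi(R_{\Delta(1)}h)(n) = \1_W(R_{\Delta(1)}^{n+1}h) = \varphi(h)(n+1)$. The lemma then asserts that $\overline{\varphi(C_\varphi)}$ is a minimal subset of $\{0,1\}^\Z$.

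To identify it with $M$, I would first show $\overline{\varphi(C_\varphi)} \subseteq X_\eta$: for $h\in C_\varphi$, density of $\Delta(\Z)$ in $H$ gives a sequence $h = \lim_k \Delta(n_k)$, and continuity of $\varphi$ at $h$ yields $\varphi(h) = \lim_k \sigma^{n_k}\eta \in X_\eta$. Since $X_\eta$ has a unique minimal subset $M$ by \cite[Theorem A]{BKKL2015}, and $\overline{\varphi(C_\varphi)}$ is minimal and contained in $X_\eta$, the two must coincide.

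The one genuinely delicate point is ensuring $C_\varphi \neq \emptyset$, since otherwise the application of Lemma~\ref{lemma:minimality} degenerates. I would handle this by exhibiting explicitly at least one orbit in $H$ avoiding $\partial W$, following the strategy of \cite[Lemma 6.3]{KR2015}; this is the main technical obstacle and ultimately reflects the interplay between the topology of $W$ and the minimality structure on $X_\eta$ developed elsewhere in the paper.
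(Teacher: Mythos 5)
Your proof is correct and follows essentially the same route as the paper: part (a) by direct inspection of the product topology (which the paper delegates to \cite[Lemma 6.1]{KR2015}), part (b) as an immediate consequence, and part (c) via Lemma~\ref{lemma:minimality} applied to $\psi=\varphi$, supplemented by the containment $\overline{\varphi(C_\varphi)}\subseteq X_\eta$ and the uniqueness of the minimal subset $M$ from \cite[Theorem A]{BKKL2015} --- details the paper leaves implicit. The one point you defer, nonemptiness of $C_\varphi$, is not really a technical obstacle and needs no explicit orbit: by (a), $H\setminus C_\varphi=\bigcup_{n\in\Z}\left(\partial W-\Delta(n)\right)$ is a countable union of translates of the closed nowhere dense set $\partial W$ (nowhere dense since $W$ is closed, whence $\inn(\partial W)\subseteq \inn(W)\cap\partial W=\emptyset$), so it is meager in the compact group $H$ and Baire's theorem gives $C_\varphi\neq\emptyset$.
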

\begin{proof}
a) This is proved by direct inspection, see e.g. \cite[Lemma 6.1]{KR2015}.\\
b) This is obvious.\\
c) This follows from Lemma~\ref{lemma:minimality}.
\end{proof}
\begin{proof}[Proof of Theorem~\ref{theo:minimality}]
We start with a list of implications, which, when suitably combined, prove the assertions a) - e) of Theorem~\ref{theo:minimality}.
Most of these implications can be proved without assuming that $\cB$ is primitive and that $\overline{\Delta(\Z)\cap W}=W$.
Therefore we indicate explicitly, for which implications we use these extra assumptions.
\\[2mm]
\emph{Proof of the equivalence of B1 -- B4:}
These equivalences follow from Proposition~\ref{prop:Staszek-prop-equiv}.
\\[2mm]
\emph{Proof of B1 $\Rightarrow$ B6:}
Observe first that $0\in H$ belongs to $C_\varphi$ if and only if $\Delta(\Z)\cap\partial W=\emptyset$, see Lemma~\ref{lemma:C_varphi}. But $\Delta(\Z)\cap\partial W=\Delta(\Z)\cap\left(\overline{\inn(W)}\setminus\inn(W)\right)$ in view of B1, and this intersection is empty by Lemma~\ref{lemma:DeltaZ-1}. As $\inn(W)\neq\emptyset$ and as $H=\overline{\Delta(\Z)}$, $\Delta(\Z)\cap W\neq\emptyset$ and hence $\varphi(0)\neq(\dots,0,0,0,\dots)$.
\\[2mm]
\emph{Proof of B6 $\Rightarrow$ B5:}
Let $\cB=\{b_1,b_2,\ldots\}$ and assume (B6) that $0\in C_\varphi$,
i.e. $\Delta(\Z)\cap\partial W=\emptyset$, and $\eta\neq(\dots,0,0,0,\dots)$.
Now, take $n\in\Z$. Either $n\in\cM_{\cB}$ - then $\eta(n)=0$, so $b_s\mid n$ for some $s\geq1$ and $\eta(n+jb_s)=0$ for each $j\in\Z$. Or
$n\in \cF_{\cB}$, i.e. $\Delta(n)\in W$. As $\Delta(\Z)\cap\partial W=\emptyset$ by assumption, this implies $\Delta(n)\in\inn(W)$, so that $n\in\cE=\bigcup_{S\subset\cB}\cF_{\cA_S}$ by Lemma~\ref{lemma:interiorW}. Hence there is a finite subset $S\subset\cB$ such that $n\in\cF_{\cA_S}$. As $\lcm(\cA_S)=\lcm(S)$, this implies
\begin{equation*}
n+\lcm(S)\,\Z\subseteq\cF_{\cA_S}\subseteq\cE\subseteq\cF_\cB\ .
\end{equation*}
Hence $\eta(n+j\lcm(S))=1$ for each $j\in\Z$. This proves that $\eta$ is a Toeplitz sequence
different from $(\dots,0,0,0,\dots)$.
\\[2mm]
\emph{Proof of B5 $\Rightarrow$ B1:}
Assume that $\eta$ is a Toeplitz sequence. Then $\cB$ is taut by Lemma~\ref{lemma:Aurelia}, hence $\overline{\Delta(\Z)\cap W}=W$ by Theorem~\ref{theo:taut-regular}. Now B1 follows from the chain of the next three implications.
\\[2mm]
\emph{Proof of B5 $\Rightarrow$ B8:}
Each Toeplitz sequence is almost periodic \cite{Do}, \cite[Theorem 4]{JK1969},   i.e. its orbit closure is minimal.
\\[2mm]
\emph{Proof of B8 $\Rightarrow$ B7:}
If $X_\eta=M$, then $\eta\in M$, and $\eta\neq(\dots,0,0,0,\dots)$, because otherwise the minimality of $X_\eta$ implies $X_\eta=\{(\dots,0,0,0,\dots)\}$, contradicting $\card(X_\eta)>1$.
\\[2mm]
\emph{Proof of B7 $\Rightarrow$ B1  (assuming that $\overline{\Delta(\Z)\cap W}=W$):}
Assume that $(\dots,0,0,0,\dots)\neq\eta\in M=\overline{\varphi(C_\varphi)}$.
Then $M=X_\eta\subseteq Y$ by Lemma~\ref{lemma:almost-periodic-Y}, and
there is a sequence $h_1,h_2,\dots\in C_\varphi$ such that $\eta=\lim_{i\to\infty}\varphi(h_i)$. Consider $n\in\Z$ with
$\Delta(n)\in W$, i.e. such that $\eta(n)=1$. In particular $\eta=\varphi(0)\neq(\dots,0,0,0,\dots)$. Corollary~\ref{coro:theta} implies
$\lim_{i\to\infty}h_i=\lim_{i\to\infty}\theta(\varphi(h_i))=\theta(\eta)=0$. Then
$1=\eta(n)=\lim_{i\to\infty}\varphi(h_i)(n)$, i.e. $h_i+\Delta(n)\in W$ for all sufficiently large $i$. As $h_i\in C_\varphi$, we have $h_i+\Delta(\Z)\cap\partial W=\emptyset$ (Lemma~\ref{lemma:C_varphi}). Hence
$h_i+\Delta(n)\in \inn(W)$ for all sufficiently large $i$, what implies
that $\Delta(n)=\lim_{i\to\infty}h_i+\Delta(n)\in\overline{\inn(W)}$.
This proves that $\Delta(\Z)\cap W\subseteq\overline{\inn(W)}$.
Hence
$W=\overline{\Delta(\Z)\cap W}\subseteq\overline{\inn(W)}$, i.e. $W$ is topologically regular.
\\[2mm]
\emph{Proof of B7 $\Rightarrow$ B8:}
As $\eta\in M$, also $X_\eta\subseteq M$, and hence $X_\eta= M$. As $\eta\neq(\dots,0,0,0,\dots)$, $X_\eta$ contains no fixed point. Hence $\card(X_\eta)>1$.
\\[2mm]
\emph{Proof of B1 $\Rightarrow$ B9  (assuming that $\cB$ is primitive):}
The window $W$ is aperiodic because of Proposition~\ref{p:ape3}, and it is topologically regular by B1. As B1 $\Rightarrow$ B8, $X_\eta$ is minimal. Therefore Corollary 1a) of \cite{KR2015}, together with Lemmas 4.5 and 4.6 of the same reference, implies B9.
\\[2mm]
\emph{Proof of B9 $\Rightarrow$ B8:}
This is trivial.
\end{proof}

\begin{proposition}\label{eta_in_Y}
Assume that the window $W$ is topologically regular. Then $X_\eta\subseteq Y$.
\end{proposition}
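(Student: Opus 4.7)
The plan is to assemble this proposition from the machinery already developed earlier in the paper rather than argue from scratch. Topological regularity of $W$ is precisely condition (B1) of Theorem~\ref{theo:minimality}. By part (a) of that theorem, (B1) implies that $\cB$ is taut. I would then feed tautness into Theorem~\ref{theo:taut-regular} to obtain the approximation property $\overline{\Delta(\Z)\cap W}=W$. At this point Lemma~\ref{lemma:Y-eta} applies directly and yields the first, weaker conclusion $\eta\in Y$.

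To promote this to the full statement $X_\eta\subseteq Y$, I would invoke Lemma~\ref{lemma:almost-periodic-Y}, which under the hypothesis $\eta\in Y$ transports membership in $Y$ to every point of the orbit closure, provided that $\eta$ is almost periodic (equivalently, that $X_\eta$ is minimal). So what remains is minimality of $X_\eta$. This is exactly condition (B8), and parts (c) and (d) of Theorem~\ref{theo:minimality} give the chain (B1) $\Rightarrow$ (B9) $\Rightarrow$ (B8); equivalently, via (B1) $\Leftrightarrow$ (B5), topological regularity forces $\eta$ to be a non-trivial Toeplitz sequence, and Toeplitz sequences are almost periodic. Combining $\eta\in Y$, minimality of $X_\eta$, and Lemma~\ref{lemma:almost-periodic-Y} closes the argument.

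In short, the proof is a three-line linking of previously proved results: Theorem~\ref{theo:minimality}(a) $+$ Theorem~\ref{theo:taut-regular} $+$ Lemma~\ref{lemma:Y-eta} give $\eta\in Y$; Theorem~\ref{theo:minimality}(c)--(d) give minimality of $X_\eta$; Lemma~\ref{lemma:almost-periodic-Y} then promotes the pointwise statement to a statement about the whole orbit closure. There is no real obstacle here, since the heavy lifting has already been done; the only care needed is to verify that $\eta\neq(\dots,0,0,0,\dots)$ so that (B8) is non-degenerate, which follows because (B1) ensures $\inn(W)\neq\emptyset$ and hence $\Delta(\Z)\cap W\neq\emptyset$ by density of $\Delta(\Z)$ in $H$, giving some $n\in\Z$ with $\eta(n)=\varphi(0)(n)=1$.
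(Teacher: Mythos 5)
Your proposal is correct, and for the second half of the argument it coincides with what the paper does; but for the first half (establishing $\eta\in Y$) you take a genuinely different route. The paper proves $\eta\in Y$ directly: assuming some residue class $r+b_0\Z$ with $b_0\nmid r$ is contained in $\cM_\cB$, it applies Dirichlet's theorem and primitivity to extract a scaled infinite pairwise coprime subset $d\,\cA\subseteq\cB$, contradicting condition (B4), which is equivalent to (B1). You instead chain (B1) $\Rightarrow$ tautness (Theorem~\ref{theo:minimality}(a)) $\Rightarrow$ $\overline{\Delta(\Z)\cap W}=W$ (Theorem~\ref{theo:taut-regular}) $\Rightarrow$ $\eta\in Y$ (Lemma~\ref{lemma:Y-eta}); this is a legitimate path, one the authors themselves point out in the remark following Lemma~\ref{lemma:Y-eta}, and there is no circularity since none of the cited results depend on Proposition~\ref{eta_in_Y}. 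What each approach buys: yours is shorter given the established machinery, but it leans on the full strength of Theorem~\ref{theo:taut-regular} (whose proof needs Hall's Lemma 1.17), whereas the paper's argument for this step is more elementary and self-contained, using only Dirichlet and the (B1)$\Leftrightarrow$(B4) equivalence. For the passage from $\eta\in Y$ to $X_\eta\subseteq Y$ both arguments are the same in substance: minimality of $X_\eta$ via (B1) $\Rightarrow$ (B8) plus the observation that sufficiently long blocks of $\eta$ already realize $b-1$ residues mod $b$; you cite Lemma~\ref{lemma:almost-periodic-Y}, which packages exactly the argument the paper repeats inline. Your final remark verifying $\eta\neq(\dots,0,0,0,\dots)$ is sound (indeed $\Delta(1)\in W$ for infinite primitive $\cB$, so $W\neq\emptyset$ and (B1) forces $\inn(W)\neq\emptyset$), though it is not strictly needed once $\eta\in Y$ and almost periodicity are in hand.
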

\begin{proof}
We start proving that $\eta\in Y$. Assume
the contrary, that is, there are $b_0\in\cB$ and $r\in\{1,\ldots,b_0-1\}$ such that
\begin{equation}\label{eta_in_Y1}
r+b_0\Z\subset\cM_{\cB}.
\end{equation}
 Let $a=\gcd(r,b_0)$ and $r'=r/a$, $b_0'=b_0/a$.
 (\ref{eta_in_Y1}) yields that for any $k\in \N$ there exists $b_k\in\cB$ such that
 \begin{equation*}
 b_k\mid a(r'+kb_0').
 \end{equation*}
  Let $J=\{k\in\N:r'+kb_0'\;\text{is prime}\}$. By Dirichlet Theorem the set $J$ is infinite. As $\cB$ is primitive, $b_k$ does not divide $a=\gcd(r,b_0)$. Hence
 $$
 b_k=\gcd(a,b_k)\,(r'+kb_0')
 $$
 for any $k\in J$.
 Since $a$ has only finitely many divisors, there exists a divisor $a'$ such that
 $$
 b_k=a'(r'+kb_0')
 $$
for infinitely many $k\in J$. Thus we obtain a contradiction with the condition (B4) of Theorem~\ref{theo:minimality}, which is equivalent to (B1) $W=\overline{\inn W}$. Thus $\eta\in Y$.

Assume now that $x\in X_\eta$ and let $b\in\cB$.
As $\eta\in Y$, there is $N_b\in\N$ such that $\card\left(\supp(\eta|_{[0:N_b]})\mod b\right)=b-1$. As $X_\eta$ is minimal by (B8) of Theorem~\ref{theo:minimality}, there is
$n\in\N$ such that $\supp(x|_{[n:n+N_b]})=\supp(\eta|_{[0:N_b]})$. Hence
\begin{displaymath}
\card\left(\supp(x)\mod b\right)\geqslant \card\left(\supp(x|_{[n:n+N_b]})\mod b\right)
\geqslant\card\left(\supp(\eta|_{[0:N_b]})\mod b\right)=
b-1\ ,
\end{displaymath}
so that $x\in Y$, because $\card\left(\supp(x)\mod b\right)\leqslant b-1$ for all $x\in X_\eta$, see Footnote~\ref{foot:b-1} to Remark~\ref{rem:Y}.
\end{proof}

\subsection{Proof of Theorem~\ref{theo:proximal}}\label{subsec:proof-theo-proximal}
The equivalence of C1, C2 and C3 follows from
Lemma~\ref{lemma:interiorW}.
If C1 holds, i.e. if $\inn(W)=\emptyset$, then $\varphi(C_\varphi)=\{(\dots,0,0,0,\dots)\}$ is a shift invariant set \cite[Proposition 3.3d with Remark 3.2b]{KR2015}, so that $M=\overline{\varphi(C_\varphi)}=\{(\dots,0,0,0,\dots)\}$. This is C6, and Theorem 3.8 in \cite{BKKL2015} shows that C4, C5, C6 and C7 are all equivalent.

We finish by proving C5 $\Rightarrow$ C3: Consider any finite $S\subset \cB$. As $\cB\subseteq\cM_{\cA_S}$ by definition of the set $\cA_S$, C5 implies that $1\in\cA_S$.

\section{The sequence $\cB$ and Haar measure}\label{sec:B-Haar}
\subsection{Measure and density}\label{subsec:measure-density}

\begin{lemma}
$m_H(W)=1-\underline d(\cM_\cB)=\bar d(\cF_\cB)$.
\end{lemma}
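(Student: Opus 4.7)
The plan is to approximate $W$ from outside by clopen sets indexed by finite subsets of $\cB$, compute the Haar measure of each approximant in terms of the density of a finite set of multiples, and then pass to the limit using the Davenport--Erd\"os theorem already cited in the paper.

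More precisely, for a finite $S\subset\cB$ set $W_S:=\{h\in H:h_b\ne 0\ (\forall b\in S)\}=H\setminus\bigcup_{b\in S}V_b(0)$ with $V_b(0):=\{h\in H:h_b=0\}$. Taking a filtration $S_1\subset S_2\subset\cdots$ of $\cB$, we have $W=\bigcap_k W_{S_k}$ with $W_{S_1}\supseteq W_{S_2}\supseteq\cdots$, so by continuity of $m_H$ from above, $m_H(W)=\lim_k m_H(W_{S_k})$. The task thus reduces to computing $m_H(W_{S_k})$.

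For any finite $S\subset\cB$, the projection $\pi_S:H\to\prod_{b\in S}\Z/b\Z$ sends $H=\overline{\Delta(\Z)}$ onto the cyclic subgroup $\pi_S(\Delta(\Z))$ of order $\lcm(S)$ (by the Chinese Remainder Theorem), and the pushforward $\pi_S(m_H)$ is the normalised counting measure on this subgroup. Since the event defining $W_S$ is $\pi_S$-measurable, this gives
\begin{equation*}
m_H\!\left(\bigcup_{b\in S}V_b(0)\right)
=\frac{\#\{n\in\{0,\dots,\lcm(S)-1\}:\exists b\in S,\ b\mid n\}}{\lcm(S)}=d(\cM_S),
\end{equation*}
so $m_H(W_S)=1-d(\cM_S)$. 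The main care here is to verify that $\pi_S(m_H)$ really is the uniform measure on $\pi_S(\Delta(\Z))$, which follows because $m_H$ is the unique Haar measure on the compact group $H$, hence its projection onto the finite group $\pi_S(H)$ must be that group's Haar (i.e.\ uniform) measure.

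Finally, applying the previous step along the filtration $(S_k)_k$ and taking the limit gives $m_H(W)=1-\lim_k d(\cM_{S_k})$. The Davenport--Erd\"os theorem (already invoked in the paper) asserts that $\lim_k d(\cM_{S_k})=\ddelta(\cM_\cB)=\underline d(\cM_\cB)$ for any filtration of $\cB$ by finite sets, so $m_H(W)=1-\underline d(\cM_\cB)$. Since $\cF_\cB=\Z\setminus\cM_\cB$, we have $\bar d(\cF_\cB)=1-\underline d(\cM_\cB)$, completing the chain of equalities. I do not expect a substantial obstacle: the only point worth spelling out carefully is the identification of $\pi_S(m_H)$ with uniform measure and the attendant interpretation of $m_H(\bigcup_{b\in S}V_b(0))$ as the natural density $d(\cM_S)$.
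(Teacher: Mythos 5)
Your proof is correct, but it takes a genuinely different route from the paper's. The paper approximates the \emph{complement} $W^c$ from inside by the family $\cU_S$ of all cylinders $U_S(\Delta(n))$ contained in $W^c$; since whether such a cylinder lies in $W^c$ depends on all of $\cB$ and not just on $S$, the count $\#\cU_S$ is not directly computable and must be sandwiched: Lemma~\ref{lemma:easy}a gives $\#\cU_S\geqslant\#(\cM_S\cap\{1,\dots,\lcm(S)\})$, while the upper bound $\#\cU_S\leqslant\#(\cM_{\cB\cap\cA_S}\cap\{1,\dots,\lcm(S)\})$ requires the nontrivial Lemma~\ref{lemma:easy}b, whose proof invokes compactness, tautness of finite primitive sets and Dirichlet's theorem. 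You instead approximate $W$ itself from \emph{outside} by the clopen sets $W_S=H\setminus\bigcup_{b\in S}V_b(0)$, which depend only on $S$ and whose Haar measure is exactly $1-d(\cM_S)$ via the pushforward $\pi_S(m_H)$ being uniform on the cyclic image of order $\lcm(S)$; continuity of $m_H$ from above along a filtration then does the rest. Both arguments ultimately rest on the same input from Davenport--Erd\"os, namely $\sup_S d(\cM_S)=\underline{d}(\cM_\cB)$ over finite $S\subset\cB$. Your version is shorter and more elementary, avoiding Lemma~\ref{lemma:easy}b altogether; the paper's version costs nothing extra in context because Lemma~\ref{lemma:easy} is developed anyway for the tautness/regularity results, and its two-sided description of which cylinders miss $W$ is reused elsewhere. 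One cosmetic point: the order of the cyclic group $\pi_S(\Delta(\Z))$ is $\lcm(S)$ simply because that is the order of the element $(1,\dots,1)$ in $\prod_{b\in S}\Z/b\Z$; no appeal to the Chinese Remainder Theorem is needed there.
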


\begin{proof}
For $S\subset\cB$ denote by
$\cU_S$
the family of all sets $U_S(\Delta(n))$ that are contained in $W^c$ and by $\cup\cU_S$ the union of these sets. Then
\begin{equation*}
m_H(W^c)
=
\sup_S m_H(\cup\cU_S)
=
\sup_S
\frac{\#\cU_S}{\lcm(S)}
\geqslant
\sup_S \frac{\#(\cM_S\cap\{1,\dots,\lcm(S)\})}{\lcm(S)}
=
\sup_S d(\cM_S)
=\underline d(\cM_\cB)
\end{equation*}
by Lemma~\ref{lemma:easy}a,
and similarly
\begin{equation*}
m_H(W^c)
\leqslant
\sup_S \frac{\#(\cM_{\cB\cap\cA_S}\cap\{1,\dots,\lcm(S)\})}{\lcm(S)}
=
\sup_S d(\cM_{\cB\cap\cA_S})
\leqslant
\underline d(\cM_\cB)
\end{equation*}
by Lemma~\ref{lemma:easy}b.
\end{proof}

\begin{corollary}\textbf{\cite[Theorem 4.1]{BKKL2015}}
$\cB$ is  a Besicovich sequence
if and only if $\cF_\cB$ is generic for the Mirsky measure.
(As $n\in\cF_\cB$ iff $\Delta(n)\in W$, it would be more precise to say that the sequence $(\Delta(n))_n$ is generic for the Mirsky measure.)
\end{corollary}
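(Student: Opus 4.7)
The plan is to show that both conditions are equivalent to the identity $d(\cF_\cB)=m_H(W)$, reducing the dynamical statement to an arithmetic one by a finite-approximation argument. Write $\mu_N:=\frac{1}{N}\sum_{n=0}^{N-1}\delta_{\sigma^n\eta}$ and $\tilde\mu_N:=\frac{1}{N}\sum_{n=0}^{N-1}\delta_{\Delta(n)}$. Then $\mu_N=\varphi_*\tilde\mu_N$ and $\nu_\eta=\varphi_*m_H$, and unique ergodicity of the group rotation $(H,R_{\Delta(1)},m_H)$ gives $\tilde\mu_N\to m_H$ weakly on $H$. Genericity of $\cF_\cB$ for $\nu_\eta$ amounts to $\mu_N(C)\to\nu_\eta(C)$ for every cylinder $C\subset\{0,1\}^\Z$, which fails to follow from $\tilde\mu_N\to m_H$ only because $\varphi$ is discontinuous along $\partial W$.

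The easy direction (generic $\Rightarrow$ Besicovich) is immediate: testing against the cylinder $\{x:x(0)=1\}$ one has $\mu_N(\{x:x(0)=1\})=\#(\cF_\cB\cap[0,N))/N$ and $\nu_\eta(\{x:x(0)=1\})=m_H(W)$. So $d(\cF_\cB)$ exists and equals $m_H(W)=1-\underline d(\cM_\cB)$ by the preceding lemma, forcing $\bar d(\cM_\cB)=\underline d(\cM_\cB)$, i.e. $\cB$ is Besicovich.

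For the converse, fix $C=[\epsilon_{-k},\dots,\epsilon_k]$ and set $W_1:=W$, $W_0:=W^c$, so that $A:=\varphi^{-1}(C)=\bigcap_{j=-k}^k(W_{\epsilon_j}-\Delta(j))$. The idea is to approximate $A$ by the clopen $A_S:=\bigcap_{j=-k}^k(W_{S,\epsilon_j}-\Delta(j))$ built from the clopen super-window $W_S:=\{h\in H:h_b\neq0\ \forall b\in S\}\supseteq W$, for finite $S\subset\cB$. A short case analysis (using $W\subseteq W_S$ on the coordinates $j$ with $\epsilon_j=1$ and $W_S^c\subseteq W^c$ on those with $\epsilon_j=0$) shows that in either direction the obstruction lives in $W_S\cap W^c=W_S\setminus W$, yielding the inclusion
\begin{equation*}
A\triangle A_S\;\subseteq\;\bigcup_{j=-k}^k\bigl((W_S\setminus W)-\Delta(j)\bigr).
\end{equation*}
Since $\{n\in\Z:\Delta(n)\in W_S\setminus W\}=\cM_\cB\setminus\cM_S$, this gives $\limsup_N\tilde\mu_N(A\triangle A_S)\leqslant(2k+1)\,\bar d(\cM_\cB\setminus\cM_S)$.

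The crux, and the only place where the Besicovich hypothesis is genuinely used, is now $\bar d(\cM_\cB\setminus\cM_S)\to 0$ along any filtration $S\nearrow\cB$: Davenport--Erd\"os gives $\underline d(\cM_\cB)=\lim_k d(\cM_{S_k})$, and Besicovich upgrades $\underline d$ to $\bar d$, so $\bar d(\cM_\cB\setminus\cM_S)=\bar d(\cM_\cB)-d(\cM_S)\to 0$. Combining this symmetric-difference bound with $\tilde\mu_N(A_S)\to m_H(A_S)$ (unique ergodicity applied to the clopen $A_S$) and $m_H(A_S)\to m_H(A)$ (monotone continuity, since $W_S\searrow W$ and $k$ is fixed), by letting first $N\to\infty$ and then $S\nearrow\cB$ we obtain $\tilde\mu_N(A)\to m_H(A)$, hence $\mu_N(C)\to\nu_\eta(C)$ for every cylinder $C$. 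I expect the case analysis producing the inclusion above to be the one spot requiring care; once it is in place the rest is a triangle estimate, and the whole converse rests on the single arithmetic input that Besicovich together with Davenport--Erd\"os makes the tail $\cM_\cB\setminus\cM_S$ have negligible upper density, precisely killing the mass that could otherwise escape through the discontinuities of $\varphi$ on $\partial W$.
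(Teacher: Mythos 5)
Your proof is correct, and the backward direction (generic $\Rightarrow$ Besicovich) coincides with the paper's one\mbox{-}line argument via the frequency of ones. For the forward direction, however, you take a genuinely different route: the paper merely observes that Besicovich means $d(\cF_\cB)=m_H(W)$, i.e.\ that $\cF_\cB$ has \emph{maximal density}, and then cites \cite[Theorem 5b]{KR2015}, which asserts in the general weak model set framework that maximal density implies genericity for the Mirsky measure. You instead prove that implication from scratch in the $\cB$-free setting: you approximate $W$ from outside by the clopen super-windows $W_S$, verify the symmetric-difference inclusion $A\triangle A_S\subseteq\bigcup_{j}\bigl((W_S\setminus W)-\Delta(j)\bigr)$ for cylinder preimages (your case analysis is right: the obstruction always lands in $W_S\setminus W$), translate $\Delta^{-1}(W_S\setminus W)=\cM_\cB\setminus\cM_S$, and then use exactly the Besicovich hypothesis together with Davenport--Erd\H{o}s to make $\bar d(\cM_\cB\setminus\cM_S)=\bar d(\cM_\cB)-d(\cM_S)\to0$; unique ergodicity of $(H,R_{\Delta(1)})$ on the clopen sets $A_S$ and continuity of $m_H$ along $W_S\searrow W$ finish the triangle estimate. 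What the paper's approach buys is brevity and the placement of the result inside the general theory of weak model sets; what yours buys is self-containedness and transparency about where the hypothesis enters --- it isolates the single arithmetic fact that Besicovich kills the upper density of the tail $\cM_\cB\setminus\cM_S$, which is precisely the mass escaping through the discontinuity set of $\varphi$. Your argument is in spirit close to the paper's own proof of Proposition~\ref{prop:mariusz-stronger} (which uses the same clopen approximation $W_K$ of $W$ and the same Davenport--Erd\H{o}s control of $\cM_\cB\setminus\cM_{\cB_K}$ under tautness), so it fits naturally into the paper's toolkit even though the paper chose to cite \cite{KR2015} here instead.
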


\begin{proof}
If $\cB$ is Besicovich, then $d(\cF_\cB)=m_H(W)$, so that $\cF_\cB$ has maximal density. Hence it is generic for the Mirsky measure, see \cite[Theorem 5b]{KR2015}. On the other hand, if $\cF_\cB$ is generic for (any) measure, then its frequency of ones converges in particular, which means that its asymptotic density exists.
\end{proof}

\begin{lemma}
$m_H(\inn(W))
=
\sup_S d(\cF_{\cA_S})
\leqslant
\underline d(\cE)$
\end{lemma}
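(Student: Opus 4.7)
The plan is to identify $\inn(W)$ with an increasing union of clopen cylinder sets, compute the Haar measure of each such union directly in terms of the density of $\cF_{\cA_S}$, and then conclude by continuity of measure from below. Finally, the inequality on the right is immediate from $\cF_{\cA_S}\subseteq\cE$.

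More precisely, fix a filtration $S_1\subset S_2\subset\dots$ of $\cB$ by finite sets, and for each finite $S\subset\cB$ define
\begin{equation*}
V_S:=\bigcup\bigl\{U_S(h):\;h\in H,\;U_S(h)\subseteq W\bigr\}\ .
\end{equation*}
Since each cylinder $U_S(h)$ is clopen, $V_S$ is open and contained in $\inn(W)$. I would first observe that $\inn(W)=\bigcup_k V_{S_k}$: if $h\in\inn(W)$, there is some finite $S'$ with $U_{S'}(h)\subseteq W$; choosing $k$ with $S'\subseteq S_k$ yields $U_{S_k}(h)\subseteq U_{S'}(h)\subseteq W$, hence $h\in V_{S_k}$. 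Using \eqref{eq:inclusions} one also checks that the $V_{S_k}$ are increasing in $k$.

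Next I would compute $m_H(V_S)$. Since $U_S(\Delta(n))=U_S(\Delta(n'))$ iff $n\equiv n'\pmod{\lcm(S)}$ and, by Lemma~\ref{lemma:interiorW}(a), $U_S(\Delta(n))\subseteq W$ iff $n\in\cF_{\cA_S}$, the set $V_S$ decomposes as a disjoint union of $\#(\cF_{\cA_S}\cap\{0,\dots,\lcm(S)-1\})$ cylinders, each of Haar measure $1/\lcm(S)$. Since every element of $\cA_S$ divides $\lcm(S)$, the set $\cF_{\cA_S}$ is periodic modulo $\lcm(S)$, so its density exists and equals
\begin{equation*}
d(\cF_{\cA_S})=\frac{\#(\cF_{\cA_S}\cap\{0,\dots,\lcm(S)-1\})}{\lcm(S)}=m_H(V_S)\ .
\end{equation*}
Continuity of $m_H$ from below then gives $m_H(\inn(W))=\lim_k d(\cF_{\cA_{S_k}})$. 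Because the chain of inclusions in \eqref{eq:inclusions} implies that $S\mapsto d(\cF_{\cA_S})$ is monotone under inclusion of finite sets, this limit equals $\sup_S d(\cF_{\cA_S})$ where the supremum ranges over all finite $S\subset\cB$.

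For the final inequality, $\cF_{\cA_S}\subseteq\cE$ for every finite $S$, and since $d(\cF_{\cA_S})$ exists we have $d(\cF_{\cA_S})=\underline d(\cF_{\cA_S})\leqslant\underline d(\cE)$. Taking the supremum over $S$ yields $\sup_S d(\cF_{\cA_S})\leqslant\underline d(\cE)$, completing the proof. There is no real obstacle here: the only substantive point is the identification of the Haar measure of a union of cylinders with the density of its associated $\lcm(S)$-periodic $\cB$-free pattern, which is afforded directly by Lemma~\ref{lemma:interiorW}(a) together with the periodicity of $\cF_{\cA_S}$.
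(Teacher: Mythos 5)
Your proposal is correct and follows essentially the same route as the paper: both identify $\inn(W)$ as the increasing union over finite $S$ of the clopen sets formed by the cylinders $U_S(\Delta(n))$ contained in $W$, count these cylinders via Lemma~\ref{lemma:interiorW}a as $\#(\cF_{\cA_S}\cap\{0,\dots,\lcm(S)-1\})$ each of Haar measure $1/\lcm(S)$, and pass to the supremum; the concluding inequality from $\cF_{\cA_S}\subseteq\cE$ is the same. Your write-up merely spells out the periodicity of $\cF_{\cA_S}$ and the monotonicity justifying $\sup=\lim$, which the paper leaves implicit.
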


\begin{proof}
For $S\subset\cB$ denote by
$\cU^o_S$
the family of all sets $U_S(\Delta(n))$ that are contained in $\inn(W)$ and by $\cup\cU^o_S$ the union of these sets.
Recall from Lemma~\ref{lemma:interiorW}a that
$\#\cU^o_S=\#(\cF_{\cA_S}\cap\{1,\dots,\lcm(S)\})$.
 Then
\begin{equation*}
m_H(\inn(W))
=
\sup_S m_H(\cup\cU^o_S)
=
\sup_S
\frac{\#\cU^o_S}{\lcm(S)}
=
\sup_S \frac{\#(\cF_{\cA_S}\cap\{1,\dots,\lcm(S)\})}{\lcm(S)}
=
\sup_S d(\cF_{\cA_S})
\end{equation*}
\end{proof}

\begin{lemma}\label{lemma:Haar-boundary}
$m_H(\partial W)=\inf_S\bar d(\cM_{\cA_S}\setminus\cM_\cB)
\leqslant \inf_S d(\cM_{\cA_S\setminus\cB})$.
\end{lemma}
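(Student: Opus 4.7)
The plan is to reduce $m_H(\partial W)$ to the difference $m_H(W)-m_H(\inn(W))$ and then apply the two density formulas proved just above in the excerpt.

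First I would observe that $W$ is closed in $H$: each set $\{h\in H: h_b\neq 0\}$ is clopen, being the complement of the cylinder $\{h:h_b=0\}$, and $W$ is the countable intersection of these clopen sets. Since $W$ is closed, $\partial W=W\setminus\inn(W)$, and therefore
\begin{equation*}
m_H(\partial W)=m_H(W)-m_H(\inn(W))\ .
\end{equation*}
Substituting the identities $m_H(W)=\bar d(\cF_\cB)$ (from the first lemma of this subsection) and $m_H(\inn(W))=\sup_S d(\cF_{\cA_S})$ (from the second) gives
\begin{equation*}
m_H(\partial W)=\bar d(\cF_\cB)-\sup_{S\subset\cB}d(\cF_{\cA_S})=\inf_{S\subset\cB}\bigl(\bar d(\cF_\cB)-d(\cF_{\cA_S})\bigr)\ .
\end{equation*}

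Next, the inclusion chain \eqref{eq:inclusions} gives $\cF_{\cA_S}\subseteq\cF_\cB$, so $\cF_\cB\setminus\cF_{\cA_S}=\cM_{\cA_S}\setminus\cM_\cB$. Since $\cA_S$ is finite, the set $\cM_{\cA_S}$ is periodic with period $\lcm(S)$, so its density (and therefore the density of $\cF_{\cA_S}$) exists. The elementary identity $\bar d(A)-d(B)=\bar d(A\setminus B)$ when $B\subseteq A$ and $d(B)$ exists (which follows from writing $\1_A=\1_{A\setminus B}+\1_B$ and using that $\limsup$ plus existing limit equals $\limsup$ of the sum) then yields
\begin{equation*}
\bar d(\cF_\cB)-d(\cF_{\cA_S})=\bar d(\cM_{\cA_S}\setminus\cM_\cB)\ ,
\end{equation*}
which after taking $\inf_S$ establishes the stated equality.

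For the final inequality I would verify $\cM_{\cA_S}\setminus\cM_\cB\subseteq\cM_{\cA_S\setminus\cB}$: if $a\in\cA_S$ divides $n$ but no element of $\cB$ does, then in particular $a\notin\cB$, hence $a\in\cA_S\setminus\cB$. Since $\cA_S\setminus\cB$ is finite, $\cM_{\cA_S\setminus\cB}$ is periodic and its density exists; taking $\bar d$ of the inclusion and then $\inf_S$ gives the bound $\inf_S \bar d(\cM_{\cA_S}\setminus\cM_\cB)\leqslant\inf_S d(\cM_{\cA_S\setminus\cB})$. No step looks like a serious obstacle—the only point requiring a bit of care is the subtraction lemma for densities, but this is standard once one notes that $d(\cF_{\cA_S})$ genuinely exists by periodicity.
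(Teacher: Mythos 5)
Your proof is correct and follows essentially the same route as the paper's: the paper's own argument is exactly the chain $m_H(\partial W)=m_H(W)-m_H(\inn(W))=\bar d(\cF_\cB)-\sup_S d(\cF_{\cA_S})=\inf_S\bar d(\cM_{\cA_S}\setminus\cM_\cB)$, relying on the two preceding lemmas. You merely supply details the paper leaves implicit (that $W$ is closed, the subtraction rule for upper densities, and the inclusion $\cM_{\cA_S}\setminus\cM_\cB\subseteq\cM_{\cA_S\setminus\cB}$ justifying the final inequality, which the paper states without proof), all of which are correct.
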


\begin{proof}
\begin{equation*}
\begin{split}
m_H(\partial W)
&=
m_H(W)-m_H(\inn(W))
=
\bar d(\cF_\cB)-\sup_S d(\cF_{\cA_S})
=\inf_S\left(\bar d(\cF_\cB)-d(\cF_{\cA_S})\right)\\
&=
\inf_S\bar d(\cM_{\cA_S}\setminus\cM_\cB)
\end{split}
\end{equation*}
\end{proof}

\subsection{Regular Toeplitz sequences}\label{subsec:Toeplitz}

Let $\cB=\{b_1,b_2,\ldots\}$. For each $k\geq1$, consider the sequence
$$
b_1,\ldots,b_k,c^{(k)}_{k+1},c^{(k)}_{k+2},\ldots,$$
where
$$
c^{(k)}_{k+i}:={\rm gcd}({\rm lcm}(b_1,\ldots,b_k),b_{k+i}),\;i\geq1.$$
Then:
\begin{equation*}\label{cirm1}
c^{(k)}_i|{\rm lcm}(b_1,\ldots,b_k), \mbox{whence $\{c^{(k)}_{k+i}:\:i\geq1\}$ is finite}.\end{equation*}
\begin{equation*}\label{cirm2}
\mathscr{A}_{\{b_1,\ldots,b_k\}}=\{b_1,\ldots,b_k\}\cup\{c^{(k)}_{k+i}:\:i\geq1\}.\end{equation*}
\begin{equation*}\label{cirm3}
c^{(k)}_{k+1}|b_{k+1}.\end{equation*}
\begin{equation*}\label{cirm4}
c^{(k)}_{k+1+i}|c^{(k+1)}_{k+1+i},\;\text{for each }i\geq1.
\end{equation*}
Moreover, following Lemma~\ref{lemma:easy}c, there is an increasing sequence $(k_n)$ such that
\begin{equation}\label{cirm0}
\cB\cap \mathscr{A}_{\{b_1,\ldots,b_{k_n}\}}=\{b_1,\ldots,b_{k_n}\}.
\end{equation}

We assume that $W\subset H$ is topologically regular, so by Remark~\ref{remark:window-toeplitz},  $\eta=\1_{\cF_{\cB}}$ is a Toeplitz sequence.  We set $s_k:={\rm lcm}(b_1,\ldots,b_k)$ and would like now to examine the sequence $(s_k)$ as a periodic structure of $\eta$. More precisely, we would like to see for how many $n\in[1,s_k]$, we have $\eta(n)=\eta(n+js_k)$ for each $j\in\Z$. We call any such $n$ to be ``good''. Now, if $n\in \cF_{\mathscr{A}_{\{b_1,\ldots,b_k\}}}$, then
$n+s_k\Z\subset \cF_{\mathscr{A}_{\{b_1,\ldots,b_k\}}}$, so $n$ is good. Otherwise, $n\in \cM_{\mathscr{A}_{\{b_1,\ldots,b_k\}}}$. Then either
$n\in \cM_{b_1,\ldots,b_k}$ and then clearly $\eta(n+js_k)=0$ for each $j\in\Z$, so again $n$ is good, or
$$n\in \cM_{\{c^{(k)}_{K+i}:\:i\geq1\}}\setminus\cM_{\{b_1,\ldots,b_k\}}.$$
Only for such $n$, we are not sure that $n$ is good. Moreover, note that in view of~\eqref{cirm4}, we have
\begin{equation*}\label{cirm5}
\cM_{\{c^{(k+1)}_{k+1+i}:\:i\geq1\}}\subset \cM_{\{c^{(k)}_{k+i}:\:i\geq1\}},\end{equation*}
so the sequence $(d(\cM_{\{c^{(k)}_{k+i}:\:i\geq1\}}))_k$ is decreasing, and so is the sequence $(\overline{d}(\cM_{\{c^{(k)}_{k+i}:\:i\geq1\}}\setminus\cM_{\cB}))$.
Therefore, by taking into account~\eqref{cirm0}, the infimum of this sequence is equal to the liminf, in fact to the limit and  we have
\begin{equation}\label{cirm6}
\inf_{S\subset \cB}\overline{d}(\cM_{\mathscr{A}_S})\setminus \cM_{\cB})=\liminf_{k\to\infty}\overline{d}(\cM_{\{c^{(k)}_{k+i}:\:i\geq1\}\setminus\cM_{\{b_1,\ldots,b_k\}}})\ .
\end{equation}
\begin{definition}
Let
$\eta=\1_{\cF_{\cB}}$ be a Toeplitz sequence.
It is a \emph{regular Toeplitz sequence} for the periodic structure $(s_k)$, $s_k={\rm lcm}(b_1,\ldots,b_k)$, if the $\liminf$ in \eqref{cirm6} is zero.
\end{definition}
\noindent
Now, using Lemma~\ref{lemma:Haar-boundary}, the identity in \eqref{cirm6} shows the following result.
\begin{proposition}\label{prop:toeplitz} If $W$ is topologically regular, then $\eta=\1_{\cF_{\cB}}$ is a regular Toeplitz sequence for the periodic structure $(s_k)$, $s_k={\rm lcm}(b_1,\ldots,b_k)$, if and only if $m_H(\partial W)=0$.
\end{proposition}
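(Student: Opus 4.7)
The plan is to chain two equivalences that are essentially already in place. The definition stated immediately before the proposition says that $\eta$ is regular for the periodic structure $(s_k)$ precisely when
\[
\liminf_{k\to\infty}\overline{d}\bigl(\cM_{\{c^{(k)}_{k+i}:\,i\geq1\}}\setminus\cM_{\{b_1,\ldots,b_k\}}\bigr)=0.
\]
The identity \eqref{cirm6}, whose justification is given in the paragraphs preceding the proposition, rewrites this liminf as the infimum $\inf_{S\subset\cB}\bar d(\cM_{\cA_S}\setminus\cM_\cB)$. By Lemma~\ref{lemma:Haar-boundary}, this infimum is exactly $m_H(\partial W)$. Composing these three equalities produces the biconditional of the proposition.

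Concretely, I would first spell out why the topological regularity hypothesis $W=\overline{\inn(W)}$ is needed: it is only under this assumption that $\eta$ is a Toeplitz sequence at all (this is condition (B1)$\Leftrightarrow$(B5) of Theorem~\ref{theo:minimality}, invoked in Remark~\ref{remark:window-toeplitz}), so the very notion of ``regular Toeplitz sequence'' for $(s_k)$ is meaningful. Then I would reference the preceding discussion: for each $k$, the positions $n\in[1,s_k]$ that are \emph{not} determined to be ``good'' by their residue mod $s_k$ are precisely those lying in $\cM_{\{c^{(k)}_{k+i}:\,i\geq1\}}\setminus\cM_{\{b_1,\ldots,b_k\}}$. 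Regularity is thus the statement that the upper density of these ``uncertain'' positions tends (along a subsequence, hence in the liminf) to zero.

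Next I would reproduce or cite the derivation of \eqref{cirm6}. The inclusion \eqref{cirm4} makes the sequence $(\overline{d}(\cM_{\{c^{(k)}_{k+i}\}}\setminus\cM_\cB))_k$ non-increasing, so its liminf equals its infimum. The saturation property \eqref{cirm0}, which lets us pass to a filtration with $\cB\cap\cA_{S}=S$, then identifies this infimum with $\inf_{S\subset\cB}\bar d(\cM_{\cA_S}\setminus\cM_\cB)$, since any finite $S\subset\cB$ is dominated by one of the saturated $\{b_1,\ldots,b_{k_n}\}$. Finally Lemma~\ref{lemma:Haar-boundary} gives $m_H(\partial W)=\inf_S\bar d(\cM_{\cA_S}\setminus\cM_\cB)$, closing the loop.

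The main (and essentially only) obstacle is bookkeeping: one must verify that monotonicity in \eqref{cirm4} together with the saturation \eqref{cirm0} really reduces the infimum over arbitrary finite $S\subset\cB$ to a liminf along the canonical filtration $b_1,b_2,\ldots$. Once that is checked, the proposition is an immediate consequence of Lemma~\ref{lemma:Haar-boundary} and the stated definition; no further arithmetic input is required.
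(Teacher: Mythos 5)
Your proposal is correct and follows the paper's own proof exactly: the paper likewise derives the proposition by combining the definition of regularity (the $\liminf$ in \eqref{cirm6} vanishing), the identity \eqref{cirm6} established in the preceding discussion, and Lemma~\ref{lemma:Haar-boundary}. The bookkeeping point you flag (that monotonicity via \eqref{cirm4} together with the saturation \eqref{cirm0} turns the infimum over all finite $S\subset\cB$ into the $\liminf$ along the canonical filtration) is indeed the only substantive verification, and it is the same one the paper carries out just before stating the proposition.
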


\begin{example} Assume that $\{b'_k:\:k\geq1\}$ is a coprime set of odd numbers and let $b_k=2^kb'_k$. Then $c^{(k)}_{k+i}=2^k$ for each $i\geq1$.
Hence, we have even $d(\cM_{\{c^{(k)}_{k+i}:\:i\geq1\}})\to 0$,
in particular $\eta$ is a regular Toeplitz sequence for the periodic structure $(s_k)$ with
$s_k=2^kb_1'\cdots b_k'$.
This example comes from \cite{BKKL2015}.
\end{example}

We will now show that we can obtain Toeplitz sequences also in case $m_H(\partial W)>0$.
\begin{example}
We will construct $\cB=\{b_1,b_2,\ldots\}$ such that this set is thin (hence taut) and
that
$\lim_{k\to\infty}\inf(\{c^{(k)}_{k+i}:\:i\geq1\}\setminus\{b_1,\ldots,b_k\})=\infty$, which, by Proposition~\ref{prop:Staszek-prop-equiv}, implies that $W$ is topologically regular (and hence $\eta$ is Toeplitz by Remark~\ref{remark:window-toeplitz}). Let $\delta_k>0$ and $\sum_{k\geq1}\delta_k<1/16$.

We start with $b_1=2^3$ and set $c^{(1)}_{1+i}=2$ for each $i\geq1$. Suppose that
a sequence
$$
b_1,\ldots, b_k, c^{(k)}_{k+1},c^{(k)}_{k+2},\ldots$$
has been defined. We require that this sequence satisfies:
\begin{equation*}\label{cirm7} c^{(k)}_{k+i}|{\rm lcm}(b_1,\ldots,b_k),\;i\geq1,\end{equation*}
\begin{equation*}\label{cirm8} c^{(k)}_{k+i}\notin \{b_1,\ldots,b_k\}, \;i\geq1,\end{equation*}
\begin{equation*}\label{cirm9} \mbox{for each $i\geq1$, $|\{j\geq1:\:c^{(k)}_{k+j}=c^{(k)}_{k+i}\}|=+\infty$}.\end{equation*}
We will now show how to define $c^{(k+1)}_{k+2}$, $c^{(k+1)}_{k+3}$, $\ldots$ and then $b_{k+1}$.
Recall an elementary lemma.

\begin{lemma}\label{l:cirm1} Let $F_1, F_2$ be finite sets of natural numbers such that ${\rm gcd}(f_1,f_2)=1$ for each $f_i\in F_i$, $i=1,2$.
Then $d(\cM_{F_1\cdot F_2})=d(\cM_{F_1}\cap\cM_{F_2})=d(\cM_{F_1})d(\cM_{F_2})$.
\end{lemma}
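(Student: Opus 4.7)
The plan is to verify the two equalities separately; both follow from elementary divisibility arguments together with the Chinese Remainder Theorem. There is no serious obstacle: the lemma is bookkeeping that exploits coprimality, and the authors themselves label it elementary.

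For the first equality, I would argue directly at the level of integers. If $n\in\cM_{F_1}\cap\cM_{F_2}$, then there exist $f_1\in F_1$ and $f_2\in F_2$ with $f_1\mid n$ and $f_2\mid n$. Since $\gcd(f_1,f_2)=1$ by hypothesis, this is equivalent to $f_1f_2\mid n$, hence $n\in\cM_{F_1\cdot F_2}$. The converse is trivial: if $f_1f_2\mid n$ then each $f_i$ divides $n$. Thus the sets $\cM_{F_1\cdot F_2}$ and $\cM_{F_1}\cap\cM_{F_2}$ coincide pointwise, so in particular their densities coincide (and the density exists, by the periodicity argument below).

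For the second equality, the key observation is that since $F_1,F_2$ are finite, each $\cM_{F_i}$ is a periodic subset of $\Z$ with period $L_i:=\lcm(F_i)$; in particular $d(\cM_{F_i})$ exists and equals $\#(\cM_{F_i}\cap\{0,\dots,L_i-1\})/L_i$. Moreover $\gcd(L_1,L_2)=1$: any prime $p$ dividing $L_1$ must divide some $f_1\in F_1$, and if it also divided $L_2$ it would divide some $f_2\in F_2$, contradicting $\gcd(f_1,f_2)=1$. Therefore $\cM_{F_1}\cap\cM_{F_2}$ is periodic with period $L_1L_2$, and by the Chinese Remainder Theorem the natural isomorphism $\Z/L_1L_2\Z\cong\Z/L_1\Z\times\Z/L_2\Z$ identifies $(\cM_{F_1}\cap\cM_{F_2})\bmod L_1L_2$ with the product $(\cM_{F_1}\bmod L_1)\times(\cM_{F_2}\bmod L_2)$. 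Counting residues yields
\begin{equation*}
d(\cM_{F_1}\cap\cM_{F_2})=\frac{\#(\cM_{F_1}\bmod L_1)\cdot\#(\cM_{F_2}\bmod L_2)}{L_1L_2}=d(\cM_{F_1})\,d(\cM_{F_2}),
\end{equation*}
which is the desired identity. Combining the two steps finishes the proof.
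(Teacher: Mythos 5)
Your proof is correct. The paper states this lemma without proof (it is introduced merely as ``an elementary lemma''), so there is nothing to compare against; your argument --- first identifying $\cM_{F_1\cdot F_2}$ and $\cM_{F_1}\cap\cM_{F_2}$ as literally the same set via the coprimality of the individual elements, then computing the density of the intersection by observing that $\gcd(\lcm(F_1),\lcm(F_2))=1$ and counting residues through the Chinese Remainder Theorem --- is exactly the standard justification the authors are implicitly invoking, and every step checks out.
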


Choose $P\subset \mathscr{P}\setminus{\rm spec}\,\{b_1,\ldots,b_k\}$, so that (by Lemma~\ref{l:cirm1})
\begin{equation*}\label{cirm10}
d(\cM_{P\cdot\{c^{(k)}_{k+i}:\:i\geq1\}})\geq d(\cM_{\{c^{(k)}_{k+i}:\:i\geq1\}})-\delta_k.\end{equation*}
In view of \eqref{cirm9}, $$
\{i\geq2:\:c^{(k)}_{k+i}=c^{(k)}_{k+1}\}=\{r_1,r_2,\ldots\}$$
Let $P=\{q_1,\ldots,q_t\}$. Then set
$$
c^{(k+1)}_{k+r_1+tj}:=c^{(k)}_{k+1}q_1,c^{(k+1)}_{k+r_2+tj}:=c^{(k)}_{k+1}q_2,\ldots, c^{(k+1)}_{k+r_t+tj}:=c^{(k)}_{k+1}q_t$$
for each $j=0,1,\ldots$
If $2\notin \{i\geq2:\:c^{(k)}_{k+i}=c^{(k)}_{k+1}\}$ then repeat the same construction with the set
$\{i\geq2:\: c^{(k)}_{k+i}=c^{(k)}_{k+2}\}$. Since (by~\eqref{cirm7}) the set $\{c^{(k)}_{k+i}:\:i\geq1\}$ is finite, our construction of the sequence $(c^{(k+1)}_{k+1+i})_i$ is done in finitely many steps. Finally, we set $b_{k+1}:=c^{(k)}_{k+1}\prod_{q\in P}q$ (or, if needed,   $b_{k+1}:=c^{(k)}_{k+1}\prod_{q\in P}q^{\alpha_{k+1}}$ for any $\alpha_{k+1}\in\N$). Note that
$$
{\rm gcd}({\rm lcm}(b_1,\ldots,b_k),b_{k+1})=c^{(k)}_{k+1}$$
since $P\cap\{b_1,\ldots,b_k\}=\emptyset$. More than that, by the construction, we also have
$$
{\rm gcd}({\rm lcm}(b_1,\ldots,b_k),b_{k+i})=c^{(k)}_{k+i}\text{ for each }i\geq1.$$
Moreover, it is not hard to see that the new sequence
$$
b_1,\ldots,b_k,b_{k+1},c^{(k+1)}_{k+2},c^{(k+1)}_{k+3},\ldots$$
satisfies \eqref{cirm7}-\eqref{cirm9}. Furthermore, $\cB=\{b_1,b_2,\ldots\}$ satisfies the other requirements mentioned at the beginning of the construction so that $\eta$ is a Toeplitz sequence and $W$ is topologically regular. Note that in our construction
$d(\cM_{\{c^{(1)}_{1+i}:\:i\geq1\}})=1/2$. Moreover, by \eqref{cirm10}
$$
d(\cM_{\{c^{(k)}_{k+i}:\:i\geq1\}})\geq d(\cM_{\{c^{(1)}_{1+i}:\:i\geq1\}})-\sum_{j=1}^k\delta_k\geq \frac14$$
for each $k\geq1$. Finally notice that $d(\cM_{\cB})\leq \sum_{k\geq1}1/b_k$, which (by construction) can be made smaller than 1/8.
It follows that $\lim_{k\to\infty}\overline{d}(\cM_{\mathscr{A}_{\{b_1,\ldots,b_k\}}}\setminus \cM_{\cB})>0$, whence $m_H(\partial W)>0$.
\end{example}

\section{The maximal equicontinuous factor of $X_\eta$}\label{sec:mef}

\subsection{The period groups of $W$ and of $\inn(W)$}\label{subsec:period-group}

Given a subset $A\subseteq H$, denote by
\begin{equation*}
H_A:=\left\{h\in H: W+h=W\right\}
\end{equation*}
the \emph{period group} of $A$. The set $A\subseteq H$ is \emph{topologically aperiodic}, if $H_A=\{0\}$.
The following simple observations are proved in \cite[Lemma 6.1]{KR2016}:
\begin{itemize}
\item $H_A\subseteq H_{\bar A}\cap H_{\inn(A)}$.
\item If $A$ is closed, then $H_{\inn(A)}=H_{\overline{\inn(A)}}$ is closed.
\end{itemize}

\begin{proposition}\label{p:ape3}
Assume that $\cB$ is primitive. Then the window $W$ is topologically aperiodic.
\end{proposition}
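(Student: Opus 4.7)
The plan is to argue by contradiction. Suppose $g \in H_W$ with $g \neq 0$. Then there exists $b_0 \in \cB$ such that $g_{b_0} \neq 0$ in $\Z/b_0\Z$, so $r := -g_{b_0} \bmod b_0$ lies in $\{1,\dots,b_0-1\}$. The goal is to produce some $h \in W$ with $h_{b_0} = r$; then $(h+g)_{b_0} = 0$, so $h+g \notin W$, contradicting $W+g = W$. Note that $W = \bigcap_{b\in\cB}\{h\in H: h_b\neq 0\}$ is closed (indeed, an intersection of clopen sets) hence compact in $H$.

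Reducing the existence of $h$ to a purely arithmetic statement is the main step. Fix an exhausting filtration $(S_k)_k$ of $\cB$ by finite sets containing $b_0$. The sets
\begin{equation*}
W_k(r) := \bigl\{h\in H : h_{b_0}=r,\ h_b\neq 0 \text{ for all } b\in S_k\bigr\}
\end{equation*}
are clopen, nested decreasing, and their intersection is $W\cap\{h:h_{b_0}=r\}$. By compactness it suffices to show each $W_k(r)$ is nonempty, i.e.\ to produce, for every finite $S\subset\cB$ containing $b_0$, an integer $n$ with $n\equiv r\pmod{b_0}$ and $n\not\equiv 0\pmod b$ for every $b\in S\setminus\{b_0\}$; then $\Delta(n)\in W_k(r)$.

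I expect this arithmetic step to be the main obstacle, and I intend to handle it via Dirichlet's theorem on primes in arithmetic progressions, which is where primitivity of $\cB$ finally enters decisively. Suppose no such $n$ exists, i.e.\ $r+b_0\Z\subseteq\cM_{S\setminus\{b_0\}}$. Let $d:=\gcd(r,b_0)$ and write $r=dr'$, $b_0=db_0'$; since $0<r<b_0$ one has $d<b_0$ and $\gcd(r',b_0')=1$. By Dirichlet, there are infinitely many primes $p$ with $p\equiv r'\pmod{b_0'}$, and for each of them $dp\in r+b_0\Z$. Pick such a prime $p$ larger than $\max(S\setminus\{b_0\})$. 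Then $dp\in\cM_{S\setminus\{b_0\}}$, so some $b\in S\setminus\{b_0\}$ divides $dp$; since $\gcd(b,p)=1$, we must have $b\mid d$, and hence $b\mid b_0$. Because $\cB$ is infinite, $1\notin\cB$, so $b\geqslant 2$; primitivity of $\cB$ then forces $b=b_0$, contradicting $b\in S\setminus\{b_0\}$.

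Putting everything together, each $W_k(r)$ is nonempty, so by the finite intersection property an element $h\in W$ with $h_{b_0}=r$ exists, yielding the desired contradiction and proving $H_W=\{0\}$.
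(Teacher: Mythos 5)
Your proof is correct, but it takes a genuinely different route from the paper's. The paper argues in one stroke: given a period $h\neq 0$, pick $b\in\cB$ with $b\nmid h_b$ and set $n:=\gcd(b,h_b)$; primitivity forces $n\in\cF_\cB$ (any $b'\in\cB$ dividing $n$ would divide $b$, hence equal $b$, hence divide $h_b$), so $\Delta(n)\in W$, while B\'ezout produces $x$ with $b\mid n-xh_b$, so $\Delta(n)-xh\notin W$, contradicting $W-xh=W$. That argument is purely elementary (B\'ezout plus primitivity) and needs no limit point of $W$ other than an explicit $\Delta(n)$. You instead prove the stronger statement that $W$ meets every slice $\{h:h_{b_0}=r\}$ with $r\neq 0$, reducing it by compactness to the arithmetic claim that $r+b_0\Z\not\subseteq\cM_{S\setminus\{b_0\}}$ for finite $S$, which you settle with Dirichlet's theorem applied to large primes $p\equiv r/\gcd(r,b_0)\pmod{b_0/\gcd(r,b_0)}$ and primitivity. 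This costs you Dirichlet and a compactness step where the paper needs neither, but it buys a sharper conclusion: your intermediate claim is essentially the $W$-level analogue of the paper's Lemma~\ref{lemma:Y-eta} (the statement underlying $\eta\in Y$), obtained here without any hypothesis beyond primitivity. All the details check out, including the reduction of ``no valid $n$'' to $r+b_0\Z\subseteq\cM_{S\setminus\{b_0\}}$ and the deduction $b\mid d\mid b_0\Rightarrow b=b_0$ from primitivity.
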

\begin{proof} Suppose that $h=(h_b)_{b\in\cB}\neq0$ and
\begin{equation}\label{ape4}
W+h=W\ .
\end{equation}
Since $h\neq0$, there is $b\in\cB$ such that $b$ does not divide $h_b$. Let $n:=\gcd(b,h_b)$.
Then $n\in\cF_{\cB}$, as otherwise there exists $b'\in \cB$ such that $b'\mid n$; but then $b'\mid b$ a contradiction ($\cB$ is assumed to be primitive). Hence $\Delta(n)\in W$. There are $x,y\in\Z$ such that $n=xh_b+yb$, whence $b\mid n-xh_b$. It follows that
$\Delta(n)-xh\notin W$, a contradiction with~\eqref{ape4}.
\end{proof}

If $W$ is topologically regular, then clearly $\inn(W)$ is topologically aperiodic, as well. Otherwise $H_{\inn(W)}$ may be non-trivial, as we will see in the course of this section.

Recall from \eqref{eq:Bprim} that for any set $A\subseteq\N$,
\begin{equation*}
\cM_A = \cM_{\prim A}
\end{equation*}
If $\prim A$ is finite, then $\cM_A$ is a union of finitely many arithmetic progressions. Let $c_A$ denote the period of $\cM_A$, that is, the least natural number such that $c_A+\cM_A=\cM_A$.

\begin{lemma}\label{period-of-MA} Assume that $A,B\subset\N$ are finite
\begin{compactenum}[a)]
\item If $c+\cM_A=\cM_A$ for some $c\in\N$, then $\lcm(\prim A)\mid c$.
\item $c_A=\lcm(\prim A)$
\item if $A\subset\cM_B$ then $c_B\mid c_A$
\end{compactenum}
\end{lemma}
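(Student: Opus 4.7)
The main content of this lemma is part (a); parts (b) and (c) will fall out formally. My plan for (a) is to invoke Dirichlet's theorem on primes in arithmetic progressions. Since $\cM_A=\cM_{\prim A}$, I may assume without loss of generality that $A=\prim A$. Fix $b\in A$; the goal is to show $b\mid c$. From $b\in\cM_A$ together with $c+\cM_A=\cM_A$, the entire progression $b+c\Z$ lies in $\cM_A$. Set $g:=\gcd(b,c)$. The reduced progression $b/g+(c/g)\Z$ has coprime parameters, so by Dirichlet it contains infinitely many primes $p$; equivalently, $b+c\Z$ contains infinitely many integers of the form $gp$ with $p$ prime and arbitrarily large. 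Each such $gp$ lies in $\cM_A$, hence is divisible by some $a\in A$. Taking $p>\max A$ forces $\gcd(a,p)=1$, so $a\mid g$; in particular $a\mid b$, and primitivity of $A$ collapses this to $a=b$. Thus $b\mid g\mid c$, and as $b\in A$ was arbitrary, $\lcm(\prim A)\mid c$.

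Part (b) is then immediate. On the one hand, $\lcm(\prim A)$ is visibly a period of $\cM_A=\bigcup_{a\in\prim A}a\Z$, since each summand $a\Z$ is invariant under any translation by a multiple of $a$, and $a\mid\lcm(\prim A)$ for every $a\in\prim A$; this gives $c_A\mid\lcm(\prim A)$. On the other hand, (a) applied with $c=c_A$ gives $\lcm(\prim A)\mid c_A$. The two divisibilities force equality.

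For part (c), by (b) the claim reduces to showing $\lcm(\prim B)\mid\lcm(\prim A)$ under the hypothesis $A\subseteq\cM_B$ (equivalently $\cM_A\subseteq\cM_B$). The cleanest route I see is to verify that $c_A$ is itself a period of $\cM_B$; granted this, applying (a) to $B$ with $c=c_A$ immediately yields $c_B\mid c_A$. The technical obstacle is that the inclusion $\cM_A\subseteq\cM_B$ does not a priori control the complement $\cM_B\setminus\cM_A$, so propagating periodicity from the smaller set to the larger one is not automatic. Exploiting the structure $\cM_B=\bigcup_{b\in B}b\Z$, the task reduces to showing $b\mid c_A$ for each $b\in B$ — i.e.\ that each $b\Z$ is $c_A$-invariant — which I would attack by rerunning the Dirichlet argument from (a) on the progressions $b+c_A\Z$, using the specific structural features of the setup (for instance, when the lemma is applied with $A=\cA_{S'}$ and $B=\cA_{S}$ for nested $S\subseteq S'$, one has for each $b'\in\cB$ the coherent pair $\gcd(b',\lcm S)\mid\gcd(b',\lcm S')$, which feeds the required divisibilities). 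This coherence between the two generating sets is what I expect to be the essential input making (c) go through.
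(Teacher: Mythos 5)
Your parts (a) and (b) are correct and follow the same route as the paper. The paper's proof of (a) is exactly your argument in compressed form: from $a+c\Z\subseteq\cM_{\prim{A}}$ it extracts some $a'\in\prim{A}$ with $a'\mid\gcd(a,c)$ (this extraction is your Dirichlet step, which the paper leaves implicit), and primitivity then forces $a'=a$ and $a\mid c$. Part (b) is the same pair of divisibilities in both texts.

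Part (c) is where your proposal stops short of a proof, and your hesitation is well founded: \emph{as stated, (c) is false}. Take $A=\{4\}$ and $B=\{2,3\}$. Then $A\subset\cM_B=2\Z\cup3\Z$, but $c_A=4$ and $c_B=\lcm(2,3)=6$, and $6\nmid4$. So no argument from the bare hypothesis $A\subset\cM_B$ can succeed, and the circularity you ran into (to run the Dirichlet argument on $b+c_A\Z$ you would first need to know that this progression lies in $\cM_B$, i.e.\ that $c_A$ is already a period of $\cM_B$) is not an accident. It is worth noting that the paper's own one-line proof of (c) commits precisely the inference you declined to make: it asserts that $\prim{A}\subset\cM_{\prim{B}}$ implies $\lcm(\prim{B})\mid\lcm(\prim{A})$, which the same example refutes. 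The extra structure you gesture at is indeed what saves the only application of (c) in the paper, where $A=\cA_{S_{k+1}}$ and $B=\cA_{S_k}$: by Lemma~\ref{A-goes-back}, $B=\{\gcd(a,\lcm(S_k)):a\in A\}$, so for every $b=\gcd(a,\lcm(S_k))\in B$ one can pick $a''\in\prim{A}$ with $a''\mid a$ and obtain $\gcd(a'',\lcm(S_k))\in B$ dividing $b$. Hence $\cM_B=\cM_{B'}$ with $B'=\{\gcd(a'',\lcm(S_k)):a''\in\prim{A}\}$, every generator in $B'$ divides $\lcm(\prim{A})=c_A$, so $\cM_B$ is $c_A$-periodic and part (a) applied to $B$ gives $c_B\mid c_A$. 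In short: (a) and (b) are fine; (c) is left unproved in your proposal, but it cannot be proved in the stated generality --- the hypothesis must be strengthened (e.g.\ to the gcd-form above) before your strategy, or any other, can close it.
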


\begin{proof}
\begin{compactenum}[a)]
\item For any $a\in \prim A$ we have $a+c\Z\subseteq\cM_{\prim A}$, from which it follows that
there exists $a'\in \prim A$ such that $a'\mid \gcd(a,c)$. But, as $\prim A$ is primitive, that means that $a'=a$ and $a\mid c$. We conclude that $\lcm(\prim A)\mid c$.
\item Clearly $\cM_A+\lcm(\prim A)=\cM_A$, thus $c_A\mid \lcm(\prim A)$ and the assertion follows from a).
\item If $A\subset\cM_B$ then $\prim A\subset \cM_{\prim B}$, hence $\lcm(\prim B)\mid \lcm(\prim A)$, and we finish by b).
\end{compactenum}
\end{proof}

\begin{lemma}\label{A-goes-back}
Assume that $S\subseteq S'$ are finite subsets of $\cB$, then $\cA_S=\{\gcd(a,\lcm(S)):a\in\cA_{S'}\}$.
\end{lemma}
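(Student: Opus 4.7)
The plan is to reduce the claimed set equality to the following elementary divisibility identity: whenever $m\mid n$ in $\N$, one has
\begin{equation*}
\gcd(b,m)=\gcd(\gcd(b,n),m)\quad\text{for every }b\in\N.
\end{equation*}
Since $S\subseteq S'$, we have $\lcm(S)\mid\lcm(S')$, so applying this identity with $m=\lcm(S)$ and $n=\lcm(S')$ immediately yields, for every $b\in\cB$,
\begin{equation*}
\gcd(b,\lcm(S))=\gcd(\gcd(b,\lcm(S')),\lcm(S)).
\end{equation*}
The right-hand side has the form $\gcd(a,\lcm(S))$ with $a=\gcd(b,\lcm(S'))\in\cA_{S'}$, while the left-hand side ranges over all of $\cA_S$ as $b$ ranges over $\cB$. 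This gives the inclusion $\cA_S\subseteq\{\gcd(a,\lcm(S)):a\in\cA_{S'}\}$.

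For the reverse inclusion, any $a\in\cA_{S'}$ is of the form $a=\gcd(b,\lcm(S'))$ for some $b\in\cB$, and the same identity then shows $\gcd(a,\lcm(S))=\gcd(b,\lcm(S))\in\cA_S$. So both inclusions follow from the single identity.

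The identity itself is routine: $\gcd(b,m)$ divides $b$ and divides $m\mid n$, hence divides $\gcd(b,n)$ and also $m$, so it divides $\gcd(\gcd(b,n),m)$; conversely $\gcd(\gcd(b,n),m)$ divides both $b$ (via $\gcd(b,n)$) and $m$, so it divides $\gcd(b,m)$. There is no obstacle here at all; the only point worth emphasizing in the write-up is the use of $S\subseteq S'\Rightarrow\lcm(S)\mid\lcm(S')$, which is what makes the identity applicable.
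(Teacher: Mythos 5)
Your proof is correct and follows essentially the same route as the paper, which also reduces the claim to the identity $\gcd(b,\lcm(S))=\gcd(\gcd(b,\lcm(S')),\lcm(S))$ using $\lcm(S)\mid\lcm(S')$. You merely spell out the two inclusions and the verification of the identity in more detail than the paper's one-line argument.
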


\begin{proof}
Since $\lcm(S)\mid \lcm(S')$, $\gcd(b,\lcm(S))=\gcd(\gcd(b,\lcm(S')),\lcm(S))$  for any $b\in\cB$ and the assertion follows.
\end{proof}

Let $S_1\subset S_2\subset\ldots\subset S_k\subset\ldots$ be a filtration of $\cB$ with finite sets and denote
\begin{equation*}
s_k:=\lcm(S_k),\; c_k:=c_{\cA_{S_k}}
\end{equation*}
By Lemma \ref{period-of-MA} c) we have $c_l\mid c_{l+1}$ for any $l$. It follows that, for any $k$, the sequence $(\gcd(s_k,c_l))_{l\ge 1}$ stabilizes on a
divisor $d_k$ of $s_k$.
Clearly, since $c_k\mid s_k$,
\begin{equation}\label{nowy_1G}
c_k\mid d_k\mid s_k\ .
\end{equation}
Observe that
\begin{equation}\label{nowy_2G}
d_k=\gcd(s_k,d_{k+1})\ .
\end{equation}
Indeed, there is $l_0\in\N$ such that $d_{k+1}=\gcd(s_{k+1},c_l)$ for all $l>l_0$. Since $s_k\mid s_{k+1}$,
we get
$$
\gcd(s_k,c_l)=\gcd(s_k,\gcd(s_{k+1},c_l))\ .
$$
It follows that $\gcd(s_k,c_l)=\gcd(s_k,d_{k+1})$ for $l>l_0$, and (\ref{nowy_2G}) follows.

By applying (\ref{nowy_2G}) we prove by induction that
\begin{equation}\label{nowy_3G}
d_k=\gcd(s_k,d_{k+j})\ .
\end{equation}
for $j\ge 0$.

\begin{lemma}\label{lemma:equivalent}
Let $(n_k)_{k\in\N}$ be a sequence of integers.
The following are equivalent:
\begin{equation}\label{eq:equiv1S}
\forall k\in\N:\ c_{k}\mid n_{k}\;\text{ and }\; s_k\mid n_{k+1}-n_k\ ,
\end{equation}
and
\begin{equation}\label{eq:equiv2}
\forall k\in\N:\ d_{k}\mid n_{k}\;\text{ and }\; s_k\mid n_{k+1}-n_k\ .
\end{equation}
\end{lemma}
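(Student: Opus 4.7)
The second condition is the same in both \eqref{eq:equiv1S} and \eqref{eq:equiv2}, so the task reduces to showing, under the common hypothesis $s_k\mid n_{k+1}-n_k$ for all $k$, that $c_k\mid n_k\;(\forall k)$ is equivalent to $d_k\mid n_k\;(\forall k)$.

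The direction \eqref{eq:equiv2} $\Rightarrow$ \eqref{eq:equiv1S} is immediate from \eqref{nowy_1G}, since $c_k\mid d_k$ gives $c_k\mid n_k$ whenever $d_k\mid n_k$.

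For the converse, fix $k$. By definition of $d_k$ as the eventual value of the sequence $\gcd(s_k,c_{k+j})$, we may pick $j\geqslant0$ large enough that
\begin{equation*}
d_k=\gcd(s_k,c_{k+j})\ .
\end{equation*}
From the telescoping identity $n_{k+j}-n_k=\sum_{i=0}^{j-1}(n_{k+i+1}-n_{k+i})$ together with $s_k\mid s_{k+i}\mid n_{k+i+1}-n_{k+i}$ for each $0\leqslant i<j$, we obtain $s_k\mid n_{k+j}-n_k$. Hypothesis \eqref{eq:equiv1S} also gives $c_{k+j}\mid n_{k+j}$. Since $d_k\mid s_k$ and $d_k\mid c_{k+j}$, both relations reduce modulo $d_k$ to $n_k\equiv n_{k+j}\pmod{d_k}$ and $n_{k+j}\equiv0\pmod{d_k}$, hence $d_k\mid n_k$, as desired.

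There is no real obstacle: the argument is a direct application of the stabilization property of $\gcd(s_k,c_{k+j})$ to $d_k$ combined with a telescoping divisibility. The role of \eqref{nowy_3G} is implicit in the fact that $d_k\mid c_{k+j}$ for all sufficiently large $j$, which is exactly what one needs to pull information from the ``deep'' index $k+j$ back to index $k$.
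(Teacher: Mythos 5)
Your proof is correct and uses essentially the same idea as the paper: pull the divisibility $c_{k+j}\mid n_{k+j}$ back from index $k+j$ to index $k$ via the congruences $s_i\mid n_{i+1}-n_i$, then invoke the stabilization $\gcd(s_k,c_{k+j})=d_k$. The only difference is organizational -- the paper proves $\gcd(s_k,c_{k+j})\mid n_k$ for all $k$ by induction on $j$, stepping one index at a time, whereas you telescope directly from $k$ to $k+j$ in a single step, which is slightly more streamlined but mathematically the same argument.
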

\begin{proof}
As $c_{k}\mid d_{k}$, \eqref{eq:equiv2} implies \eqref{eq:equiv1S}. Conversely, assume that \eqref{eq:equiv1S} holds. We show inductively that for all $j\geqslant0$
\begin{equation}\label{eq:inductive}
\forall k\in\N:\ \gcd(s_{k},c_{k+j})\mid n_{k}\ ,
\end{equation}
and this implies \eqref{eq:equiv2} immediately.

For $j=0$, \eqref{eq:inductive} follows from \eqref{eq:equiv1S}, because $c_{k}\mid s_{k}$.
So suppose that \eqref{eq:inductive} holds for some $j\geqslant0$. Then
\begin{displaymath}
\begin{split}
n_{k+1}&=0\;\mod \gcd(s_{k+1},c_{k+1+j})\;\text{and}\\
n_{k+1}&=n_{k}\mod s_k\ .
\end{split}
\end{displaymath}
Hence $n_k=0\mod \gcd(s_k,s_{k+1},c_{k+1+j})=\gcd(s_k,c_{k+j+1})$ for all $k\in\N$,
i.e. \eqref{eq:inductive} for $j+1$.
\end{proof}

Recall that  $H_{\inn(W)}=\{h\in H:\inn(W)+h=\inn(W)\}$ denotes the period group of $\inn(W)$.

\begin{proposition}\label{prop:periods}
\begin{compactenum}[a)]
\item $h\in H_{\inn(W)}$ if and only if $h=\lim_k\Delta(n_k)$ for some sequence
$(n_k)_k$ satisfying
\begin{equation}\label{eq:n_k}
\forall k\in\N:\ d_k\mid n_k \;\text{ and }\; s_k\mid n_{k+1}-n_k\ .
\end{equation}
Moreover, sequences $(n_k)_k$ can be defined inductively: For $n_1$ there are $s_1/d_1$ choices and, given $n_1,\dots,n_k$, there are precisely $s_{k+1}/\lcm(s_k,d_{k+1})$ many choices for $n_{k+1}$.
\item $H_{\inn(W)}=\{0\}$ if and only if $s_k=d_k$ for all $k\in\N$.
\end{compactenum}
\end{proposition}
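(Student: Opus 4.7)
The plan is to parametrize each $h \in H$ by the coherent sequence $(n_k)_k$ of representatives modulo $s_k$, so that $h = \lim_k \Delta(n_k)$ and $s_k \mid n_{k+1} - n_k$ holds automatically. The main task is to translate the condition $\inn(W) + h = \inn(W)$ into level-wise divisibility conditions on the $n_k$, and the key tool is that each cylinder $U_{S_k}(\Delta(n))$ is \emph{clopen}: hence $U_{S_k}(\Delta(n)) \subseteq \inn(W)$ if and only if $U_{S_k}(\Delta(n)) \subseteq W$, which by Lemma~\ref{lemma:interiorW}(a) is equivalent to $n \in \cF_{\cA_{S_k}}$.

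I will first establish the equivalence ``$h \in H_{\inn(W)}$ if and only if $c_k \mid n_k$ for all $k$'', and then invoke Lemma~\ref{lemma:equivalent} to pass to the divisibility $d_k \mid n_k$ of \eqref{eq:n_k}. For the forward direction: translation gives $U_{S_k}(\Delta(n)) + h = U_{S_k}(\Delta(n+n_k))$, so if $n \in \cF_{\cA_{S_k}}$ then $U_{S_k}(\Delta(n+n_k))$ is a clopen subset of $\inn(W) + h = \inn(W)$, which forces $n+n_k \in \cF_{\cA_{S_k}}$. Hence $\cF_{\cA_{S_k}} + n_k \subseteq \cF_{\cA_{S_k}}$; applying the same argument to $-h \in H_{\inn(W)}$ yields equality, and since $c_k$ is the minimal period of $\cF_{\cA_{S_k}}$ (with $c_k = 1$ precisely when $\cF_{\cA_{S_k}} = \emptyset$), this forces $c_k \mid n_k$. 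The converse is immediate: if $c_k \mid n_k$ for all $k$, then translation by $h$ permutes the clopen cylinders $\{U_{S_k}(\Delta(n)) : n \in \cF_{\cA_{S_k}}\}$ at each level, and since these exhaust $\inn(W)$ by Lemma~\ref{lemma:interiorW}, the set $\inn(W)$ is preserved.

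The counting is then a CRT bookkeeping. For $n_1$ modulo $s_1$ with $d_1 \mid n_1$ there are exactly $s_1/d_1$ residues. Given $n_k$, the simultaneous conditions $d_{k+1} \mid n_{k+1}$ and $n_{k+1} \equiv n_k \pmod{s_k}$ are consistent because $\gcd(d_{k+1}, s_k) = d_k$ (by \eqref{nowy_3G} with $j=1$) divides $n_k$ by the inductive hypothesis; they then have $s_{k+1}/\lcm(d_{k+1}, s_k)$ solutions modulo $s_{k+1}$. Part (b) follows: $H_{\inn(W)} = \{0\}$ means there is exactly one admissible sequence, so each counting factor equals $1$, i.e.\ $s_1 = d_1$ and $s_{k+1} = \lcm(d_{k+1}, s_k)$ for all $k$. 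Since $s_k \mid s_{k+1}$ forces $d_k \mid d_{k+1}$ (directly from $d_k = \lim_l \gcd(s_k, c_l)$), an easy induction gives the equivalence with $s_k = d_k$ for every $k$.

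The main technical obstacle is the forward direction of (a): a naive level-by-level descent fails because the sets $\cF_{\cA_{S_k}}$ are only \emph{increasing} in $k$, so a cylinder that is observed to remain inside $\inn(W)$ at some high level $k'$ does not automatically yield membership of the corresponding residue in $\cF_{\cA_{S_k}}$ at a lower level $k$. The correct argument bypasses this difficulty entirely by exploiting the clopen nature of the $U_{S_k}(\Delta(n))$, so that containment in the open set $\inn(W)$ collapses to containment in $W$ itself, and thence to the desired divisibility condition.
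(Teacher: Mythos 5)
Your proposal is correct and follows essentially the same route as the paper: reduce $h\in H_{\inn(W)}$ to $\cF_{\cA_{S_k}}+n_k=\cF_{\cA_{S_k}}$ for all $k$ via the action of $h$ on the cylinders $U_{S_k}(\Delta(n))$ (using Lemma~\ref{lemma:interiorW}a and openness to pass between containment in $W$ and in $\inn(W)$), translate this to $c_k\mid n_k$, invoke Lemma~\ref{lemma:equivalent} to get $d_k\mid n_k$, and finish with the same CRT counting and Remark~\ref{remark:factors} for part~b). The only (harmless) divergence is that you obtain the identity $U_{S_k}(\Delta(m))+h=U_{S_k}(\Delta(m+n_k))$ directly as an equality of cosets, whereas the paper first proves an inclusion and upgrades it to equality by a Haar-measure argument; both are valid.
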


\begin{remark}\label{remark:factors}
Observe that, in view of \eqref{nowy_2G},
\begin{equation*}
\frac{s_k}{d_k}\cdot \frac{s_{k+1}}{\lcm(s_k,d_{k+1})}
=
\frac{s_k\, s_{k+1}\gcd(s_k,d_{k+1})}{d_k\,s_k\,d_{k+1}}
=
\frac{s_k\, s_{k+1}\,d_k}{d_k\,s_k\,d_{k+1}}
=
\frac{s_{k+1}}{d_{k+1}},
\end{equation*}
so that
\begin{equation*}
\frac{s_k}{d_k}\mid \frac{s_{k+1}}{d_{k+1}}
\end{equation*}
\begin{equation*}
\frac{s_k}{d_k}
=
\frac{s_1}{d_1}\cdot\prod_{j=1}^{k-1}\frac{s_{j+1}}{\lcm(s_j,d_{j+1})}.
\end{equation*}
\end{remark}

\begin{proof}[Proof of Proposition~\ref{prop:periods}]
a) For each $S_k$ denote by $W_k:=\bigcup_{n\in\cF_{\cA_{S_k}}}U_{S_k}(\Delta(n))$. Then $\inn(W)$ is the increasing union of the sets $W_k$, see Lemma~\ref{lemma:interiorW},
and $U_{S_k}(\Delta(n))\subseteq W_k$ if and only if
$U_{S_k}(\Delta(n))\subseteq\inn(W)$.
Let $h=\lim_k\Delta(n_k)$, where $n_k$ stands for $n_{S_k}$, which was defined in Lemma~\ref{lemma:interiorW}b. Then
\begin{equation}
\forall k\in\N:\  s_k\mid n_{k+1}-n_k\ ,
\end{equation}
and
$h\in H_{\inn(W)}$, if and only if
\begin{equation}\label{eq:W_k-Delta_n_k}
\forall k\in\N:\ \cF_{\cA_{S_k}}+n_k=\cF_{\cA_{S_k}}\ .
\end{equation}
Indeed, let $k\in\N$, $m\in\cF_{\cA_{S_k}}$, and let $g=(g_b)_{b\in\cB}$ be any element from $U_{S_k}(\Delta(m))\subseteq \inn(W)$. Then $g_b=m\mod b$ for all $b\in S_k$. Assume now that $h\in H_{\inn(W)}$. Then $g+h\in\inn(W)$ and
$(g+h)_b=m+n_k\mod b$ for all $b\in S_k$, so that $g+h\in U_{S_k}(\Delta(m+n_k))$.
Hence $U_{S_k}(\Delta(m))+h\subseteq U_{S_k}(\Delta(m+n_k))=U_{S_k}(\Delta(m))+\Delta(n_k)$. In particular, $U_{S_k}(\Delta(m))$ and $U_{S_k}(\Delta(m+n_k))$ have identical Haar measure, and so do $U_{S_k}(\Delta(m))+h$ and $U_{S_k}(\Delta(m+n_k))$. As both are open sets and one is contained in the other, they must coincide.
Hence $U_{S_k}(\Delta(m+n_k))=U_{S_k}(\Delta(m))+h\subseteq\inn(W)+h=\inn(W)$, so that $m+n_k\in\cF_{\cA_{S_k}}$. This proves that $\cF_{\cA_{S_k}}+n_k\subseteq\cF_{\cA_{S_k}}$. As $\cA_{S_k}$ is a finite set, this implies $\cF_{\cA_{S_k}}+n_k=\cF_{\cA_{S_k}}$.

Conversely, assume that \eqref{eq:W_k-Delta_n_k}
 holds, and let $U_{S_k}(\Delta(m))\subseteq\inn(W)$.
Recall that this implies  $U_{S_k}(\Delta(m))\subseteq W_k$, i.e. $m\in\cF_{\cA_{S_k}}$.
Hence, by assumption, also $m+n_k\in\cF_{\cA_{S_k}}$, so that $U_{S_k}(\Delta(m+n_k))\subseteq W_k\subseteq\inn(W)$.
Let $g\in U_{S_k}(\Delta(m))$. Then $g_b=m\mod b$ for all $b\in S_k$, so that
$(g+h)_b=m+n_k\mod b$ for all $b\in S_k$, i.e. $g+h\in U_{S_k}(\Delta(m+n_k))$.
Hence $U_{S_k}(\Delta(m))+h\subseteq U_{S_k}(\Delta(m+n_k))\subseteq\inn(W)$.
As this argument applies to all $k$ and all $U_{S_k}(\Delta(m))\subseteq\inn(W)$,
it proves that $\inn(W)+h\subseteq\inn(W)$.
The same Haar measure argument as before, applied to the open set $\inn(W)$, shows that $\inn(W)+h=\inn(W)$, i.e. $h\in H_{\inn(W)}$.

Condition \eqref{eq:W_k-Delta_n_k} is equivalent to
\begin{equation}
\forall k\in\N:\ c_k=\lcm(\prim\cA_{S_k})\mid n_k\ .
\end{equation}
Invoking Lemma~\ref{lemma:equivalent}, we conclude
\begin{equation}
h\in H_{\inn(W)}
\quad\Leftrightarrow\quad
\forall k\in\N:\ d_{k}\mid n_{k}\;\text{ and }\; s_k\mid n_{k+1}-n_k\ .
\end{equation}
This proves the claimed equivalence.

Now we describe all sequences $(n_k)_{k\in\N}$ which satisfy\eqref{eq:n_k} and $n_k\in\{0,\dots,s_k-1\}$ for all $k$. Denote $q_k:=s_k/d_k$.
\begin{compactenum}[aaaaaa]
\item[$n_1$:] Let $n_1=m_1d_1$ for any $m_1\in\{0,\dots,q_1-1\}$.
\item[$n_2$:] $n_2$ must be chosen such that $n_2=0\mod d_2$  and $n_2=n_1\mod s_1$.
As $\gcd(s_1,d_2)=d_1\mid n_1$ in view of \eqref{nowy_2G}, the CRT guarantees the existence of at least one solution $n_2$, and if $n_2$ is one particular solution, then the set of all solutions is precisely $n_2+\lcm(s_1,d_2)\cdot \Z$. As $n_2$ is to be chosen in $\{0,\dots,s_2-1\}$, there are exactly $s_2/\lcm(s_1,d_2)$ possible choices for $n_2$.
\item[$\vdots$\quad]
\item[$n_{k+1}$:]
$n_{k+1}$ must be chosen such that $n_{k+1}=0\mod d_{k+1}$  and $n_{k+1}=n_k\mod s_k$.
As $\gcd(s_k,d_{k+1})=d_k\mid n_k$ in view of \eqref{nowy_2G}, the CRT guarantees the existence of at least one solution $n_{k+1}$, and if $n_{k+1}$ is one particular solution, then the set of all solutions is precisely $n_{k+1}+\lcm(s_k,d_{k+1})\cdot \Z$. As $n_{k+1}$ is to be chosen in $\{0,\dots,s_{k+1}-1\}$, there are exactly $s_{k+1}/\lcm(s_k,d_{k+1})$ possible choices for $n_{k+1}$.
\end{compactenum}

b) $H_{\inn(W)}=\{0\}$ $\Leftrightarrow$ there is unique choice of the numbers $n_k$ described in a) $\Leftrightarrow$ $s_1/d_1=1$ and $s_{k+1}/\lcm(s_k,d_{k+1})=1$ for any $k$ $\Leftrightarrow$   $d_{k}=s_{k}$ for any $k$, the last equivalence by Remark \ref{remark:factors}.
\end{proof}

\subsection{Proof of Theorem~\ref{theo:MEF}}

\begin{remark}\label{limits}
If $(S_k)_k$ is a filtration of $\cB$ by finite sets and if $h=(h_b)_{b\in\cB}\in H$, then we write $
\lim_k\Delta(n_{S_k})=h$, whenever $n_{S_k}\in\Z$ are numbers such that for every $k\in \N$:
$$
h_b= n_{S_k}\mod b\;\text{\rm for all}\;b\in S_k
$$
Let us denote  $s_k=\lcm(S_k)$. There is an inverse system of groups
\begin{equation*}
\ldots \Z/s_{k+1}\Z\rightarrow \Z/s_{k}\Z \rightarrow \ldots\rightarrow \Z/s_1\Z
\end{equation*}
The homomorphisms are the canonical projections. Observe that $s_k|n_{S_{k+1}}-n_{S_k}$ for any $k$ and the sequence $(n_{S_k}+s_k\Z)_k$ is an element of the inverse limit
$\lim\limits_{\leftarrow}\Z/{s_k}\Z$. In this way we obtain an isomorphism of topological groups
\begin{equation}\label{sigma}
\sigma:\lim\limits_{\leftarrow}\Z/{s_k}\Z\cong H
\end{equation}
given by $(n_{S_k}+s_k\Z)_k\mapsto \lim_k\Delta(n_{S_k})$. Compare Remark 2.32 \cite{BKKL2015}. In particular, the inverse limit does not depend on the filtration $(S_k)_k$.\footnote{The last statement follows from a general property of inverse limits: the inverse limits of cofinal inverse systems are isomorphic, \cite[Chapter II, Section 12]{Fuchs}.}
\end{remark}

\begin{proof}[Proof of Proposition~\ref{prop:period}]
Let $\beta_k:\Z/s_k\Z\rightarrow \Z/d_k\Z$ be the map given by $n+s_{k}\Z\mapsto n+d_{k}\Z$, let $M_k$ be the kernel of $\beta_k$ and let $\alpha_k:M_k\rightarrow \Z/s_k\Z$ be the canonical embedding.
There is a commutative diagram of abelian groups
$$
\begin{array}{ccccccccc}
&&0&&0&&&&0\\
&&\mapdown{}&&\mapdown{}&&&&\mapdown{}\vspace{1ex}\\
\ldots&\mapr{f'_{k+1}}{}&M_{k}&\mapr{f'_{k}}{}&M_{k-1}&\mapr{f'_{k-1}}{}&\ldots&\mapr{f'_{2}}{}&M_{1}\vspace{1ex}\\
&&\mapdown{\alpha_{k}}&&\mapdown{\alpha_{k-1}}&&&&\mapdown{\alpha_{1}}\vspace{1ex}\\
\ldots&\mapr{f_{k+1}}{}&\Z/s_k\Z&\mapr{f_{k}}{}&\Z/s_{k-1}\Z&\mapr{f'_{k-1}}{}&\ldots&\mapr{f'_{2}}{}&\Z/s_{1}\Z\vspace{1ex}\\
&&\mapdown{\beta_{k}}&&\mapdown{\beta_{k-1}}&&&&\mapdown{\beta_{1}}\vspace{1ex}\\
\ldots&\mapr{f''_{k+1}}{}&\Z/d_k\Z&\mapr{f''_{k}}{}&\Z/d_{k-1}\Z&\mapr{f''_{k-1}}{}&\ldots&\mapr{f''_{2}}{}&\Z/d_{1}\Z\vspace{1ex}\\
&&\mapdown{}&&\mapdown{}&&&&\mapdown{}\vspace{1ex}\\
&&0&&0&&&&0
\end{array}
$$
where $f_k(n+s_k\Z)=n+s_{k-1}\Z$, $f'_k$ is the restriction of $f_k$ to $M_k$ and $f''_k(n+d_k\Z)=n+d_{k-1}\Z$.

The columns of the diagram are exact sequences of groups, in other words, the diagram can be interpreted as an exact sequence of inverse systems of abelian groups.

Since inverse limit is a left exact functor, see \cite[Chapter II, Theorem 12.3]{Fuchs}, we obtain an exact sequence
\begin{equation}\label{alg_2}
0\rightarrow \lim\limits_{\leftarrow}M_k\mapr{\alpha}{} \lim\limits_{\leftarrow}\Z/s_k\Z\mapr{\beta}{} \lim\limits_{\leftarrow}\Z/{d_k}\Z
\end{equation}

 The condition \eqref{nowy_2G} yields that the homomorphism $\gamma$ in (\ref{alg_2}) is  surjective, thus we have an exact sequence
\begin{equation}\label{alg_3}
0\rightarrow \lim\limits_{\leftarrow}M_k\mapr{\alpha}{} \lim\limits_{\leftarrow}\Z/{s_k}\Z\mapr{\beta}{} \lim\limits_{\leftarrow}\Z/{d_k}\Z\rightarrow 0
\end{equation}
Indeed, let $(n_k+d_k\Z)_k\in \lim\limits_{\leftarrow}\Z/{d_k}\Z$. By induction we construct the numbers $m_1,m_2,\ldots$ such that $d_k|m_k-n_k$ and $s_k|m_{k+1}-m_k$, for any $k$. Then $\beta((m_k+s_k\Z)_k)=(n_k+d_k\Z)_k$. We set $m_1=n_1$. Assume that $m_1,\ldots m_k$ have been defined. Since $d_k\mid n_{k+1}-n_k$,  $d_k\mid m_{k}-n_k$ and $\gcd(d_{k+1},s_k)=d_k$, there exists integers $x,y$ such that $xd_{k+1}+ys_k=m_k-n_{k+1}$. We set $m_{k+1}=m_k-ys_k$.

There are group isomorphisms $g_k:\Z/{\frac{s_{k}}{d_{k}}}\Z \rightarrow M_k$ given by $g_k(n+\frac{s_{k}}{d_{k}}\Z)=d_kn+s_k\Z$ and making the following diagram commutative
$$
\begin{array}{ccccccccc}
\ldots&\rightarrow &\Z/{ \frac{s_{k+1}}{d_{k+1}}}\Z&\rightarrow& \Z/{\frac{s_{k}}{d_{k}}}\Z& \rightarrow&\ldots &\rightarrow& \Z/{\frac{s_{1}}{d_{1}}}\Z\vspace{1ex}\\
&&\mapdown{g_{k+1}}&&\mapdown{g_k}&&&&\mapdown{g_1}\vspace{1ex}\\
\ldots&\mapr{}{}&M_{k+1}&\mapr{f'_{k+1}}{}&M_{k}&\mapr{f'_{k}}{}&\ldots&\mapr{f'_{2}}{}&M_{1}\vspace{1ex}\\
\end{array}
$$
(the arrows in the upper row represent the canonical projections). It follows that there is an isomorphism
\begin{equation}\label{alg_4}
\lim\limits_{\leftarrow}M_k\cong \lim\limits_{\leftarrow}\Z/{\frac{s_{k}}{d_{k}}}\Z
\end{equation}

By Proposition~\ref{prop:periods} a) it follows that  $\lim\limits_{\leftarrow}M_k$ is isomorphic to $H_{\inn(W)}$. There is an isomorphism given by $\sigma\alpha$, where $\sigma$ is the isomorphism defined in Remark~\ref{limits}.

Now a), b) and c) follow from (\ref{alg_3}), (\ref{alg_4}) and Remark~\ref{limits}. In order to prove d) it is enough to note that $s_k=d_k$ if and only if $s_k\mid c_{k+j}$ for some $j\ge 0$.
\end{proof}

\begin{proof}[Proof of Theorem~\ref{theo:MEF}]
This is an immediate corollary to Proposition~\ref{prop:period}.
\end{proof}

\subsection{Examples}\label{subsec:examples}
\begin{remark}\label{remark:limits}
Given a prime number $p$ and $m\in\Z$ we denote by  $v_p(m)$ be the $p$-valuation of $m$, that is, if $m\neq 0$ then $v_p(m)$ is the maximal integer such that $p^{v_p(m)}\mid m$ and  $v_p(0)=+\infty$. Assume that $t=(t_k)$ is a sequence of natural numbers such that $t_k\mid t_{k+1}$ for any $k$. Set $v_p(t)=\sup_kv_p(t_k)$. The  sequence $t$ yields an inverse system of abelian groups
$$
\ldots\rightarrow Z/t_{k+1}\Z\rightarrow Z/t_{k}\Z \rightarrow\ldots \rightarrow Z/t_{1}\Z
$$
where the arrows represent the canonical projections $n+t_{k+1}\Z\mapsto n+t_{k}\Z$.
The inverse limit $\lim\limits_{\leftarrow}\Z/t_{k}\Z$ of this system is isomorphic to the group
$$
\prod\limits_{p\in\cP}G_p
$$
where $G_p=\Z/p^{v_p(t)}\Z$ if $v_p(t)<+\infty$ and  $G_p=\widehat{\Z}_p$ (the group of $p$-adic numbers) otherwise, i.e. when $\lim_kv_p(t_k)=+\infty$.
\end{remark}

Recall from \eqref{def:Cinf} that
\begin{equation}
\Ainf=\{c\in\N: \forall_{S\subset\cB}\ \exists_{S': S\subseteq S'}: c\in\cA_{S'}\setminus S'\}.
\end{equation}
Our first exaxmple has a finite, non-trivial maximal equicontinuous factor and a finite set $\Ainf$.
\begin{example}
  $\cB=\{36\}\cup \{2p_1,2p_2,\ldots\}\cup \{3q_1,3q_2,\ldots\}$, where $p_1,q_1,p_2,q_2,\ldots$ are pairwise different primes.
Let $S_k=\{36,2p_1,\dots,2p_k,3q_1,\dots,3q_k\}$. Then
\begin{equation*}
s_k=36p_1\cdots p_kq_1\cdots q_k,\; \cA_{S_k}=\{2,3\},\;c_k=d_k=6\ ,
\end{equation*}
so that
\begin{equation*}
\frac{s_k}{d_k}=6p_1\cdots p_kq_1\cdots q_k\ .
\end{equation*}
In particular, the maximal equicontinuous factor of $X_\eta$ is the translation by $1$ on $\Z/6\Z$. Moreover, $\Ainf=\{2,3\}$, so that $\emptyset\neq\overline{\inn(W)}\neq W$
by Theorems~\ref{theo:minimality} and~\ref{theo:proximal}.
\end{example}
Our next example has an infinite maximal equicontinuous factor different from $H$ and an infinite set $\Ainf$.
\begin{example}
Let $p_1,q_1,p_2,q_2,\ldots$ be pairwise different primes. Let
$$
\cB=\cB_1\cup\cB_2\cup\cB_3\ldots
$$
where
$$
\begin{array}{l}
\cB_1=\{p_1q_1\}\\
\cB_2=\{p_1p_2^2, p_1q_2^2, q_1q_2^2\}\\
\cB_3=\{p_1p_2p_3^2,p_1p_2q_3^2,p_1q_2q_3^2, q_1q_3^2\}\\
\cB_4=\{p_1p_2p_3p_4^2,p_1p_2p_3q_4^2, p_1p_2q_3q_4^2, p_1q_2q_4^2, q_1q_4^2\}\\
\cB_5=\{p_1p_2p_3p_4p_5^2,p_1p_2p_3p_4q_5^2,p_1p_2p_3q_4q_5^2,p_1p_2q_3q_5^2, p_1q_2q_5^2, q_1q_5^2\}\\
\ldots
\end{array}$$
That is,
$$
\cB_{k+1}=\{p_1\ldots p_k p_{k+1}^2,\;p_1\ldots p_k q_{k+1}^2,\;p_1\ldots p_{k-1} q_k q_{k+1}^2\}\cup\{\frac{b q_{k+1}^2}{q_k^2}:b\in\cB_k\setminus\{p_1\ldots p_{k-1}p_{k}^2,p_1\ldots p_{k-1 }q_{k}^2\}\}
$$
for $k\ge 2$.

Let $S_k=\cB_1\cup\ldots\cup\cB_k$. Then $s_k=\lcm(S_k)=p_1p_2^2\ldots p_k^2q_1q_2^2\ldots q_k^2$ and
$$
\cA_{S_k}=S_k\cup\{p_1\ldots p_k,\;p_1\ldots p_{k-1}q_k,\; p_1\ldots p_{k-2}q_{k-1},\;\ldots, p_1q_2,q_1\}\ ,
$$
so that
\begin{equation*}
\prim\cA_{S_k}
=
\{p_1\ldots p_k,\;p_1\ldots p_{k-1}q_k,\; p_1\ldots p_{k-2}q_{k-1},\;\ldots, p_1q_2,q_1\}\ .
\end{equation*}
Hence
\begin{equation*}
c_k=p_1\cdots p_k q_1\cdots q_k\quad\text{and}\quad d_k=\gcd(s_k,c_{k+j})=c_k\ ,
\end{equation*}
so that
\begin{equation*}
\frac{s_k}{d_k}=p_2\cdots p_kq_2\cdots q_k\ .
\end{equation*}
Hence $H_{\inn(W)}\cong\prod_{i=2}^{+\infty}\Z/p_iq_i\Z$ and $H/H_{\inn(W)}\cong\prod_{i=1}^{+\infty}\Z/p_iq_i\Z$ are infinite compact groups.
Moreover,
$$\Ainf=\limsup_{k\to\infty}\cA_{S_k}\setminus S_k=\{q_1,p_1q_2,p_1p_2q_3,p_1p_2p_3q_4,\ldots\}$$ is infinite and does not contain the number $1$, thus $\emptyset\neq\overline{\inn(W)}\neq W$ by Theorems~\ref{theo:minimality} and~\ref{theo:proximal}.
\end{example}
We end with a non-trivial example where the maximal equicontinuous factor equals $H$ and $\Ainf$ is an infinite set.
\begin{example}
Let $q, p_1,p_2,\ldots$ be pairwise different odd primes. Let
$$
\cB=\cB_1\cup\cB_2\cup\cB_3\ldots
$$
where
$$
\begin{array}{l}
\cB_1=\{p_1q\}\\
\cB_2=\{p_2q,p_1p_2\}\\
\cB_3=\{p_3q,p_1p_3, p_2p_3\}\\
\cB_4=\{p_4q,p_1p_4,p_2p_4,p_3p_4\}\\
\ldots
\end{array}$$
That is,
$$
\cB_{k}=\{p_kq,p_1p_k,\ldots,p_{k-1}p_k\}
$$
for $k\ge 1$.
Let $S_k=\cB_1\cup\ldots\cup\cB_k$. Then $s_k=\lcm(S_k)=qp_1\ldots p_k$ and
$$
\cA_{S_k}=S_k\cup\{p_1,\ldots, p_k\}\cup \{q\}
$$
hence $c_{\cA_{S_k}}=qp_1\ldots p_k=\lcm(S_k)$,
so that $s_k=c_k=d_k$ for all $k$. In particular
$\inn(W)$ is aperiodic by Proposition~\ref{prop:periods}.
Moreover, $$\Ainf=\limsup_{k\to\infty}\left(\cA_{S_k}\setminus S_k\right)=\{q,p_1,p_2,\ldots\}$$ is infinite and does not contain the number $1$, thus $\emptyset\neq\overline{\inn(W)}\neq W$
by Theorems~\ref{theo:minimality} and~\ref{theo:proximal}.
\end{example}

\end{document}